\numberwithin{equation}{section}
\newtheorem{theorem}{Theorem}[section]
\newtheorem{prop}[theorem]{Proposition}
\newtheorem{lemma}[theorem]{Lemma}
\newtheorem{cor}[theorem]{Corollary}
\theoremstyle{definition}
\newtheorem{definition}[theorem]{Definition}
\newtheorem{remark}[theorem]{Remark}
\let \o=\omega
\begin{document}

\title[The modulus of $p$-variation and its applications]{The modulus of $p$-variation and its applications}

\author[G. H. Esslamzadeh, M. M. Goodarzi, M. Hormozi, M. Lind]{G. H. Esslamzadeh, M. M. Goodarzi, M. Hormozi, M. Lind}

\address[G. H. Esslamzadeh]{Department of Mathematics, Faculty of Sciences, Shiraz University, Shiraz 71454, Iran}
\email{esslamz@shirazu.ac.ir}
\address[M. M. Goodarzi]{Department of Mathematics, Faculty of Sciences, Shiraz University, Shiraz 71454, Iran}
\email{milad.moazami@gmail.com}

\address[M. Hormozi]{School of Mathematics, Institute for Research in Fundamental
Sciences (IPM), P. O. Box 19395-5746, Tehran, Iran
\&
Department of Mathematics, Institute for Advanced studies in Basic Sciences (IASBS), Zanjan, Iran  
}
\email{me.hormozi@gmail.com}
\address[M. Lind]{Department of Mathematics, Karlstad University, Universitetsgatan 2, 651 88 Karlstad, Sweden}
\email{martin.lind@kau.se}


\keywords{Fourier series; Modulus of $p$-variation; $K$-functional; Embedding}
\subjclass[2010]{46B70; 46E35; 42A20}

\begin{abstract}
In this note, we introduce the notion of modulus of $p$-variation for a function of a real variable, and show that it serves in at least two important problems, namely, the uniform convergence of Fourier series and computation of certain $K$-functionals. To be more specific, let $\nu$ be a nondecreasing concave sequence of positive real numbers and $1\leq p<\infty$. Using our new tool, we first define a Banach space, denoted $V_p[\nu]$, that is intermediate between the Wiener class $BV_p$ and $L^\infty$, and prove that it satisfies a Helly-type selection principle. We also prove that the Peetre $K$-functional for the couple $(L^\infty,BV_p)$ can be expressed in terms of the modulus of $p$-variation. Next, we obtain equivalent sharp conditions for the uniform convergence of the Fourier series of all functions in each of the classes $V_p[\nu]$ and $H^\omega\cap V_p[\nu]$, where $\omega$ is a modulus of continuity and $H^\omega$ denotes its associated Lipschitz class. Finally, we establish optimal embeddings into $V_p[\nu]$ of various spaces of functions of generalized bounded variation. As a by-product of these latter results, we infer embedding results for certain symmetric sequence spaces.
\end{abstract}

\maketitle
\[\]

\section{Introduction}
The $n$th order variation of a function $f$ on an interval $[a,b]$ was originally defined by Lagrange \cite{Lag} to be $\upsilon(n,f):=\sup \sum_{j=1}^n |f(I_j)|$, where the supremum is taken over all finite collections $\{I_j\}_{j=1}^n$ of nonoverlapping intervals in $[a,b]$, $f(I_j)=f(\sup I_j)-f(\inf I_j)$ and $n$ is a positive integer. One attractive feature of $\upsilon(n,f)$ is that it is finite for each $n$ and any bounded function $f$. Thus, even though $f$ might \emph{not} be of bounded (Jordan) variation, one can still glean some insight from the growth rate of $\upsilon(n,f)$ with respect to $n$. Chanturiya \cite{Cha1} observed that this is indeed a nondecreasing and concave sequence of positive numbers, and coined the term \emph{modulus of variation} to refer to a generic sequence $\nu$ with such properties. He made extensive use of this notion in connection with the theory of Fourier series (see, e.g., \cite{Cha1,Cha3,Cha2,Cha4}). In particular, he obtained, among many other results, an estimate of the (uniform) distance between $f$ and the partial sums of its Fourier series in terms of moduli of variation and continuity. As a consequence, Chanturiya showed that estimates of Lebesgue and Oskolkov as well as Dini's criterion can be deduced from his more general estimate. The modulus of variation has found applications in other areas as well (see, e.g., \cite{Pych,CobosKruglyak,Chist2}). The reader is referred to \cite{Appell} and \cite{GNW} for more information on moduli of variation.

In this paper, we introduce the {\it modulus of $p$-variation of a function $f$} by considering $\ell_p$-norms of sequences of differences $\{f(I_j)\}_{j=1}^n$ as follows:
$$
\upsilon_p(n,f):=\sup \Big(\sum_{j=1}^n |f(I_j)|^p\Big)^{\frac1{p}}, \ \ \ 1\leq p<\infty.
$$
This is a measure of regularity of a function that will provide us with a natural tool to deal with rates of convergence, approximation of functions and embedding theorems. Note that for $p=1$, one gets Chanturiya's modulus of variation, so we deal with the case $p>1$.

Our motivation is twofold. The Harmonic-analytic motivation is to utilize moduli of $p$-variation in defining a new class of Banach spaces with nice ``compactness properties'', that encompasses the Wiener classes $BV_p$ as well as the Chanturiya classes $V[\nu]$ (see below), and to study certain aspects of them pertaining to the theory of Fourier series. We are here mainly concerned with the problem of uniform convergence for Fourier series.

The second motivation stems from an important problem in real interpolation theory, namely, characterization of Peetre's $K$-functionals (see next section for definition). These have proven to be very helpful in the study of interpolation spaces between two Banach spaces and interpolation of operators \cite{deVoreLorentz}. Computation of $K$-functionals is in general a difficult task and each new case provides some nontrivial information. For various couples of Banach spaces this has been done (see, e.g., \cite{deVoreLorentz,Pisier}). Our investigation reveals that the $K$-functional for the couple $(L^\infty,BV_p)$ is naturally linked to the modulus of $p$-variation.

We point out that our variational concept (i.e. the modulus of $p$-variation) specializes to functions defined on the unit circle. In the multivariate case, proposing Wiener-type $p$-variations is much less clear than in the univariate case. In particular, for $p=1$ there are several non-equivalent definitions of the notion of total variation, each of which being suitable for a specific purpose. In the very interesting recent works \cite{Brudnyi1,Brudnyi2}, the authors introduce a notion of $p$-variation that in a sense unifies the previous different theories. It might be worthwhile to try to define the modulus of $p$-variation for multivariate functions starting from the $p$-variation introduced in \cite{Brudnyi1}.

Before outlining the content of the paper, let us introduce the class $V_p[\nu]$ consisting of all bounded $2\pi$-periodic functions $f$ such that $\upsilon_p(n,f)\lesssim\nu(n)$, where $\nu$ is a modulus of variation and $1\leq p<\infty$ (by $A \lesssim B$ we mean $A\leq c B$, where $c$ is a positive constant). Notice that when $p=1$, we denote $V_p[\nu]$ simply by $V[\nu]$, which was first introduced and studied by Chanturiya (see, e.g., \cite{Cha1,Cha2}). If instead one takes $\nu(n)=1$, the Wiener class $BV_p$ \cite{Wi} is obtained.

In Section \ref{sec2} we recall some preliminary definitions and background material. In Section \ref{sec3} we first observe that $V_p[\nu]$ is a Banach space with respect to a suitable norm such that $BV_p\hookrightarrow V_p[\nu]\hookrightarrow L^\infty$, and then establish a Helly type theorem for this space (Theorem \ref{Helly}). As a consequence, a characterization of continuous functions in $V_p[\nu]$ in terms of their Fej\'{e}r means is obtained. In Section \ref{sec4} we prove that the Peetre $K$-functional for the couple $(L^\infty,BV_p)$ can be expressed in terms of the modulus of $p$-variation (Theorem \ref{KfunctTeo1}). Section \ref{sec5} is devoted to the statement and proof of Theorem \ref{convergence}, which presents several equivalent conditions for the uniform convergence of the Fourier series of all functions in the class $H^\omega \cap V_p[\nu]$, where $H^\omega$ is the Lipschitz class associated to a modulus of continuity $\omega$ (see the next section for precise definitions). In Section \ref{sec7} we establish optimal embeddings into $V_p[\nu]$ of several spaces of functions of generalized bounded variation (Theorem \ref{embedding}). The last section consists of two parts: in the first part we indicate how to apply Section \ref{sec7} to deduce embedding results for certain symmetric sequence spaces, and in the second part we prove equivalent sharp conditions for the uniform convergence of the Fourier series of all functions in $V_p[\nu]$. The Fourier coefficients in $V_p[\nu]$ are also estimated.

Throughout, we write $A\approx B$ to imply $A\lesssim B\lesssim A$. For a sequence $\{\gamma_k\}$ of real numbers we use Landau's notation $\gamma_k=o(k)$ to denote the condition $\lim\limits_{k\to \infty} \frac{\gamma_k}{k}=0$. The symbol $\gamma_k\downarrow$ ($\gamma_k\uparrow$) means that $\gamma_k$ is nonincreasing (nondecreasing). Also we define $\Delta(\gamma_k):=\gamma_k-\gamma_{k+1}$.

\section{Background and preliminaries}\label{sec2}

This section is devoted to some preliminary notions and definitions that are used in the sequel.

\subsection{Moduli of continuity and generalized variation}

By a {\it modulus of continuity} we mean a continuous, subadditive and nondecreasing function $\omega$ on the nonnegative real numbers such that $\omega(0)=0$. In particular,
$$
\omega(f, \delta):=\sup_{0\leq h\leq \delta} \sup_{x\in[0,2\pi]}|f(x+h)-f(x)|, \ \ \ \delta\geq0,
$$
is called the {\it modulus of continuity of the function f}. The symbol $H^\omega$ stands for the Lipschitz class of all $2\pi$-periodic functions for which $\omega(f,\delta)\lesssim \omega(\delta)$ as $\delta\rightarrow0^+$.

The {\it $L_p$-modulus of continuity of $f$} is defined as
$$
\omega_p(f,\delta):=
\sup_{0\leq h \leq\delta}\left(\int_0^{2\pi} |f(x+h)-f(x)|^pdx\right)^\frac1{p}, ~~~~~\qquad 1 \leq p<\infty.
$$
These moduli of continuity are well studied compared to moduli of variation. It is worth mentioning that $\upsilon_p(n,f)$ may be viewed as a ``variational'' counterpart of $\omega_p(f;\delta)$, and as such, $V_p[\nu]$ can be considered as an analogue of the generalized Lipschitz class $H_p^\omega$, consisting of all $2\pi$-periodic functions $f$ with $\omega_p(f,\delta)\lesssim \omega(\delta)$ as $\delta\rightarrow0^+$. This is another motivation to propound $V_p[\nu]$.

Let $\Phi=\{\phi_j\}_{j=1}^\infty$ be a sequence of increasing convex functions on the nonnegative reals such that $\phi_j(0)=0$ for all $j$.
We say that $\Phi$ is a $\Phi$-{\it sequence} if  for all $j$ and all $x>0$ we have $0<\phi_{j+1}(x)\leq\phi_j(x)$ and $\sum_{j=1}^\infty\phi_j(x)=\infty$. We denote the sequence of partial sums of the series $\sum_{j=1}^\infty\phi_j$ with $\{\Phi_n\}$. 
A real function $f$ on $[a,b]$ is said to be of {\it $\Phi$-bounded variation} if
$$
\text{Var}_\Phi(f)=\text{Var}_\Phi(f;[a,b])=\sup\sum_{j=1}^n\phi_j(|f(I_j)|)<\infty,
$$
where the supremum is taken over all finite collections $\{I_j\}_{j=1}^n$ of nonoverlapping subintervals of $[a,b]$.

We denote by $\Phi\text{BV}$ the linear space of all functions $f$ such that $cf$ is of $\Phi$-bounded variation for some $c>0$. This notion was originally introduced and studied in \cite{SW2}. Indeed, $\Phi\text{BV}$ with a suitable norm turns into a Banach space, in which the Helly selection principle holds.

\begin{remark}\label{rem1}
Let $\phi$ be an {\it Orlicz function}, that is, a continuous convex function on the nonnegative reals such that $\phi(x)>0$ for $x>0$, 
$\lim\limits_{x\rightarrow0^+}\frac{\phi(x)}{x}=0$ and
$\lim\limits_{x\rightarrow+\infty}\frac{\phi(x)}{x}=+\infty$. Then, by taking $\phi_j(x):=\phi(x)$ for all $j$ in the preceding definition, we get the Salem class $V_\phi$. If further we take $\phi(x)=x^p$ ($p>1$), the well-known Wiener class $BV_p$ is obtained. On the other hand, if $\phi$ is an Orlicz function, and if $\Lambda=\{\lambda_j\}_{j=1}^\infty$ is a $\Lambda$-{\it sequence} \cite{Wt1}, that is, a nondecreasing sequence of positive numbers such that $\sum_{j=1}^\infty\frac1{\lambda_j}=\infty$, by taking $\phi_j(x)=\frac{\phi(x)}{\lambda_j}$ for all $j$, we get the class $\phi\Lambda\text{BV}$ of functions of $\phi\Lambda$-bounded variation. If, further, we assume that $\phi(x)=x^p$ ($p\geq1$), we get the Waterman-Shiba class $\Lambda\text{BV}^{(p)}$. When $p=1$, we obtain the well-known Waterman class $\Lambda\text{BV}$ of functions of bounded $\Lambda$-variation. (See \cite{Appell} for more information.)
\end{remark}

Recall that a sequence $\{a_k\}$ of positive numbers is said to be {\it quasiconcave} if $\{a_k\}$ is nondecreasing, while $\{a_k/k\}$ is nonincreasing. Recall also that $\{a_k\}$ is said to be {\it concave} if $a_{k+1}+a_{k-1}\leq 2a_k$ for all $k$.

Suppose that $\nu$ is a modulus of variation.\footnote{As a convention we put $\nu(0)=0$.} For a bounded $\nu$, it is easily seen that $V_p[\nu]=BV_p$. Therefore, we assume throughout the paper that $\nu(k)\uparrow\infty$ as $k\rightarrow\infty$.

Let us here introduce a piece of notation that will be used in the sequel. For each positive integer $k$ we define
\begin{equation}\label{xi}
\varepsilon_p(k):=\big(\nu(k)^p-\nu(k-1)^p\big)^{\frac1{p}}.
\end{equation}
This sequence plays a significant role in our convergence results (Theorems \ref{convergence} and \ref{Unif2}), so in the following lemma we record some of its properties for future reference.

\begin{lemma}\label{epsionprop}
Let $\nu$ be a modulus of variation, $\varepsilon_p(k)$ be defined as above, and $1\leq p<\infty$. Then the following facts hold.

\medskip
\emph{(i)} $\frac{\nu(k)}{k^{\frac1{p}}}$ is nonincreasing if and only if $\nu(k)^p$ is quasiconcave if and only if $\varepsilon_p(k)\leq \frac{\nu(k)}{k^{\frac1{p}}}$;

\medskip
\emph{(ii)} $\varepsilon_p(k)$ is nonincreasing if and only if $\nu(k)^p$ is concave; in this case, $\frac{\nu(k)}{k^{\frac1{p}}}\rightarrow0$ implies that $\frac{\nu(k)}{k^{\frac1{p}}}$ is eventually decreasing;\footnote{By ``decreasing" we mean what is sometimes called ``strictly decreasing".} and

\medskip
\emph{(iii)} $\nu(k)-\nu(k-1)\lesssim \varepsilon_p(k)k^{\frac1{p}-1}$.

\end{lemma}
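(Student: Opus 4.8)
The plan is to reduce everything to elementary statements about the single sequence $a_k:=\nu(k)^p$, for which $\varepsilon_p(k)^p=a_k-a_{k-1}$ and, by the convention $\nu(0)=0$, $a_0=0$. For (i) I would first note that since $\nu$ is nondecreasing, so is $a_k$; hence $\nu(k)^p$ is quasiconcave if and only if $a_k/k$ is nonincreasing, which upon taking $p$-th roots is exactly the statement that $\nu(k)/k^{1/p}$ is nonincreasing. For the third condition, the inequality $\varepsilon_p(k)\le\nu(k)/k^{1/p}$ unwinds (raise to the $p$-th power) to $a_k-a_{k-1}\le a_k/k$, i.e. $(k-1)a_k\le k\,a_{k-1}$, i.e. $a_k/k\le a_{k-1}/(k-1)$; requiring this for all $k$ is literally the monotonicity of $a_k/k$. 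Thus the three conditions coincide, and (i) is pure algebra.

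For the first assertion of (ii), since $t\mapsto t^p$ is increasing on $[0,\infty)$, $\varepsilon_p(k)$ is nonincreasing if and only if $\varepsilon_p(k)^p=a_k-a_{k-1}$ is, and $a_{k+1}-a_k\le a_k-a_{k-1}$ is precisely $a_{k+1}+a_{k-1}\le 2a_k$, i.e. concavity of $\nu(k)^p$. The substantive part is the eventual strict decrease of $r_k:=a_k/k$ under the hypothesis $r_k\to0$. Here I would use $a_0=0$ to write $r_k$ as the average of the increments $d_j:=a_j-a_{j-1}$, which concavity makes nonincreasing; hence $r_k$ is nonincreasing. If it were not eventually strictly decreasing, there would be arbitrarily large $m$ with $r_m=r_{m+1}$; for such an $m$ one computes $d_{m+1}=(m+1)r_{m+1}-m\,r_m=r_m$, and since the $d_j$ are nonincreasing and average to $r_m$ over $1\le j\le m$, each $d_j$ ($j\le m$) must equal $r_m$, in particular $\nu(1)^p=d_1=r_m$. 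Letting $m\to\infty$ along these indices forces $\nu(1)^p=0$, contradicting $\nu(1)>0$. Taking $p$-th roots transfers the conclusion to $\nu(k)/k^{1/p}$.

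For (iii), set $\delta_k:=\nu(k)-\nu(k-1)$ and $s:=\nu(k-1)$. Two ingredients combine: concavity of $\nu$ (its increments are nonincreasing, with $\nu(0)=0$) gives $s=\nu(k-1)\ge(k-1)\delta_k$, since $\nu(k-1)$ is a sum of $k-1$ increments each at least $\delta_k$; and convexity of $t\mapsto t^p$ supplies the tangent-line bound $\varepsilon_p(k)^p=(s+\delta_k)^p-s^p\ge p\,s^{p-1}\delta_k$. Chaining these yields $\varepsilon_p(k)^p\ge p\,(k-1)^{p-1}\delta_k^{\,p}$, whence $\nu(k)-\nu(k-1)=\delta_k\le p^{-1/p}(k-1)^{1/p-1}\varepsilon_p(k)$; absorbing the harmless factor $(k/(k-1))^{1-1/p}\le 2^{1-1/p}$ for $k\ge2$ (and checking the trivial case $k=1$, where both sides equal $\nu(1)$, separately) gives the claimed $\nu(k)-\nu(k-1)\lesssim\varepsilon_p(k)\,k^{1/p-1}$.

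I expect the only genuine obstacle to be the eventual-decrease claim in (ii): the equivalences in (i) and (ii) are formal, and (iii) is a short two-inequality estimate, whereas (ii) requires the averaging/rigidity argument above to rule out the borderline case in which $a_k/k$ stalls at a constant value. The delicate point there is that equality $r_m=r_{m+1}$ at a single index already forces the whole initial segment of increments to be constant, so infinitely many such equalities are incompatible with $r_k\to0$.
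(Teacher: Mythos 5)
Your proof is correct and follows essentially the same route as the paper: part (i) is the same algebraic unwinding, the eventual-decrease claim in (ii) is established by the same averaging/rigidity argument (equality of $a_m/m$ at one index forces all earlier increments to equal $\nu(1)^p$, which is incompatible with $a_k/k\to0$), and (iii) combines the tangent-line bound for $t\mapsto t^p$ with the concavity estimate $k\,(\nu(k)-\nu(k-1))\lesssim\nu(k-1)$ exactly as in the paper. Your write-up is, if anything, slightly more careful about the $k=1$ case and the constant $(k/(k-1))^{1-1/p}$ in (iii).
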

\begin{proof}
The first parts of (i) and (ii) are straightforward. To see the second part of (i), note that for each $k$ we have
\begin{align*}
\frac{\nu(k)}{k^{\frac1{p}}}\leq \frac{\nu(k-1)}{(k-1)^{\frac1{p}}} \ \ \ &\Longleftrightarrow \ \ \ k\nu(k)^p-\nu(k)^p\leq k\nu(k-1)^p\\[.1in]
&\Longleftrightarrow \ \ \ k\big(\nu(k)^p-\nu(k-1)^p\big)\leq \nu(k)^p\\[.1in]
&\Longleftrightarrow \ \ \ \varepsilon_p(k) \leq \frac{\nu(k)}{k^{\frac1{p}}}.
\end{align*}

To see the second part of (ii), we argue along the lines of the proof of \cite[Lemma 1]{Cha4}. Since $\varepsilon_p(k)$ is nonincreasing, for all $n$ we have
$$
n\big(\nu(n)^p-\nu(n-1)^p\big)\leq \sum_{k=1}^n \nu(k)^p-\nu(k-1)^p=\nu(n)^p,
$$
or $\varepsilon_p(n) \leq \frac{\nu(n)}{n^{\frac1{p}}}$. By (i) this means that $\frac{\nu(k)}{k^{\frac1{p}}}$ is nonincreasing. To obtain the desired result, we proceed by contradiction. If from some point on, $\frac{\nu(k)}{k^{\frac1{p}}}$ is not decreasing, there exists a sequence $\{n_k\}_{k=1}^\infty$ such that
$$
\frac{\nu(n_k)}{n_k^{\frac1{p}}}=\frac{\nu(n_k+1)}{(n_k+1)^{\frac1{p}}}, \ \ \ k\geq 1.
$$
But this means
$$
\frac{\nu(n_k)^p}{n_k}=\nu(n_k+1)^p-\nu(n_k)^p, \ \ \ k\geq 1,
$$
or
$$
\frac1{n_k}\sum_{m=1}^{n_k} \nu(m)^p-\nu(m-1)^p=\nu(n_k+1)^p-\nu(n_k)^p.
$$
Again since $\varepsilon_p(k)$ is nonincreasing, the latter equality implies that
$$
\frac{\nu(n)}{n^{\frac1{p}}}=\varepsilon_p(n)=\nu(1), \ \ \ 1\leq n\leq n_k+1.
$$
As $\{n_k\}$ is an infinite sequence, the preceding equality holds for all $n$. This contradicts the assumption that $\frac{\nu(k)}{k^{\frac1{p}}}\rightarrow0$.

To prove (iii), apply the inequality
\begin{equation}
s^p-t^p>pt^{p-1}(s-t), \ \ \ (s>t,\ p>1)
\end{equation}
with $s:=\nu(k)$, $t:=\nu(k-1)$, and use the fact that $k(\nu(k)-\nu(k-1))\lesssim \nu(k-1)$ to obtain
\begin{align*}
\frac{\varepsilon_p(k)}{k} & \gtrsim \big(\nu(k)-\nu(k-1)\big)^{\frac1{p}}\nu(k-1)^{1-\frac1{p}}k^{-1}\\[.1in]
&\gtrsim \big(\nu(k)-\nu(k-1)\big)^{\frac1{p}}\big(\nu(k)-\nu(k-1)\big)^{1-\frac1{p}}k^{1-\frac1{p}}k^{-1}\\[.1in]
&=\big(\nu(k)-\nu(k-1)\big)k^{-\frac{1}{p}}.
\end{align*}

\end{proof}
 
\begin{remark}\label{remark}
As stated in \cite[Theorem 5]{Cha1} (see also \cite[Lemma 3]{Chist}), a function $f$ is regulated if and only if $\lim\limits_{k\rightarrow\infty}\frac{\upsilon(k,f)}{k}=0$.  Note also that by Proposition \ref{coeff} below, the sequence $\left\{\nu(k)/k^{\frac1{p}}\right\}$ determines the convergence rate of the Fourier coefficients of functions in $V_p[\nu]$. With this in mind and to ensure that all functions in $V_p[\nu]$ are regulated, we assume throughout the paper that $\frac{\nu(k)}{k^{\frac1{p}}}\downarrow 0$. By Lemma \ref{epsionprop} this implies that $\nu^p$ is quasiconcave. In Section \ref{sec5} we further assume that $\nu^p$ is concave which is not too severe a restriction in our setting. We also assume that $\lim\limits_{t\rightarrow0^+}\frac{\omega(t)}{t}=+\infty$ to exclude some uninteresting cases. These assumptions play crucial roles in the proofs of Theorems \ref{Helly} and \ref{convergence}.
\end{remark}

\subsection{Peetre's $K$-functionals}
 Let $(X_0,\|\cdot\|_{X_0})$ and $(X_1,\|\cdot\|_{X_1})$ be two Banach spaces such that $X_1\hookrightarrow X_0$. For any $f\in X_0$, the $K$-{\it functional} for $X_0$ and $X_1$ is defined as
\begin{equation}
\nonumber
K(f,t; X_0, X_1)=\inf_{g\in X_1}\left(\|f-g\|_{X_0}+t\|g\|_{X_1}  \right),\ \ \  t\geq 0.
\end{equation}
Intuitively, the quantity $K(f,t; X_0, X_1)$ describes how well a function $f\in X_0$ can be approximated by a $g\in X_1$.

As mentioned in the introduction, the $K$-functionals associated to several couples of Banach spaces have already been characterized. A function $f$ is said to belong to the \emph{Sobolev space} $W^{1,p}$ if its first order weak derivative $f'\in L^p~~(1\le p<\infty)$. One sees immediately that $W^{1,p}\hookrightarrow BV_p$. Further, in \cite{Lind} it was proven that
\begin{equation}
    \label{p-ContModulus}
    K(f,t; BV_p, W^{1,p})\approx \omega_{1-1/p}(f;t^{p'})\quad(1<p<\infty,\quad p'=p/(p-1)),
\end{equation}
where $\o_{1-1/p}(f;\delta)$ is the so-called modulus of $p$-continuity of Terehin, see \cite{Lind} and the references therein. This provides a measure of how well functions in $BV_p$ can be approximated by smooth functions (in the sense of Sobolev). In \cite{CobosKruglyak}, the $K$-functional for the couple $(L^\infty,BV)$ was also characterized in terms of moduli of variation.

\section{The Banach space $V_p[\nu]$ and Helly's theorem}\label{sec3}

Helly's selection theorem states that any uniformly bounded sequence of nondecreasing bounded functions on $[a,b]$ has a pointwise convergent subsequence. In this section we establish a theorem of this type for $V_p[\nu]$ and then employ it in determining when a continuous function $f$ lies in $V_p[\nu]$. To this end, we first show that $V_p[\nu]$ is a linear space and if we define
$$
V_{p,\nu}(f):=\sup_{1\leq n<\infty}\frac{\upsilon_p(n,f)}{\nu(n)},
$$
then the quantity 
$$
\|f\|_{p,\nu}:=V_{p,\nu}(f)+\sup_{x\in[a,b]}|f(x)|<\infty
$$
defines  a norm on this space, with respect to which $V_p[\nu]$ is complete; this is the content of our first result.

\begin{prop}\label{Banach}
Let $\nu$ be a modulus of variation and $1\leq p <\infty$. Then $(V_p[\nu],\|\cdot\|_{p,\nu})$ is a Banach space.
\end{prop}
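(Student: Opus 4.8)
The plan is to verify in turn that $V_p[\nu]$ is a vector space, that $\|\cdot\|_{p,\nu}$ is a norm, and that the space is complete; the last point is where the real work lies.

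First I would record the elementary scaling identity $\upsilon_p(n,\lambda f)=|\lambda|\,\upsilon_p(n,f)$ together with the subadditivity $\upsilon_p(n,f+g)\le \upsilon_p(n,f)+\upsilon_p(n,g)$. The latter is the crucial algebraic fact, and it follows from Minkowski's inequality applied configuration by configuration: for any fixed collection $\{I_j\}_{j=1}^n$ of nonoverlapping intervals one has $(\sum_j|(f+g)(I_j)|^p)^{1/p}\le(\sum_j|f(I_j)|^p)^{1/p}+(\sum_j|g(I_j)|^p)^{1/p}\le\upsilon_p(n,f)+\upsilon_p(n,g)$, and since the right-hand side is independent of the chosen collection, taking the supremum over all such collections preserves the bound. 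Dividing by $\nu(n)$ and taking the supremum over $n$ then yields $V_{p,\nu}(\lambda f)=|\lambda|\,V_{p,\nu}(f)$ and $V_{p,\nu}(f+g)\le V_{p,\nu}(f)+V_{p,\nu}(g)$, so that $V_p[\nu]$ is closed under the vector operations and $V_{p,\nu}$ is a seminorm. Because the sup norm $\|f\|_\infty:=\sup_{x}|f(x)|$ is a genuine norm, the sum $\|f\|_{p,\nu}=V_{p,\nu}(f)+\|f\|_\infty$ is a norm; in particular positive-definiteness comes for free from the $\|\cdot\|_\infty$ term.

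The heart of the matter is completeness. Given a Cauchy sequence $\{f_k\}$ in $\|\cdot\|_{p,\nu}$, I would first observe that $\|f_j-f_k\|_\infty\le\|f_j-f_k\|_{p,\nu}$, so $\{f_k\}$ is uniformly Cauchy and hence converges uniformly to a bounded $2\pi$-periodic function $f$. It remains to show $f\in V_p[\nu]$ and $\|f_k-f\|_{p,\nu}\to 0$. Fix $\varepsilon>0$ and choose $N$ with $V_{p,\nu}(f_j-f_k)\le\varepsilon$ for all $j,k\ge N$; equivalently, $(\sum_{i=1}^n|(f_j-f_k)(I_i)|^p)^{1/p}\le\varepsilon\,\nu(n)$ for every $n$ and every collection $\{I_i\}_{i=1}^n$.

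The main obstacle, and the one step requiring genuine care, is to pass to the limit $k\to\infty$ inside this inequality. The key is to \emph{freeze the configuration before taking the limit}: since the sum is finite and $f_k\to f$ pointwise, each difference $(f_j-f_k)(I_i)$ converges to $(f_j-f)(I_i)$, so the whole finite sum converges and we obtain $(\sum_{i=1}^n|(f_j-f)(I_i)|^p)^{1/p}\le\varepsilon\,\nu(n)$ for this configuration. Only \emph{after} the limit do I take the supremum over all collections and over $n$, which gives $V_{p,\nu}(f_j-f)\le\varepsilon$ for $j\ge N$. This simultaneously shows $f_j-f\in V_p[\nu]$, whence $f=f_j-(f_j-f)\in V_p[\nu]$ by the linear structure already established, and that $V_{p,\nu}(f_j-f)\to 0$. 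Combined with the uniform convergence $\|f_j-f\|_\infty\to 0$, this yields $\|f_j-f\|_{p,\nu}\to 0$ and completes the proof. I expect the only delicate point to be precisely this interchange of limit and supremum, which is resolved by fixing the partition first; everything else is routine.
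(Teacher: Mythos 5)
Your proposal is correct and follows essentially the same route as the paper: Minkowski's inequality configuration-by-configuration for the seminorm properties, uniform Cauchyness from the sup-norm term to produce the limit $f$, and then the key step of fixing a finite collection of intervals, passing to the pointwise limit inside the finite sum, and only afterwards taking the supremum over collections and over $n$. The only cosmetic differences are that you deduce $f\in V_p[\nu]$ from $f_j-(f_j-f)$ rather than from the uniform bound $\|f_n\|_{p,\nu}\le M$, and you get positive-definiteness directly from the $\|\cdot\|_\infty$ term; neither changes the substance.
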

\begin{proof}
Let $\{I_j\}_{j=1}^k$ be a collection of nonoverlapping intervals in $[a,b]$ and $f,g\in V_p[\nu]$. Then by the triangle inequality we get
$$
\frac{\upsilon_p(k,f+g)}{\nu(k)}\leq \frac{\upsilon_p(k,f)}{\nu(k)}+\frac{\upsilon_p(k,g)}{\nu(k)}.
$$
As a result, we obtain $\|f+g\|_{p,\nu}\leq \|f\|_{p,\nu}+\|g\|_{p,\nu}$. Now, suppose $\|f\|_{p,\nu}=0$. Then obviously $f(a)=0$, and $|f(x)|=|f(x)-f(a)|\leq \upsilon_p(1,f)=0$ for every $x\in (a,b]$. As $\|cf\|_{p,\nu}=|c|\|f\|_{p,\nu}$ for every $c\in\mathbb{R}$, we have shown that $V_p[\nu]$ is a normed linear space.

To prove completeness of $V_p[\nu]$ in $\|\cdot\|_{p,\nu}$, let $\{f_n\}$ be a Cauchy sequence in $V_p[\nu]$. Then $\{f_n\}$ is uniformly Cauchy, i.e., for every $\epsilon>0$, there exists an integer $N$ such that for all $m,n\geq N$ and for each $x\in (a,b]$,
\begin{align*}
|f_n(x)-f_m(x)| 
\leq(1+\nu(1))\epsilon.
\end{align*}
This in turn implies the existence of a function $f$ on $[a,b]$ such that $f_n\rightarrow f$ uniformly (and hence pointwise) in $[a,b]$. We are going to show that $f\in V_p[\nu]$ and $\|f_n-f\|_{p,\nu}\rightarrow0$ as $n\rightarrow\infty$. To see this, notice that there exists an $M>0$ such that $\|f_n\|_{p,\nu}\leq M$ for all $n$. So, if $\{I_j\}_{j=1}^k$ is a collection of nonoverlapping intervals in $[a,b]$, then
$$
\Big(\sum_{j=1}^k |f(I_j)|^p\Big)^{\frac1{p}}=\lim_{n\rightarrow\infty}\Big(\sum_{j=1}^k |f_n(I_j)|^p\Big)^{\frac1{p}}\leq M\nu(k),
$$
which means $f\in V_p[\nu]$.

Finally, with $\epsilon$ and $\{I_j\}_{j=1}^k$ as above, we have
\begin{align*}
\Big(\sum_{j=1}^k |(f_n-f_m)(I_j)|^p\Big)^{\frac1{p}}&\leq \upsilon_p(k,f_n-f_m)\\[.1in]
&\leq \nu(k)\|f_n-f_m\|_{p,\nu}\\[.1in]
&<\nu(k)\epsilon.
\end{align*}
Thus,
$$
\Big(\sum_{j=1}^k |(f_n-f)(I_j)|^p\Big)^{\frac1{p}}=\lim_{m\rightarrow\infty}\Big(\sum_{j=1}^k |(f_n-f_m)(I_j)|^p\Big)^{\frac1{p}}\leq \nu(k)\epsilon.
$$
As $k$ was arbitrary, this implies $V_{p,\nu}(f_n-f)\leq\epsilon$. Therefore $\|f_n-f\|_{p,\nu}\rightarrow0$.
\end{proof}

In \cite[Theorem 1]{Chist}, Chistyakov established a far-reaching extension of Helly's theorem mentioned above to metric space valued sequences of functions of a real variable.
Such compactness principles are particularly useful so as to establish existence results (e.g. weak solvability of nonlinear PDE's). In order to prove Theorem \ref{Helly}, we need the following special case of \cite[Theorem 1]{Chist} that suits our purposes. 
\begin{lemma}\label{Chistthm}
Let $\{f_j\}$ be a pointwise bounded sequence on $[a,b]$ such that
\begin{equation*}
\mu(n) := \limsup_{j \to \infty} \upsilon(n,f_j)=o(n). 
\end{equation*}
Then there exists a subsequence of  $\{f_j\}$ that converges pointwise to a function $f$on $[a,b]$.
\end{lemma}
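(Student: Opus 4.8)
The statement is presented as a specialization of Chistyakov's general selection principle \cite[Theorem 1]{Chist}, so the plan is to verify that its hypotheses are met in the scalar setting and then invoke it directly. Chistyakov's theorem concerns sequences of maps $f_j\colon[a,b]\to(X,d)$ into a metric space and yields a pointwise convergent subsequence under two assumptions: (a) the set $\{f_j(x)\}_j$ is relatively compact in $X$ for each fixed $x$, and (b) the metric modulus of variation satisfies $\limsup_{j\to\infty}\nu(n,f_j)=o(n)$, where $\nu(n,f)=\sup\sum_{i=1}^{n} d\big(f(\sup I_i),f(\inf I_i)\big)$ and the supremum runs over all nonoverlapping $I_1,\dots,I_n\subseteq[a,b]$.

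First I would specialize to $X=\R$ with $d(s,t)=|s-t|$. Under this choice the metric modulus of variation $\nu(n,\cdot)$ coincides exactly with our first-order variation $\upsilon(n,\cdot)$, since $d\big(f(\sup I_i),f(\inf I_i)\big)=|f(I_i)|$; hence hypothesis (b) is precisely the assumed condition $\mu(n)=\limsup_{j\to\infty}\upsilon(n,f_j)=o(n)$. For hypothesis (a), pointwise boundedness of $\{f_j\}$ means that for each $x\in[a,b]$ the set $\{f_j(x)\}_j$ is a bounded subset of $\R$; by the Heine--Borel theorem its closure is compact, so $\{f_j(x)\}_j$ is relatively compact. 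With both hypotheses verified, Chistyakov's theorem supplies a subsequence converging pointwise on $[a,b]$ to a function $f$, which is the desired conclusion.

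Should a self-contained argument be preferred, I would proceed in two stages. Fix a countable dense set $D\subseteq[a,b]$; since $\{f_j\}$ is pointwise bounded, a diagonal extraction based on the Bolzano--Weierstrass theorem produces a subsequence converging at every point of $D$. The second, and genuinely harder, stage is to propagate this convergence from $D$ to all of $[a,b]$. Here the role of the variation bound is decisive: the condition $\upsilon(n,f)/n\to0$ forces each limiting function to have only finitely many jumps exceeding a given height (cf. Remark \ref{remark}), and the uniform version $\limsup_{j}\upsilon(n,f_j)=o(n)$ controls the oscillation of the $f_j$ between nearby points of $D$ well enough to show that the extracted subsequence is Cauchy at every $x\in[a,b]$. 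This oscillation-control step is exactly the technical heart of \cite[Theorem 1]{Chist}, and it is the step I would expect to be the main obstacle in a from-scratch treatment; the deduction above sidesteps it by appealing to Chistyakov's result.
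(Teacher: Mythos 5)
Your proposal matches the paper exactly: the paper gives no independent proof, presenting the lemma as the special case $X=\mathbb{R}$ of Chistyakov's selection principle \cite[Theorem 1]{Chist}, and your verification of its two hypotheses (relative compactness of $\{f_j(x)\}_j$ via Heine--Borel, and the identification of the metric modulus of variation with $\upsilon(n,\cdot)$) is precisely what is needed to justify that citation. Your sketch of a self-contained argument correctly locates the technical core in the propagation of convergence from a countable dense set, but the paper, like you, sidesteps this by appealing to Chistyakov.
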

\begin{theorem}\label{Helly}
Let $\{f_n\}$ be a bounded sequence in $V_p[\nu]$ with $\|f_n\|_{p,\nu}\leq M$. Then there exists a subsequence $\{f_{n_j}\}$ of $\{f_n\}$,  converging pointwise to a function $f$ in $V_p[\nu]$ such that $\|f\|_{p,\nu}\leq 2M$.
\end{theorem}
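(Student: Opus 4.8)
The plan is to deduce the statement from the $p=1$ compactness principle recorded in Lemma~\ref{Chistthm}, which is phrased in terms of the ordinary variation $\upsilon(k,\cdot)$. First I would verify the two hypotheses of that lemma for $\{f_n\}$. Pointwise boundedness is immediate, indeed it is uniform: since the norm dominates the sup-norm, $\sup_{x}|f_n(x)|\leq\|f_n\|_{p,\nu}\leq M$ for all $n$. The substantive point is to verify the growth condition $\mu(k):=\limsup_{n\to\infty}\upsilon(k,f_n)=o(k)$, and for this I would pass from the ordinary variation to the $p$-variation by H\"older's inequality: for any nonoverlapping collection $\{I_i\}_{i=1}^k$,
$$
\sum_{i=1}^k|f(I_i)|\leq k^{1-\frac1p}\Big(\sum_{i=1}^k|f(I_i)|^p\Big)^{\frac1p},
$$
so that $\upsilon(k,f)\leq k^{1-\frac1p}\upsilon_p(k,f)$. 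Because $\upsilon_p(k,f_n)\leq V_{p,\nu}(f_n)\,\nu(k)\leq M\nu(k)$ for every $n$, this yields $\mu(k)\leq M\,k^{1-\frac1p}\nu(k)$, whence
$$
\frac{\mu(k)}{k}\leq M\,\frac{\nu(k)}{k^{1/p}}\longrightarrow 0
$$
by the standing assumption $\nu(k)/k^{1/p}\downarrow 0$ from Remark~\ref{remark}. This is exactly the required $\mu(k)=o(k)$.

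Having checked the hypotheses, Lemma~\ref{Chistthm} furnishes a subsequence $\{f_{n_j}\}$ converging pointwise on $[a,b]$ to some function $f$. It then remains to show $f\in V_p[\nu]$ together with the norm estimate, and here I would invoke the lower semicontinuity of the $p$-variation under pointwise limits, just as in the completeness argument of Proposition~\ref{Banach}. For any fixed collection $\{I_i\}_{i=1}^k$,
$$
\Big(\sum_{i=1}^k|f(I_i)|^p\Big)^{\frac1p}=\lim_{j\to\infty}\Big(\sum_{i=1}^k|f_{n_j}(I_i)|^p\Big)^{\frac1p}\leq M\nu(k),
$$
since each approximating sum is bounded by $\upsilon_p(k,f_{n_j})\leq M\nu(k)$. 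Taking the supremum over all such collections gives $V_{p,\nu}(f)\leq M$, so $f\in V_p[\nu]$. Likewise $\sup_x|f(x)|\leq M$ by passing to the pointwise limit, and therefore $\|f\|_{p,\nu}=V_{p,\nu}(f)+\sup_x|f(x)|\leq 2M$, as claimed.

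The main obstacle is the verification of the $o(k)$ growth condition in the first paragraph: everything else is soft (boundedness inherited from the norm, and semicontinuity of the variation). It is precisely the H\"older reduction together with the hypothesis $\nu(k)/k^{1/p}\downarrow 0$ that converts the available $p$-variation bound into the ordinary-variation growth needed to apply Chistyakov's theorem. This is also why the standing assumption on $\nu$ is indispensable here rather than cosmetic; without it one cannot guarantee that the limit function is regulated, and the reduction to Lemma~\ref{Chistthm} would fail. I note finally that the argument is uniform in $p\in[1,\infty)$: for $p=1$ the exponent $k^{1-1/p}$ is trivial and the scheme recovers Chanturiya's original regularity condition $\upsilon(k,f_n)/k\to0$.
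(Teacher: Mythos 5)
Your argument is correct and follows essentially the same route as the paper: Hölder's inequality converts the $p$-variation bound into $\upsilon(k,f_n)/k\leq M\nu(k)/k^{1/p}\to 0$, Chistyakov's selection principle (Lemma~\ref{Chistthm}) yields the pointwise convergent subsequence, and passing to the limit over fixed finite collections gives $V_{p,\nu}(f)\leq M$ and $\sup_x|f(x)|\leq M$. The only cosmetic difference is that the paper phrases the final step via an $\epsilon$-approximation by a single $f_{n_{j_0}}$ rather than a direct limit of the finite sums, which amounts to the same thing.
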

\begin{proof}
First of all, by applying H\"{o}lder's inequality we obtain, for a generic function $g$, that
$$
\Big(\sum_{j=1}^k |g(I_j)|^p\Big)^{\frac1{p}} \leq \sum_{j=1}^k |g(I_j)| \leq \Big(\sum_{j=1}^k |g(I_j)|^p\Big)^{\frac1{p}}k^{1-\frac1{p}}.
$$
This implies
\begin{equation}\label{Hol}
\upsilon_p(n,g) \leq v(n,g) \leq \upsilon_p(n,g)k^{1-\frac1{p}},
\end{equation}
and hence, for each $n$ and each $k$, we get
$$
\frac{\upsilon(k,f_n)}{k}\leq\frac{\upsilon_p(k,f_n)}{k^{\frac1{p}}}\leq M\frac{\nu(k)}{k^{\frac1{p}}},
$$
which in turn yields
$$
\frac{\mu(k)}{k}\leq\frac{\mu_p(k)}{k^{\frac1{p}}}\leq M\frac{\nu(k)}{k^{\frac1{p}}},
$$
where
$$
\mu(k):=\limsup_{n\rightarrow\infty}\upsilon(k,f_n) \ \ \ \ \ \text{and} \ \ \ \ \ \mu_p(k):=\limsup_{n\rightarrow\infty}\upsilon_p(k,f_n).
$$
Also note that every bounded sequence in $V_p[\nu]$ is pointwise bounded. Since by hypothesis $\frac{\nu(k)}{k^{\frac1{p}}}\rightarrow0$, we may apply Lemma \ref{Chistthm} to find a subsequence $\{f_{n_j}\}$ and a function $f$ such that $f_{n_j}\rightarrow f$ pointwise, as $j\rightarrow\infty$.

Let $\epsilon>0$ be given and $m\in\mathbb{N}$. Then from the preceding statement we conclude  that for each collection $\mathcal{I}=\{I_k\}_{k=1}^m$ of nonoverlapping intervals in $[a,b]$, there exists a $j_0=j_0(\mathcal{I})$ such that
$$
\Big|\Big(\sum_{k=1}^m |f(I_k)|^p\Big)^{\frac1{p}}-\Big(\sum_{k=1}^m |f_{n_{j_0}}(I_k)|^p\Big)^{\frac1{p}}\Big|<\epsilon\cdot\nu(m).
$$
As a result, we get
\begin{align*}
\Big(\sum_{k=1}^m |f(I_k)|^p\Big)^{\frac1{p}}&<\epsilon\cdot\nu(m)+\Big(\sum_{k=1}^m |f_{n_{j_0}}(I_k)|^p\Big)^{\frac1{p}}\\[.1in]
&\leq\epsilon\cdot\nu(m)+\upsilon_p(m,f_{n_{j_0}})\\[.1in]
&\leq (\epsilon+M)\nu(m),
\end{align*}
whence $V_{p,\nu}(f)\leq M$. Finally, since $|f_{n_j}(x)|\leq \|f_{n_j}\|_{p,\nu}\leq M$ for all $j$ and all $x\in[a,b]$, and $f_{n_j}(x)\rightarrow f(x)$ as $j\rightarrow\infty$, it follows that $\sup_{x\in[a,b]}|f(x)|\leq M$. Therefore $\|f\|_{p,\nu}\leq 2M$, as desired.
\end{proof}

The preceding theorem enables us to describe elements of $V_p[\nu]$ in terms of Fej\'{e}r means. For each $n$, the  $n$th order {\it Fej\'{e}r kernel} is defined by
$$
K_n(t):=\frac{2}{n+1}\left(\frac{\sin\frac1{2}(n+1)t}{2\sin\frac1{2}t}\right)^2,
$$
and  the $n$th order {\it Fej\'{e}r mean} of a function $f$ is given by 
$$
F_n(x):=\frac1{\pi}\int_{-\pi}^{\pi}f(x+t)K_n(t)dt.
$$

The following proposition is analogous to a result of Zygmund \cite[p. 138]{Zygmund}.

\begin{prop}
Let $f$ be continuous and $2\pi$-periodic. Then $f\in V_p[\nu]$ if and only if the  Fej\'{e}r means of $f$ form a bounded sequence in $V_p[\nu]$.
\end{prop}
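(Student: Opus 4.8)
The plan is to exploit the fact that the Fejér kernel is a nonnegative approximate identity: with the normalization above one checks $K_n\ge 0$ and $\frac1\pi\int_{-\pi}^\pi K_n(t)\,dt=1$, so each Fejér mean $F_n$ is an average of translates $f(\cdot+t)$ against the probability density $K_n(t)\,dt/\pi$. Since $f$ is $2\pi$-periodic, both $\upsilon_p(k,\cdot)$ and the supremum norm are translation invariant, and this is what will let the averaging be controlled without loss.

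For the direct implication, I would suppose $f\in V_p[\nu]$ and fix a collection $\{I_j\}_{j=1}^k$ of nonoverlapping intervals. Writing the increment of $F_n$ over $I_j$ as
$$F_n(I_j)=\frac1\pi\int_{-\pi}^\pi f(I_j+t)\,K_n(t)\,dt,$$
where $f(I_j+t)$ denotes the increment of $f$ over the translated interval, I would apply Minkowski's integral inequality with the $\ell^p$-norm taken over $j=1,\dots,k$ and the measure $K_n(t)\,dt/\pi$ to get
$$\Big(\sum_{j=1}^k|F_n(I_j)|^p\Big)^{1/p}\le \frac1\pi\int_{-\pi}^\pi\Big(\sum_{j=1}^k|f(I_j+t)|^p\Big)^{1/p}K_n(t)\,dt\le \upsilon_p(k,f),$$
the last step using that $\{I_j+t\}_{j=1}^k$ is again a collection of $k$ nonoverlapping intervals (this is where periodicity enters), so the inner sum is at most $\upsilon_p(k,f)^p$, together with $\frac1\pi\int_{-\pi}^\pi K_n=1$. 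Taking suprema yields $\upsilon_p(k,F_n)\le\upsilon_p(k,f)$ for every $k$, hence $V_{p,\nu}(F_n)\le V_{p,\nu}(f)$; the bound $\sup_x|F_n(x)|\le\sup_x|f(x)|$ follows in the same way from $K_n\ge0$. Therefore $\|F_n\|_{p,\nu}\le\|f\|_{p,\nu}$ for every $n$, so $\{F_n\}$ is bounded in $V_p[\nu]$.

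For the converse, I would assume $\|F_n\|_{p,\nu}\le M$ for all $n$. Since $f$ is continuous and $2\pi$-periodic, Fejér's theorem gives $F_n\to f$ uniformly, and in particular pointwise. I would then invoke Theorem \ref{Helly}: a subsequence of $\{F_n\}$ converges pointwise to some $g\in V_p[\nu]$, and by uniqueness of pointwise limits $g=f$, so $f\in V_p[\nu]$. Alternatively, one can argue directly from the lower semicontinuity of $\upsilon_p(k,\cdot)$ under pointwise convergence, exactly as in the completeness proof of Proposition \ref{Banach}: for any fixed collection $\{I_j\}_{j=1}^k$ one has $\big(\sum_j|f(I_j)|^p\big)^{1/p}=\lim_n\big(\sum_j|F_n(I_j)|^p\big)^{1/p}\le M\nu(k)$, whence $V_{p,\nu}(f)\le M$.

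The translation invariance and the normalization $\frac1\pi\int_{-\pi}^\pi K_n=1$ are routine, and the reverse direction is immediate once Fej\'er's theorem and lower semicontinuity are in hand. The only real obstacle is the forward direction, specifically the clean application of Minkowski's integral inequality and the care needed in handling the translated intervals for a periodic function; once that inequality is set up, both implications follow quickly.
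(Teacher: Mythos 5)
Your proof is correct and follows essentially the same route as the paper: positivity and the normalization $\frac1\pi\int_{-\pi}^{\pi}K_n(t)\,dt=1$ of the Fej\'er kernel give $V_{p,\nu}(F_n)\le V_{p,\nu}(f)$ in the forward direction, and Fej\'er's theorem combined with Theorem \ref{Helly} (or, as you note, plain lower semicontinuity of $\upsilon_p(k,\cdot)$ under pointwise limits, as in the completeness argument of Proposition \ref{Banach}) handles the converse. The only cosmetic difference is that you obtain the key estimate in one step via Minkowski's integral inequality, whereas the paper uses the triangle inequality followed by Jensen's inequality; the two are interchangeable here.
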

\begin{proof}[Proof]
Suppose the sequence $\{F_n\}$  of  Fej\'{e}r means of $f$ is  bounded in $V_p[\nu]$. By Fej\'{e}r's theorem \cite[p. 89]{Zygmund} it converges pointwise to $ f$. Furthermore, Theorem \ref{Helly} ensures  the existence of a subsequence $\{F_{n_j}\}$ and a function $g$ in $V_p[\nu]$ such that $F_{n_j}\rightarrow g$ pointwise as $j\rightarrow\infty$. Evidently $f=g$, and hence $f\in V_p[\nu]$.

Conversely, since $K_n$ is positive and $\int_{-\pi}^{\pi}K_n(t)dt=\pi$, for any $f\in V_p[\nu]$ and any collection $\{I_k\}_{k=1}^m$ of nonoverlapping intervals, we get
\begin{align*}
\Big(\sum_{k=1}^m |F_n(I_k)|^p\Big)^{\frac1{p}} &=\Big(\sum_{k=1}^m \Big|\frac1{\pi}\int_{-\pi}^{\pi}f(I_k+t)K_n(t)dt\Big|^p\Big)^{\frac1{p}}\\[.1in]
& \leq\Big(\sum_{k=1}^m \Big(\frac1{\pi}\int_{-\pi}^{\pi}|f(I_k+t)|K_n(t)dt\Big)^p\Big)^{\frac1{p}}\\[.1in]
& \leq\Big(\sum_{k=1}^m \frac1{\pi}\int_{-\pi}^{\pi}|f(I_k+t)|^pK_n(t) dt\Big)^{\frac1{p}}\\[.1in]
& =\Big(\frac1{\pi}\int_{-\pi}^{\pi}\sum_{k=1}^m|f(I_k+t)|^pK_n(t) dt\Big)^{\frac1{p}}\\[.1in]
& \leq \Big(\frac1{\pi}\int_{-\pi}^{\pi}\upsilon_p(m,f)^pK_n(t) dt\Big)^{\frac1{p}}\\[.1in]
&=\upsilon_p(m,f)\leq V_{p,\nu}(f)\nu(m),
\end{align*}
where the second inequality is a consequence of Jensen's inequality. In particular we get $V_{p,\nu}(F_n)\leq V_{p,\nu}(f)$ for all $n$. Finally, since $F_n(a)\rightarrow f(a)$, the sequence $\{F_n\}$ is bounded in $V_p[\nu]$.
\end{proof}


\section{The modulus of $p$-variation as a $K$-functional}\label{sec4}

The aim of this section is to calculate $K(f,t; L^\infty, BV_p)$ in terms of the modulus of $p$-variation of $f$. Throughout this section, we consider functions defined on $[0,1]$ and we denote their $p$-variations by
$$
{\rm Var}_p(f)={\rm Var}_\Phi(f;[0,1]),
$$
where $\phi_j(x)=x^p$ for all $j$ (see Remark \ref{rem1}). Obviously, $BV_p\hookrightarrow L^\infty$and it makes sense to investigate how well, bounded functions can be approximated by functions in $BV_p$. We consider only continuous functions which is not a very severe restriction. 

The following simple lemma will be useful in the proof of Theorem \ref{KfunctTeo1}. 
\begin{lemma}
    \label{approxLemma}
    Let $f$ be a function on $[0,1]$ and let 
    $0=x_0<x_1<...<x_M=1$. If $g_M$ is the piecewise linear function that interpolates $f$ at the points $(x_i,f(x_i))~~(0\le i\le M)$, then 
    \begin{equation}
        \nonumber
        {\rm Var}_p(g_M)\le \upsilon_p(M,f).
    \end{equation}
\end{lemma}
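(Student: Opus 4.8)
The plan is to work at the level of $p$-th powers: I would bound the sum $\sum_j |g_M(I_j)|^p$ over every finite collection $\{I_j\}$ of nonoverlapping subintervals of $[0,1]$ by $\upsilon_p(M,f)^p$, and then take the supremum and the $p$-th root to get $\mathrm{Var}_p(g_M)\le \upsilon_p(M,f)$. The whole point is that $g_M$ is affine (hence monotone) on each piece $[x_{\ell-1},x_\ell]$, so it changes monotonicity only at the nodes, and the $p$-variation should therefore be governed entirely by the node values $g_M(x_i)=f(x_i)$.

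First I would reduce from arbitrary nonoverlapping collections to partitions. Given $I_j=[s_j,t_j]$ ordered so that $t_j\le s_{j+1}$, I insert the gap intervals $[t_j,s_{j+1}]$ and list all endpoints as an increasing sequence $y_0<y_1<\dots<y_K$ in $[0,1]$. Since each $I_j$ is exactly one of the consecutive intervals $[y_{i-1},y_i]$ and I have only added nonnegative terms, $\sum_j |g_M(I_j)|^p\le \sum_{i=1}^K |g_M(y_i)-g_M(y_{i-1})|^p=:F(y_0,\dots,y_K)$. Thus it suffices to bound $F$ over all increasing sequences.

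Next comes the heart of the matter: showing the maximizing breakpoints may be taken at the nodes. Regard $F$ as a function of a single coordinate $y_i$ ($1\le i\le K-1$), holding the rest fixed; only the two terms $\phi(y_i)=|g_M(y_i)-g_M(y_{i-1})|^p+|g_M(y_{i+1})-g_M(y_i)|^p$ depend on it. For $p\ge 1$ the map $u\mapsto |u-a|^p+|b-u|^p$ is convex, and $y_i\mapsto g_M(y_i)$ is affine on each $[x_{\ell-1},x_\ell]$, so $\phi$ is convex on each linear piece and hence attains its maximum over $[y_{i-1},y_{i+1}]$ either at an interior node $x_\ell$ or at one of the endpoints $y_{i-1},y_{i+1}$ (the extreme coordinates $y_0,y_K$ occur in one term only and are handled the same way). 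Since $F$ is continuous on the compact order simplex $\{0\le y_0\le\dots\le y_K\le 1\}$ it attains a maximum; processing the coordinates left to right and replacing each interior $y_i$ by a $\phi$-maximizing node lying in $[y_{i-1},y_{i+1}]$ (or letting it coincide with a neighbor, which merges intervals) keeps $F$ at its maximal value while respecting the ordering. Discarding the degenerate intervals, a maximizing configuration has all breakpoints among $\{x_0,\dots,x_M\}$.

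Finally, at such a configuration $g_M(x_i)=f(x_i)$, so the maximal value of $F$ equals $\sum_k |f(x_{j_k})-f(x_{j_{k-1}})|^p$ for some nodes $x_{j_0}<\dots<x_{j_K}$, i.e.\ a sum over the $K\le M$ nonoverlapping intervals $[x_{j_{k-1}},x_{j_k}]$. This is at most $\upsilon_p(K,f)^p\le\upsilon_p(M,f)^p$ because $\upsilon_p(\cdot,f)$ is nondecreasing in the number of intervals, and the conclusion follows. The hard part will be precisely the third paragraph: justifying that breakpoints placed strictly inside a linear piece can never increase the sum. This rests on the convexity of $\phi$ (equivalently the superadditivity $(a+b)^p\ge a^p+b^p$ for like-signed increments), and it must be combined with careful bookkeeping so that the ordering $y_{i-1}\le y_i\le y_{i+1}$ is preserved and the final number of surviving intervals does not exceed $M$.
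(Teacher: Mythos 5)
Your proof is correct and follows essentially the same route as the paper: the paper's argument simply observes that ${\rm Var}_p(g_M)$ is realized by the (at most $M$) intervals between consecutive local extrema of $g_M$, at whose endpoints $g_M$ agrees with $f$, dismissing this as ``easy to see.'' Your reduction to partitions and the convexity argument pushing breakpoints to the nodes is a careful justification of precisely that step, so the two proofs share the same underlying idea.
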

\begin{proof}
Let $\{x_{i_k}\}$ be the subset of $\{x_i\}$ consisting of points of local extremum of $g_M$. It is easy to see that
$$
{\rm Var}_p(g_M)=\left(\sum_k|f(x_{i_{k+1}})-f(x_{i_k})|^p\right)^{1/p}\le \upsilon_p(M,f),
$$
since the sum extends over at most $M$ terms.
\end{proof}

\begin{theorem}
\label{KfunctTeo1}
Let $f$ be continuous and $1\le p<\infty$. Then
\begin{equation}
    \label{Kfunct1}
    K(f,t;L^\infty, BV_p)\approx t \upsilon_p\left(\left[t^{-1}\right]^p,f\right).
\end{equation}
\end{theorem}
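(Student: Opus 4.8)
The plan is to prove the two-sided estimate \eqref{Kfunct1} by establishing each inequality separately, working with the $p$-variation seminorm ${\rm Var}_p(\cdot)$ as the $BV_p$-quantity in the $K$-functional and abbreviating $M:=[t^{-1}]^p$, so that $M^{1/p}=[t^{-1}]$ and hence $tM^{1/p}=t[t^{-1}]\le 1$, while $tM^{1/p}\ge 1/2$ as soon as $t\le 1/2$; in particular $M^{-1/p}\lesssim t$ in that range. I restrict attention to small $t$, which is the regime relevant for interpolation, the complementary range being elementary since $K(f,t;L^\infty,BV_p)$ is then comparable to $\|f\|_\infty$.

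For the lower bound (the easier direction), I would fix an arbitrary $g\in BV_p$ and, by compactness and continuity, choose a collection $\{I_j\}_{j=1}^M$ of nonoverlapping intervals in $[0,1]$ attaining $\upsilon_p(M,f)$. Writing $I_j=[a_j,b_j]$ and inserting $g$ gives the pointwise bound $|f(I_j)|\le 2\|f-g\|_\infty+|g(I_j)|$, so by the triangle inequality in $\ell_p$,
$$
\upsilon_p(M,f)\le 2M^{1/p}\|f-g\|_\infty+{\rm Var}_p(g).
$$
Multiplying by $t$ and using $tM^{1/p}\le 1$ yields $t\upsilon_p(M,f)\lesssim \|f-g\|_\infty+t\,{\rm Var}_p(g)$; taking the infimum over $g$ gives $t\upsilon_p(M,f)\lesssim K(f,t;L^\infty,BV_p)$.

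The upper bound is the heart of the matter and requires an explicit near-optimal approximant. I would set $\lambda:=2M^{-1/p}\upsilon_p(M,f)$ and build a partition $0=x_0<x_1<\cdots<x_N=1$ \emph{greedily}, letting $x_i:=\sup\{x\in[x_{i-1},1]:{\rm osc}(f;[x_{i-1},x])\le\lambda\}$; uniform continuity of $f$ makes the oscillation functional continuous and forces each step to advance by at least the modulus-of-continuity amount, so the process terminates after finitely many steps, and each interval $[x_{i-1},x_i]$ with $x_i<1$ satisfies ${\rm osc}(f;[x_{i-1},x_i])=\lambda$ exactly. The crucial counting step is that $N\le M$: were $N\ge M+1$, the first $M$ intervals would all have oscillation $\lambda$, and selecting in each a pair of points attaining it produces $M$ nonoverlapping intervals witnessing $\lambda M^{1/p}\le\upsilon_p(M,f)$, contradicting the choice of $\lambda$. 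Taking $g$ to be the piecewise linear interpolant of $f$ at $x_0,\dots,x_N$, Lemma \ref{approxLemma} together with monotonicity of $\upsilon_p$ in its first argument and $N\le M$ gives ${\rm Var}_p(g)\le\upsilon_p(N,f)\le\upsilon_p(M,f)$, while on each subinterval both $f$ and $g$ lie in the range of $f$ there, so $\|f-g\|_\infty\le\max_i{\rm osc}(f;[x_{i-1},x_i])\le\lambda$. Invoking $M^{-1/p}\lesssim t$ then yields
$$
K(f,t;L^\infty,BV_p)\le\|f-g\|_\infty+t\,{\rm Var}_p(g)\lesssim t\upsilon_p(M,f).
$$

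I expect the principal obstacle to lie entirely in the greedy construction: verifying that it is well-defined and finite (continuity and compactness for the oscillation functional) and, above all, pinning down the calibration $\lambda=2M^{-1/p}\upsilon_p(M,f)$ that simultaneously forces $N\le M$ and keeps $\|f-g\|_\infty$ of the correct order. A secondary point worth addressing is the constant term in the $BV_p$-norm: since ${\rm Var}_p$ annihilates constants, the estimate is naturally phrased with the $p$-variation seminorm, and one should confirm this matches the intended normalization of the couple $(L^\infty,BV_p)$.
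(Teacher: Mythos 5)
Your proposal is correct and follows essentially the same route as the paper: the lower bound via the triangle inequality for $\upsilon_p$ with $M=[t^{-1}]^p$, and the upper bound via a greedy, nonlinearly chosen piecewise-linear interpolant controlled by Lemma \ref{approxLemma}. Your oscillation-based knot selection with the doubled threshold $\lambda=2M^{-1/p}\upsilon_p(M,f)$ is a minor variant of the paper's construction \eqref{nonTrivial} that merges its Cases I and II and replaces the separate argument for the last interval $J_{M-1}$ by the single counting bound $N\le M$, but the substance of the proof is the same.
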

\begin{proof}
By definition, we have
$$
K(f,t; L^\infty, BV_p)=\inf_{g\in BV_p}\left(\|f-g\|_{L^\infty}+t{\rm Var}_p(g)\right).
$$
(Note although that ${\rm Var}_p(\cdot)$ is simply a semi-norm on $BV_p$.) We shall first prove that
the right-hand side of (\ref{Kfunct1}) is dominated by the left-hand side. For any $M\in\mathbb{N}$ and any $g\in BV_p$, we have
\begin{eqnarray}
\nonumber
t\upsilon_p(M,f)&\le& t\upsilon_p(M,f-g)+t\upsilon_p(M,g)\\
\nonumber
&\le&tM^{1/p}\|f-g\|_{L^\infty}+t{\rm Var}_p(g).
\end{eqnarray}
Taking $M=\left[t^{-1}\right]^p$ we obtain
$$
t\upsilon_p\left(\left[t^{-1}\right]^p, f\right)\le\|f-g\|_{L^\infty}+t{\rm Var}_p(g)
$$
for any $g\in BV_p$, and it follows that
$$
t\upsilon_p\left(\left[t^{-1}\right]^p, f\right)\le K(f,t;L^\infty,BV_p).
$$
We proceed with the opposite inequality. Fix $t>0$ and let $M=\left[t^{-1}\right]^p$ as above. We shall construct a function $g_M\in BV_p$ such that
\begin{equation}
\label{KfunctEst1}
\|f-g_M\|_{L^\infty}+t{\rm Var}_p(g)\lesssim t\upsilon_p(M,f). 
\end{equation}
The function $g_M$ will be a first-order spline interpolant of $f$ with knots $\{x_i\}$. The set of knots depends on both $f$ and $M$, i.e. $g_M$ will be obtained through a nonlinear approximation procedure. In spirit, our construction is somewhat similar to the one given in \cite{CobosKruglyak}.

Set $x_0=0$ and consider two cases.

{\bf Case I:} Assume that we can obtain $M$ additional knots by the following construction: for $j=0,1,...,M-2$ define
\begin{equation}
    \label{nonTrivial}
     x_{j+1}=\min\left\{t:\, x_j<t\le 1\quad{\rm and}\quad |f(t)-f(x_j)|=\frac{\upsilon_p(M,f)}{M^{1/p}}\right\},
\end{equation}
and $x_M=1$. Set $J_i=[x_i,x_{i+1}]$ for $0\le i\le M-1$. Continuity of $f$ ensures that the above construction makes sense. Per definition,
\begin{equation*}
\label{KfunctEst2}
\max_{t\in J_i}|f(t)-f(x_i)|\le|f(x_{i+1})-f(x_i)|=\frac{\upsilon_p(M,f)}{M^{1/p}},
\end{equation*}
for $j=0,...,M-2$. We shall prove that
\begin{equation}
    \label{KfunctEst3}
    \max_{t\in J_{M-1}}|f(t)-f(x_{M-1})|\le \frac{\upsilon_p(M,f)}{M^{1/p}}.
\end{equation}
Assume that (\ref{KfunctEst3}) is false, then there exists $\xi\in(x_{M-1},x_M]$ such that 
$$
|f(\xi)-f(x_{M-1})|>\frac{\upsilon_p(M,f)}{M^{1/p}}.
$$
Set $I=[x_{M-1},\xi]$, then
\begin{eqnarray}
\nonumber
|f(I)|^p+\sum_{j=0}^{M-2}|f(J_i)|^p&=&|f(I)|^p+(M-1)\frac{\upsilon_p(M,f)^p}{M}\\
\nonumber
&>&\frac{\upsilon_p(M,f)^p}{M}+(M-1)\frac{\upsilon_p(M,f)^p}{M}\\
\nonumber
&=&\upsilon_p(M,f)^p
\end{eqnarray}
which is a contradiction.
Define now $g_M$ to be the piecewise linear function that interpolates $f$ at the points $(x_i,f(x_i))$ for $i=0,1,...,M$. By Lemma \ref{approxLemma},
\begin{equation}
    \label{pVarEst}
    {\rm Var}_p(g_M)\le \upsilon_p(M,f).
\end{equation}
At the same time,
$$
\|f-g_M\|_{L^\infty}=\max_j\|f-g_M\|_{L^\infty(J_i)}
$$
and for each $t\in J_i$ we have by definition
\begin{eqnarray}
\nonumber
|f(t)-g_M(t)|&=&\left|f(t)-f(x_j)-\frac{f(x_{j+1})-f(x_j)}{x_{j+1}-x_j}(t-x_j)\right|\\
\nonumber
&\le&|f(t)-f(x_j)|+|f(x_{j+1})-f(x_j)|\\
\nonumber
&\le& 2\frac{\upsilon_p(M,f)}{M^{1/p}}
\end{eqnarray}
Hence,
\begin{equation}
    \label{LInfEst}
    \|f-g_M\|_{L^\infty}\le\frac{2\upsilon_p(M,f)}{M^{1/p}}.
\end{equation}
By (\ref{pVarEst}) and (\ref{LInfEst}), we have
\begin{align*}
    \nonumber
    \|f-g_M\|_{L^\infty}+t{\rm Var}_p(g_M)&\le\frac{2\upsilon_p(M,f)}{M^{1/p}}+t\upsilon_p(M,f)\\[.1in]
    &\le 5t\upsilon_p(M,f)
\end{align*}
since $M^{-1/p}\le 2t$.

{\bf Case II:} Assume that the construction  (\ref{nonTrivial}) only yields $N<M$ additional knots. This case includes particularly the situation when 
\begin{equation}
\nonumber
\max_{0\le t\le1}|f(t)-f(0)|<\frac{\upsilon_p(M,f)}{M^{1/p}},
\end{equation}
and only one additional knot $x_1=1$ is obtained.
We define $g_M$ to be the linear interpolant at the $N+1$ point $(x_i,f(x_i))$. It is clear that ${\rm Var}_p(g_M)\le \upsilon_p(N,f)\lesssim\upsilon_p(M,f)$ and as above we have $\|f-g_M\|_{L^\infty}\le 2M^{-1/p}\upsilon_p(M,f)$.
The proof of (\ref{KfunctEst1}) is concluded as in Case I above.
\end{proof}

\begin{remark}
Observe that
$$
BV_p\hookrightarrow BV_q\quad(1\le p<q\le \infty),
$$
where we use the convention $BV_\infty=L^\infty$. It would be of some interest to calculate $K(f,t;BV_q,BV_p)$, and then Theorem \ref{KfunctTeo1} would simply correspond to the case $q=\infty$. Simple considerations suggest that
\begin{equation*}
    \label{conjecture}
    K(f,t; BV_q,BV_p)\approx t\upsilon_p\left(\left[t^{-1}\right]^{1/s},f\right),
\end{equation*}
where $s=1/p-1/q$, but we do not have a proof for this.
\end{remark}


\section{The uniform convergence of Fourier series }\label{sec5}

Let $\omega$ be a modulus of continuity, $\nu$ and $\nu^p$ be moduli of variation, and $1\leq p <\infty$. In this section we aim to establish necessary and sufficient conditions to guarantee uniform convergence of the Fourier series of every function in the class $H^\omega\cap V_p[\nu]$.

\medskip
Throughout this section, we use the following notation:
\begin{equation}\label{rho}
\rho(n):=\min_{1 \leq r \leq n-1} \left\{\omega\left(\frac{1}{n}\right) \sum_{k=1}^{r} \frac{1}{k}+\sum_{k=r+1}^{n-1}\frac{\nu(k)}{k^{1+\frac{1}{p}}}\right\}.
\end{equation}
\begin{remark}
Suppose the minimum in \eqref{rho} is attained at $r=\theta=\theta(n)$. Then by Lemma \ref{epsionprop}(ii), there exists some integer $N$ such that the sequence $\frac{\nu(n)}{n^{\frac1{p}}}$ is decreasing for $n\ge N$, and Lemma \ref{thetaprop} implies that $\theta(n)$ is uniquely determined for $n\geq N$. For $n<N$, we take $\theta(n)$ to be the smallest $r$ realizing the minimum in \eqref{rho}.
\end{remark}
Define
\begin{equation*}\label{sigma}
\sigma(n):=\omega\left(\frac{1}{n}\right) \sum_{k=1}^{\theta} \frac{1}{k}+\sum_{k=\theta+1}^{n-1}\frac{\varepsilon_p(k)}{k},
\end{equation*}
\begin{equation*}
\tau(n):=\omega\left(\frac{1}{n}\right) \sum_{k=1}^{\theta} \frac{1}{k}+\sum_{k=\theta+1}^{n-1}\frac{\nu(k)-\nu(k-1)}{k^{\frac{1}{p}}},
\end{equation*}
and
\begin{equation*}
\eta(n):=\omega\left(\frac{1}{n}\right) \sum_{k=1}^{\theta} \frac{1}{k}+\sum_{k=\theta+1}^{n-1}\Delta\Big(\frac1{k^{\frac{1}{p}}}\Big)\nu(k).
\end{equation*}

Our main result of this section may be formulated as follows.
\begin{theorem} \label{convergence}
Let $\omega$, $\nu$ and $p$ be as above. Then the following conditions are equivalent.
\begin{enumerate}

\medskip
\item[\emph{(i)}] The Fourier series of every function in the class $H^\omega\cap V_p[\nu]$ converges uniformly;

\medskip
\item[\emph{(ii)}] $\lim\limits_{n\rightarrow \infty} \sigma(n)=0$;

\medskip
\item[\emph{(iii)}] $\lim\limits_{n\rightarrow \infty} \rho(n)=0$;

\medskip
\item[\emph{(iv)}] $\lim\limits_{n\rightarrow \infty} \tau(n)=0$;

\medskip
\item[\emph{(v)}] $\lim\limits_{n\rightarrow \infty} \eta(n)=0$.
\end{enumerate}
\end{theorem}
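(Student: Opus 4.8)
The plan is to split the five conditions into an \emph{arithmetic} cluster (ii)--(v), which I will show are mutually comparable (so that one tends to zero if and only if all do), and the genuinely \emph{analytic} equivalence between uniform convergence (i) and, say, condition (iii). Concretely, I would first prove $\rho(n)\approx\sigma(n)\approx\tau(n)\approx\eta(n)$; then establish the implication (iii)$\Rightarrow$(i) by a direct kernel estimate; and finally close the loop with (i)$\Rightarrow$(iii) by a counterexample construction.

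\textbf{Arithmetic equivalences.} All four quantities share the common head $\omega(1/n)\sum_{k=1}^{\theta}1/k$ (with $\rho$ evaluated at its minimizer $\theta$), so matters reduce to comparing the tails. The elementary asymptotic $\Delta(k^{-1/p})=k^{-1/p}-(k+1)^{-1/p}\approx k^{-1-1/p}$ gives $\eta(n)\approx\rho(n)$ at once. Lemma~\ref{epsionprop}(i) (quasiconcavity of $\nu^p$) yields $\varepsilon_p(k)\le\nu(k)/k^{1/p}$, hence $\sigma(n)\le\rho(n)$ termwise, while Lemma~\ref{epsionprop}(iii) gives $(\nu(k)-\nu(k-1))/k^{1/p}\lesssim\varepsilon_p(k)/k$, hence $\tau(n)\lesssim\sigma(n)$. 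The one nontrivial direction is $\rho(n)\lesssim\tau(n)$, which I would obtain by Abel summation of the $\rho$-tail written as $p\sum_k\nu(k)\Delta(k^{-1/p})$: summation by parts produces precisely the tail $\sum_k(\nu(k)-\nu(k-1))k^{-1/p}$ of $\tau$ together with a boundary term $\nu(\theta+1)/(\theta+1)^{1/p}$, and this boundary term is $\le\omega(1/n)$ exactly by the minimality (balance) property of $\theta$ recorded in Lemma~\ref{thetaprop}, so it is absorbed into the head. This chain closes (ii)$\Leftrightarrow$(iii)$\Leftrightarrow$(iv)$\Leftrightarrow$(v).

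\textbf{Sufficiency} (iii)$\Rightarrow$(i). Here I would prove the uniform bound $\|S_nf-f\|_\infty\lesssim\rho(n)$ for every $f\in H^\omega\cap V_p[\nu]$. Starting from $S_nf(x)-f(x)=\frac1\pi\int_0^\pi\psi_x(t)D_n(t)\,dt$, with $\psi_x(t)=f(x+t)+f(x-t)-2f(x)$ and $D_n$ the Dirichlet kernel, I partition $[0,\pi]$ at the sign changes of $D_n$ into intervals $I_k$ of length $\sim\pi/n$ on which $|D_n|\sim n/k$ and the sign of $D_n$ alternates with $k$. The decisive step is a summation by parts exploiting this alternation: it replaces the integral by $\sum_k c_k\delta_k$ with $c_k\sim 1/k$ and increments $\delta_k=|\psi_x(\tau_{k+1})-\psi_x(\tau_k)|$, so that what enters is the \emph{oscillation} of $\psi_x$ across adjacent intervals rather than its pointwise size. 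Each increment admits two bounds: $\delta_k\lesssim\omega(f,\pi/n)\lesssim\omega(1/n)$; and, since the underlying differences of $f$ run over nonoverlapping intervals, their $\ell_p$-norm over any dyadic block $[2^i,2^{i+1})$ is $\lesssim\upsilon_p(2^{i+1},f)\lesssim\nu(2^{i+1})$. Splitting the sum at an index $r$, the first bound gives the head $\omega(1/n)\sum_{k\le r}1/k$, and the second, applied dyadically with Hölder's inequality, gives the tail $\sum_{k>r}\nu(k)/k^{1+1/p}$; as all constants are independent of $x$, taking the infimum over $r$ yields $\|S_nf-f\|_\infty\lesssim\rho(n)$, so $\rho(n)\to0$ forces uniform convergence.

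\textbf{Necessity} (i)$\Rightarrow$(iii). I would argue by contraposition: assuming $\rho(n)\not\to0$, construct a single $f\in H^\omega\cap V_p[\nu]$ whose Fourier partial sums fail to converge uniformly. The function would be assembled as a (possibly lacunary) superposition of localized polygonal or cosine blocks whose scales and amplitudes are tuned so that, on the one hand, its modulus of continuity is $\lesssim\omega$ and its $p$-variation is $\lesssim\nu$ (here concavity of $\nu^p$ and the balance property of $\theta$ govern the trade-off), while, on the other hand, at a suitable point its $n$th partial sum resonates with the Dirichlet kernel to produce a contribution of size $\gtrsim\rho(n)$ that does not vanish along the subsequence witnessing $\rho(n)\not\to0$. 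This construction is the main obstacle: one must simultaneously respect the two competing regularity constraints ($H^\omega$ and $V_p[\nu]$) and still force the resonance, and verifying the $p$-variation bound for the assembled function is the most delicate bookkeeping in the whole argument.
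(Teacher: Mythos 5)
Your overall architecture --- arithmetic equivalences among (ii)--(v), a kernel estimate for sufficiency, a counterexample construction for necessity --- is the same as the paper's, and the first two components are sound. The chain of comparisons for (ii)$\Leftrightarrow$(iii)$\Leftrightarrow$(iv)$\Leftrightarrow$(v) via Lemma \ref{epsionprop} and Abel summation is essentially what the paper does (the paper routes the comparison of $\rho$ and $\tau$ through the elementary bounds $1-p^{-1}\le Q_k\le 2^{-1/p}$ of Lemma \ref{Q} rather than through $\Delta(k^{-1/p})\approx k^{-1-1/p}$, but these are interchangeable). Your sufficiency argument via the sign changes of the Dirichlet kernel is the classical Oskolkov--Chanturiya route; the paper instead starts from Nikol'skii's estimate, which hands you directly a sum of differences of $f$ over nonoverlapping intervals, but the two are equivalent in substance and both terminate in the same Abel-plus-H\"older step producing $\rho(n)$.

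The genuine gap is the necessity direction, which is the heart of the theorem and which you describe rather than prove. Three ingredients that your sketch does not identify are indispensable. First, the resonant function cannot be made to reproduce all of $\sigma(n)$: the head of $\sigma$ must be truncated below at a slowly growing index $\psi(n)$ (to leave room for the earlier blocks of the lacunary superposition) and the tail truncated above at $\varphi(n)=o(n)$ (so that each block has small support and the supports are disjoint); one must then prove that the truncated quantity $\bar{\sigma}$ still satisfies $\lim_{i}\bar{\sigma}(n_i)=\limsup_n\sigma(n)$, which is the content of Lemmas \ref{definitionphipsi} and \ref{lemma 4.4} and uses the standing hypotheses $\nu(k)k^{-1/p}\downarrow 0$ and concavity of $\nu^p$. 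Second, the lower bound $\widetilde{S}_{l_k}(f_k,0)\gtrsim\bar{\sigma}(l_k)$ is only useful if the contributions of all the other blocks $G_k=\sum_{s<k}f_s$ and $F_k=\sum_{s>k}f_s$ are $o(\bar{\sigma}(l_k))$; this forces a delicate lacunary selection of the scales $l_k$ (Lemma \ref{Lemma 4.5}) and an appeal to a Stechkin-type estimate for $\widetilde{S}_{l_k}(G_k,0)$, none of which appears in your plan. Third, the verification that the assembled function lies in $V_p[\nu]$ is not routine bookkeeping: it hinges on quantitative separation properties such as $4\nu(\psi(n)^2)<\nu(\min\{\varphi(n),\theta(n)\})$ built into the choice of $\psi$, without which the sum $\sum_k\big(\nu(\varphi(l_k))-\nu(\psi(l_k))\big)$ cannot be telescoped against $\nu(n)$. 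As written, your necessity step asserts the existence of the counterexample rather than constructing it, so the proposal does not yet establish (i)$\Rightarrow$(iii).
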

The proof of this result is inspired by the method given in \cite {Cha2}, but essential modifications involving construction of new auxiliary functions are required to control the distortions introduced by $p$th powers. In order to prove this result, several auxiliary lemmas of technical nature are required, and we collect them in the following subsection.    

\subsection{Technical lemmas} Let us begin with recalling Abel's transformation
\begin{equation}\label{Abel}
\sum_{k=1}^n x_ky_k=\sum_{k=1}^{n-1}\Delta(x_k)\sum_{j=1}^{k}y_j+x_n\sum_{j=1}^ny_j,
\end{equation}
which proves useful at several points in what follows.
\begin{lemma}\label{thetaprop}
With the above notation we have the following.
\begin{enumerate}

\medskip
\item[\emph{(a)}] $\frac{\nu(\theta+1)}{(\theta+1)^\frac1{p}} \leq \omega\left(\frac{1}{n}\right)\leq \frac{\nu(\theta)}{\theta^\frac1{p}}$ whenever $\theta<n-1$;

\medskip
\item[\emph{(b)}] $\omega\left(\frac{1}{n}\right)< \frac{\nu(\theta)}{\theta^\frac1{p}}$ whenever $\theta=n-1$.
\end{enumerate}
\end{lemma}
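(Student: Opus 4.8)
The plan is to treat the bracketed expression in the definition \eqref{rho} of $\rho(n)$ as a function of the discrete variable $r$ and to analyze it through its forward differences. Fix $n$ and set
\[
g(r):=\omega\!\left(\tfrac1n\right)\sum_{k=1}^{r}\frac1k+\sum_{k=r+1}^{n-1}\frac{\nu(k)}{k^{1+\frac1p}},\qquad 1\le r\le n-1,
\]
so that $\rho(n)=\min_{1\le r\le n-1}g(r)$ and, by definition, $\theta=\theta(n)$ is the smallest minimizer of $g$. A direct computation gives, for $1\le r\le n-2$,
\[
g(r+1)-g(r)=\frac{\omega(1/n)}{r+1}-\frac{\nu(r+1)}{(r+1)^{1+\frac1p}}=\frac{1}{r+1}\left(\omega\!\left(\tfrac1n\right)-\frac{\nu(r+1)}{(r+1)^{1/p}}\right),
\]
so the sign of $g(r+1)-g(r)$ is exactly that of $\omega(1/n)-\nu(r+1)/(r+1)^{1/p}$.

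The structural observation I would then record is that, by the standing assumption that $\nu(k)/k^{1/p}$ is nonincreasing (Remark \ref{remark}, Lemma \ref{epsionprop}), the term $\nu(r+1)/(r+1)^{1/p}$ is nonincreasing in $r$, hence the bracket is nondecreasing in $r$. Therefore the increments $g(r+1)-g(r)$ change sign at most once, from nonpositive to nonnegative, i.e.\ $g$ is unimodal (first nonincreasing, then nondecreasing). Consequently the smallest minimizer $\theta$ is precisely the smallest index $r\in\{1,\dots,n-2\}$ with $\omega(1/n)\ge \nu(r+1)/(r+1)^{1/p}$, and $\theta=n-1$ when no such $r$ exists.

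Both parts now follow from discrete first-order conditions. For part (a), assume $\theta<n-1$, so $\theta+1\le n-1$ and $\theta$ is interior. Optimality $g(\theta)\le g(\theta+1)$ gives $g(\theta+1)-g(\theta)\ge0$, i.e.\ the left inequality $\nu(\theta+1)/(\theta+1)^{1/p}\le\omega(1/n)$. For the right inequality I would exploit that $\theta$ is the \emph{smallest} minimizer: when $\theta\ge2$, the index $\theta-1$ is not a minimizer, so $g(\theta-1)>g(\theta)$, i.e.\ $g(\theta)-g(\theta-1)<0$, which reads $\omega(1/n)<\nu(\theta)/\theta^{1/p}$. For part (b), if $\theta=n-1$ then $n-2$ is not a minimizer, so $g(n-2)>g(n-1)$, i.e.\ $g(n-1)-g(n-2)<0$, which is exactly $\omega(1/n)<\nu(n-1)/(n-1)^{1/p}=\nu(\theta)/\theta^{1/p}$.

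The one delicate point — and the main obstacle — is the boundary case $\theta=1$ in part (a), where $g(\theta-1)$ is undefined and the argument for the right inequality is unavailable. This case is genuinely tied to small $n$: an interior minimizer at $\theta=1$ forces $\nu(2)/2^{1/p}\le\omega(1/n)$ via the left inequality, whereas $\omega(1/n)\to0$ as $n\to\infty$ while $\nu(2)/2^{1/p}$ is a fixed positive constant. Hence there is an integer $N$ with $\omega(1/n)<\nu(2)/2^{1/p}$ for all $n\ge N$, which yields $g(2)<g(1)$ and so $\theta\ge2$. Thus for every $n\ge N$ an interior minimizer automatically satisfies $\theta\ge2$, and the clean first-order argument above delivers both inequalities of (a). I would therefore state the conclusions for $n\ge N$ — exactly the range in which the subsequent Remark invokes this lemma to deduce that $\theta(n)$ is uniquely determined.
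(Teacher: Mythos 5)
Your proof is correct and follows essentially the same route as the paper's: both compare the minimized expression at $r=\theta$ with its values at $r=\theta\pm1$ and read off the two inequalities of (a) (and, likewise, (b)) from the resulting discrete first-order conditions. The only substantive difference is that you explicitly patch the boundary case $\theta=1$, where the comparison with $r=\theta-1$ is unavailable, by restricting to $n\ge N$ so that $\omega(1/n)<\nu(2)/2^{1/p}$ forces $\theta\ge2$ --- a detail the paper's two-line proof passes over in silence --- and you use the smallest-minimizer convention to obtain the strict inequality in (b).
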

\begin{proof} (a) Let $\theta< n-1$. Then
\begin{equation*}\label{itheta}
\omega\left(\frac{1}{n}\right)\sum_{k=1}^{\theta} \frac{1}{k} + \sum_{k=\theta+1}^{n-1}\frac{\nu(k)}{k^{1+\frac1{p}}} \leq  \omega\left(\frac{1}{n}\right)\sum_{k=1}^{r} \frac{1}{k} + \sum_{k=r+1}^{n-1}\frac{\nu(k)}{k^{1+\frac1{p}}}
\end{equation*}
where $r=\theta-1$ or $\theta+1$. This yields
\begin{equation*}\label{cond-theta}
\frac{\nu(\theta+1)}{(\theta+1)^\frac1{p}} \leq \omega\left(\frac{1}{n}\right)\leq \frac{\nu(\theta)}{\theta^\frac1{p}}.
\end{equation*}
Likewise, one may verify (b). 
\end{proof}

\begin{lemma}\label{definitionphipsi}
There exist sequences $\varphi$ and $\psi$ of positive integers with the following properties.
\begin{enumerate}

\medskip
\item[\emph{(a)}] $\frac{\varphi(n)}{n}\rightarrow 0$ and $\frac{2n}{\varphi(n)}\leq n\omega\left(\frac1{n}\right)\leq \varphi(n)\uparrow\infty$ as $n\rightarrow\infty$; (See Remark \ref{remark}.)

\medskip
\item[\emph{(b)}] \vspace{.1cm} $\sum\limits_{k=\varphi(n)+1}^{n-1} \frac{\varepsilon_p(k)}{k}\rightarrow 0$ as $n\rightarrow\infty$;

\medskip
\item[\emph{(c)}] \vspace{.1cm} $4 \ \nu(\psi(n)^2)<\nu\big(\min\{\varphi(n),\theta(n)\}\big)$ and $\omega\left(\frac1{n}\right)^{-1}\geq \psi(n)\uparrow\infty$ as $n\rightarrow\infty$.
\end{enumerate}
\end{lemma}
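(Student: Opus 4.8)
The plan is to construct $\varphi$ and $\psi$ by first prescribing explicit two-sided envelopes and then enforcing monotonicity by passing to running extrema. Throughout I use the standing assumptions from Remark \ref{remark}: $\omega(t)/t\to\infty$, so that $n\omega(1/n)\to\infty$, and $\nu(n)/n^{1/p}\downarrow 0$, so that $\nu(n)^p=o(n)$, and by Lemma \ref{epsionprop} the sequence $\varepsilon_p(k)$ is nonincreasing with $\varepsilon_p(k)\le\nu(k)/k^{1/p}\to0$.

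For (a)--(b) I would set $L(n):=\max\{\lceil n\omega(1/n)\rceil,\lceil 2/\omega(1/n)\rceil\}$ and $\ell(n):=\max\{L(n),\lceil\sqrt{n}\,\nu(n)^{p/2}\rceil\}$. Since $\omega(1/n)\to0$ and $n\omega(1/n)\to\infty$ one checks $L(n)=o(n)$, and since $\nu(n)^p=o(n)$ one has $\sqrt n\,\nu(n)^{p/2}=(n\nu(n)^p)^{1/2}=o(n)$; hence $\ell(n)=o(n)$ while $\ell(n)\to\infty$. I then define $\varphi(n):=\max_{1\le m\le n}\ell(m)$, the running maximum, which is a nondecreasing sequence of positive integers with $\varphi(n)\ge\ell(n)$; a short argument shows that the running maximum of an $o(n)$ sequence is again $o(n)$. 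The lower bound $\varphi(n)\ge L(n)$ gives all the inequalities in (a): $n\omega(1/n)\le\varphi(n)$ and $\varphi(n)\ge 2/\omega(1/n)$ (equivalently $2n/\varphi(n)\le n\omega(1/n)$), together with $\varphi(n)/n\to0$ and $\varphi(n)\uparrow\infty$.

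The crux is (b), where I must control the tail $\sum_{k=\varphi(n)+1}^{n-1}\varepsilon_p(k)/k$. The key estimate, valid for all $1\le m<n$, is
\begin{equation*}
\sum_{k=m+1}^{n-1}\frac{\varepsilon_p(k)}{k}\le C_p\,\frac{\nu(n)}{m^{1/p}}.
\end{equation*}
For $p=1$ this follows from $\varepsilon_1(k)=\nu(k)-\nu(k-1)$ and telescoping after bounding $1/k\le 1/(m+1)$; for $p>1$ it follows from H\"older's inequality with exponents $p,p'$ together with the telescoping identity $\sum_{k=m+1}^{n-1}\varepsilon_p(k)^p=\nu(n-1)^p-\nu(m)^p\le\nu(n)^p$ and the bound $\sum_{k>m}k^{-p'}\lesssim m^{1-p'}$. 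Applying this with $m=\varphi(n)\ge\sqrt n\,\nu(n)^{p/2}$ yields $\sum_{k=\varphi(n)+1}^{n-1}\varepsilon_p(k)/k\lesssim\nu(n)/\varphi(n)^{1/p}\le(\nu(n)/n^{1/p})^{1/2}\to0$, which is (b). I expect this H\"older estimate to be the main obstacle: the naive bound $\varepsilon_p(\varphi(n))\ln(n/\varphi(n))$ does not obviously vanish, and the point is to exploit the summability of $\varepsilon_p(k)^p$, which telescopes to $\nu^p$, rather than the mere decay of $\varepsilon_p$.

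Finally, for (c): since $\nu(n)/n^{1/p}\downarrow0$ and $\omega(1/n)\to0$, Lemma \ref{thetaprop} forces $\theta(n)\to\infty$, because a bounded subsequence would contradict the lower bound $\nu(\theta+1)/(\theta+1)^{1/p}\le\omega(1/n)\to0$; hence $\mu(n):=\min\{\varphi(n),\theta(n)\}\to\infty$ and $\nu(\mu(n))\to\infty$. I would set $j(n):=\max\{j\ge1:4\nu(j)<\nu(\mu(n))\}$, which is well defined and tends to $\infty$ for large $n$, and take the candidate $\psi_0(n):=\min\{\lfloor\sqrt{j(n)}\rfloor,\lfloor\omega(1/n)^{-1}\rfloor\}$. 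Then $\psi_0(n)^2\le j(n)$ gives $4\nu(\psi_0(n)^2)\le 4\nu(j(n))<\nu(\mu(n))$, while $\psi_0(n)\le\omega(1/n)^{-1}$ and $\psi_0(n)\to\infty$. Enforcing monotonicity by $\psi(n):=\min_{m\ge n}\psi_0(m)$ preserves both upper-bound-type constraints, since $\psi(n)\le\psi_0(n)$ and $\nu$ is nondecreasing, and yields $\psi(n)\uparrow\infty$; finitely many initial terms may be set equal to $1$ so that every stated inequality holds for all $n$.
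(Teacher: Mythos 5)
Your construction is correct, and for the decisive part (b) it takes a genuinely different route from the paper. The paper defines $\beta_n$ implicitly as the greatest integer with $\beta_n\le n\varepsilon_p(\beta_n)$ (well defined because $\varepsilon_p\downarrow 0$ under the concavity assumption on $\nu^p$), sets $\varphi(n)\ge\max\{\beta_n,\,n\omega(1/n),\,2\omega(1/n)^{-1}\}$, and then uses precisely the ``naive'' bound you dismiss: $\sum_{k=\varphi(n)+1}^{n-1}\varepsilon_p(k)/k\le\varepsilon_p(\varphi(n)+1)\sum_{k=\varphi(n)+1}^{n-1}k^{-1}\lesssim\frac{\varphi(n)}{n}\ln\frac{n}{\varphi(n)}$, which does vanish because the definition of $\beta_n$ forces $\varepsilon_p(\varphi(n)+1)<(\varphi(n)+1)/n$ and $\varphi(n)=o(n)$. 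Your alternative replaces this implicit device with the explicit threshold $\varphi(n)\gtrsim(n\nu(n)^p)^{1/2}$ and the H\"older/telescoping estimate $\sum_{k=m+1}^{n-1}\varepsilon_p(k)/k\le C_p\,\nu(n)m^{-1/p}$, giving $(\nu(n)/n^{1/p})^{1/2}\to0$; this is correct, uses only the quasiconcavity $\nu(n)^p=o(n)$ rather than the monotonicity of $\varepsilon_p$, and makes the choice of $\varphi$ fully explicit, at the modest cost of an extra application of H\"older. Your treatments of (a) and (c) match the paper's in substance (max of thresholds plus running extrema for monotonicity; the $\gamma_n$ versus $j(n)$ square-root device is the same idea), and your justification that $\theta(n)\to\infty$ via Lemma \ref{thetaprop} is actually more careful than the paper's, which asserts $\theta(n)\uparrow\infty$ without comment.
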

\begin{proof}
For large enough $n$, let $\beta_n$ be the greatest integer such that $\beta_n\leq n\varepsilon_p(\beta_n)$.
The $\beta_n$ are well-defined since $\nu(k)\uparrow\infty$ and $\varepsilon_p(k)\downarrow 0$ as $k\rightarrow\infty$, which as well implies that $\beta_n \uparrow\infty$ as $n\rightarrow\infty$. Thus, we see that
$\frac{\beta_n}{n}\leq \varepsilon_p(\beta_n)\rightarrow 0$

Existence of a sequence $\varphi$ satisfying (a) is guaranteed by defining $\varphi(n)$ to be the smallest integer such that 
$$
\varphi(n)\geq\max \left\{\beta_n,n\omega\left(\frac1{n}\right),2\omega\left(\frac1{n}\right)^{-1}\right\}.
$$
On the other hand, from the  definition of $\beta_n$ it follows that $n\varepsilon_p(\varphi(n)+1)< \varphi(n)+1$, since $\beta_n\leq \varphi(n)$. By using this fact, we obtain
\begin{align*}\label{rr}
\sum_{k=\varphi(n)+1}^{n-1} \frac{\varepsilon_p(k)}{k}&\leq\varepsilon_p(\varphi(n)+1)\sum_{k=\varphi(n)+1}^{n-1} \frac{1}{k}\\[.1in]
&<\frac{\varphi(n)+1}{n} \sum_{k=\varphi(n)+1}^{n-1} \frac{1}{k}\\[.1in]
&\lesssim \frac{\varphi(n)}{n} \ln \frac{n}{\varphi(n)},
\end{align*}
which implies (b), since $\frac{n}{\varphi(n)}\rightarrow\infty$ by (a). To see (c), as above let $n$ be large enough such that $4\nu(1)<\nu\big(\min\{\varphi(n),\theta(n)\}\big)$, and define $\gamma_n$ to be the greatest integer such that $4\nu(\gamma_n^2)<\nu\big(\min\{\varphi(n),\theta(n)\}\big)$. Then evidently, we have $\gamma_n\uparrow\infty$ since $\varphi(n)\uparrow\infty$ and $\theta(n)\uparrow\infty$. Finally, by taking $\psi(n)$ to be the largest integer such that
$$
\psi(n)\leq \min \left\{\gamma_n,\omega\left(\frac1{n}\right)^{-1}\right\},
$$
part (c) is obtained.
\end{proof}

{\bf Notation.} In the sequel, we shall use the following symbols. For every $n$, $\mu(n):=\min\{\varphi(n),\theta(n)\}$ and $M(n):=\max\{\varphi(n),\theta(n)\}$.

Let $\{\sigma(n_i)\}$ be a subsequence of $\{\sigma(n)\}$ such that $\lim\limits_{i\rightarrow\infty}\sigma(n_i)=\limsup\limits_{n \to \infty} \sigma(n)$, and define
$$
\bar{\sigma}(n):=\omega\left(\frac{1}{n}\right) \sum_{k=\psi(n)}^{\mu(n)} \frac{1}{k}+\chi(n)\sum_{k=\mu(n)+1}^{M(n)}\frac{\varepsilon_p(k)}{k},
$$
where the sequence $\chi$ is defined as follows: $\chi(n)=1$ when $\varphi(n)-\theta(n)>0$, whereas $\chi(n)=0$ when $\varphi(n)-\theta(n)\leq 0$.

\begin{lemma}\label{lemma 4.4}
With the above notation, the following two statements hold true.
\begin{enumerate}

\medskip
\item[\emph{(a)}] $\lim\limits_{i\rightarrow \infty} \bar{\sigma}(n_i)=\limsup\limits_{n \to \infty} \sigma(n)$;

\medskip
\item[\emph{(b)}] $\sigma(n)-\bar{\sigma}(n)\lesssim \omega(\frac{1}{n}) \ln\big(\omega(\frac{1}{n})^{-1}\big) \lesssim  \bar{\sigma}(n)$.
\end{enumerate}
\end{lemma}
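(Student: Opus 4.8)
The plan is to obtain (a) as an easy consequence of the left-hand inequality in (b), and then to prove (b) by writing $\sigma(n)-\bar\sigma(n)$ explicitly in the two cases $\chi(n)=1$ and $\chi(n)=0$ and estimating each resulting block against $\omega(1/n)\ln(\omega(1/n)^{-1})$.

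\emph{Reduction of (a).} Since $\bar\sigma(n)$ arises from $\sigma(n)$ by discarding nonnegative terms and shrinking summation ranges, one has $0\le\sigma(n)-\bar\sigma(n)$. Granting the bound $\sigma(n)-\bar\sigma(n)\lesssim\omega(1/n)\ln(\omega(1/n)^{-1})$ from (b) and recalling that $x\ln(1/x)\to0$ as $x\to0^+$ while $\omega(1/n)\to0$, I conclude $\sigma(n)-\bar\sigma(n)\to0$. As $\sigma(n_i)\to\limsup_n\sigma(n)$ by the choice of $\{n_i\}$, writing $\bar\sigma(n_i)=\sigma(n_i)-(\sigma(n_i)-\bar\sigma(n_i))$ gives $\lim_i\bar\sigma(n_i)=\limsup_n\sigma(n)$, which is (a). So the whole content is (b).

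\emph{Left inequality.} A direct computation shows that when $\varphi(n)>\theta(n)$ (so $\chi=1$, $\mu=\theta$, $M=\varphi$) we have $\sigma-\bar\sigma=\omega(1/n)\sum_{k=1}^{\psi-1}\frac1k+\sum_{k=\varphi+1}^{n-1}\frac{\varepsilon_p(k)}{k}$, while when $\varphi(n)\le\theta(n)$ (so $\chi=0$, $\mu=\varphi$, $M=\theta$) we have $\sigma-\bar\sigma=\omega(1/n)\sum_{k=1}^{\psi-1}\frac1k+\omega(1/n)\sum_{k=\varphi+1}^{\theta}\frac1k+\sum_{k=\theta+1}^{n-1}\frac{\varepsilon_p(k)}{k}$. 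The ``head'' is $\lesssim\omega(1/n)\ln\psi(n)\le\omega(1/n)\ln(\omega(1/n)^{-1})$ by $\psi(n)\le\omega(1/n)^{-1}$ (Lemma \ref{definitionphipsi}(c)). The decisive point for the $\varepsilon_p$-tails is that, by Lemma \ref{epsionprop}(i) and Lemma \ref{thetaprop}(a), $\varepsilon_p(\theta+1)\le\frac{\nu(\theta+1)}{(\theta+1)^{1/p}}\le\omega(1/n)$ when $\theta<n-1$ (and the tail is empty when $\theta=n-1$); since $\varepsilon_p$ is nonincreasing (Lemma \ref{epsionprop}(ii)) and both tails begin at an index $>\theta$, every tail term is $\le\omega(1/n)$. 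Thus each tail is $\le\omega(1/n)\ln\frac{n}{L}$ where the lower endpoint $L$ satisfies $L\ge\varphi\ge n\omega(1/n)$ (using $\theta\ge\varphi$ in the second case), so $\ln\frac{n}{L}\le\ln(\omega(1/n)^{-1})$. Finally the middle block obeys $\omega(1/n)\sum_{k=\varphi+1}^{\theta}\frac1k\le\omega(1/n)\ln\frac{\theta}{\varphi}\le\omega(1/n)\ln(\omega(1/n)^{-1})$, since $\varphi\ge n\omega(1/n)$ and $\theta<n$. Summing the contributions yields the left inequality.

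\emph{Right inequality and main obstacle.} Discarding the (nonnegative) second term of $\bar\sigma$, I bound $\bar\sigma(n)\ge\omega(1/n)\sum_{k=\psi}^{\mu}\frac1k\ge\omega(1/n)\ln\frac{\mu}{\psi}$. The hypothesis $4\nu(\psi^2)<\nu(\mu)$ of Lemma \ref{definitionphipsi}(c) together with $\nu\uparrow$ forces $\psi^2<\mu$, whence $\ln\psi<\tfrac12\ln\mu$ and therefore $\ln\frac{\mu}{\psi}>\tfrac12\ln\mu$. It then suffices to see $\mu\gtrsim\omega(1/n)^{-1}$: one has $\varphi\ge2\omega(1/n)^{-1}$ by Lemma \ref{definitionphipsi}(a), while $\nu(\theta+1)\ge\nu(1)>0$ combined with Lemma \ref{thetaprop}(a) gives $\theta+1\ge(\nu(1)/\omega(1/n))^p\gtrsim\omega(1/n)^{-1}$ (when $\theta<n-1$; if $\theta=n-1$ then $\mu=\varphi\ge2\omega(1/n)^{-1}$). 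Hence $\ln\mu\gtrsim\ln(\omega(1/n)^{-1})$ and $\bar\sigma(n)\gtrsim\omega(1/n)\ln(\omega(1/n)^{-1})$. I expect the principal difficulty to lie not in any single estimate but in the disciplined bookkeeping: tracking which of $\varphi,\theta$ is the larger, treating the boundary case $\theta=n-1$ (where Lemma \ref{thetaprop}(a) is unavailable but the offending tail happens to vanish), and confirming that \emph{every} logarithmic factor generated along the way is genuinely majorized by $\ln(\omega(1/n)^{-1})$ and not merely by the possibly smaller $\ln\psi(n)$ — it is precisely the quadratic gap $\psi^2<\mu$ supplied by Lemma \ref{definitionphipsi}(c) that closes this gap.
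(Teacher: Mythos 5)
Your argument is correct, and it is organized differently from the paper's. The paper proves (a) directly: for $\theta(n)<\varphi(n)$ it combines the head estimate $\omega(1/n)\sum_{k\le\psi}1/k\lesssim\omega(1/n)\ln(\omega(1/n)^{-1})\to0$ with Lemma \ref{definitionphipsi}(b), and for $\varphi(n)\le\theta(n)$ it controls the middle block $\omega(1/n)\sum_{k=\varphi+1}^{\theta}1/k$ by an Abel-transformation chain ($\omega(1/n)\le\nu(\theta)/\theta^{1/p}$, monotonicity of $\nu(k)/k^{1/p}$, Lemma \ref{epsionprop}(iii)) that reduces it to the already-known $o(1)$ quantity $\sum_{k=\varphi+1}^{n-1}\varepsilon_p(k)/k$; the left inequality of (b) is then explicitly left to the reader. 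You invert this logic: you prove the left inequality of (b) in full — bounding each of the three blocks by $\omega(1/n)\ln(\omega(1/n)^{-1})$ via $\psi\le\omega(1/n)^{-1}$, $\varepsilon_p(k)\le\varepsilon_p(\theta+1)\le\omega(1/n)$ for $k>\theta$ (Lemmas \ref{epsionprop}(i),(ii) and \ref{thetaprop}(a)), and $\varphi\ge n\omega(1/n)$ — and then deduce (a) from $\sigma(n)-\bar\sigma(n)\to0$. Your treatment of the middle block ($\omega(1/n)\ln(\theta/\varphi)\le\omega(1/n)\ln(\omega(1/n)^{-1})$) is more elementary than the paper's Abel chain and, unlike the paper's, yields the quantitative bound actually required for (b); you also handle the boundary case $\theta=n-1$, where Lemma \ref{thetaprop}(a) fails but the offending tail is empty, which the paper does not mention. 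For the right inequality of (b) both arguments are essentially identical: $\psi^2<\mu$ from Lemma \ref{definitionphipsi}(c) gives $\sum_{k=\psi}^{\mu}1/k\gtrsim\ln\mu$, and $\mu\gtrsim\omega(1/n)^{-1}$ follows from Lemmas \ref{thetaprop}(a) and \ref{definitionphipsi}(a) (your derivation via $\theta+1\ge(\nu(1)/\omega(1/n))^p$ is in fact slightly more careful than the paper's bare assertion $\mu+1\ge\omega(1/n)^{-1}$). In short, your route supplies exactly the detail the paper omits and is self-contained; the paper's route for (a) is marginally shorter if one is content with $o(1)$ rather than the sharper logarithmic estimate.
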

\begin{proof}
(a) By Lemma \ref{definitionphipsi}(c) we see that
\begin{equation}\label{psilnrelation}
\omega\left(\frac{1}{n}\right) \sum_{k=1}^{\psi(n)} \frac{1}{k} \lesssim \omega\left(\frac{1}{n}\right) \ln\left( \omega\Big(\frac{1}{n}\Big)^{-1}\right)\rightarrow 0.
\end{equation}
On the other hand, Lemma \ref{definitionphipsi}(b) says that
\begin{equation}\label{sumo(1)}
\sum\limits_{k=\varphi(n)+1}^{n-1} \frac{\varepsilon_p(k)}{k}\rightarrow 0.
\end{equation}
With \eqref{psilnrelation} and \eqref{sumo(1)} in mind, (a) is proven when $\theta(n)< \varphi(n)$.

Now suppose that $\varphi(n)\leq\theta(n)$. By using Abel's transformation, one can observe that 
$$
\sum\limits_{k=\varphi(n)+1}^{\theta(n)} \frac{\nu(k)}{k^{1+\frac1{p}}}= \sum\limits_{k=\varphi(n)+1}^{\theta(n)} \frac{\nu(k)-\nu(k-1)}{k^\frac1{p}}-\frac{\nu(\theta(n))}{(\theta(n)+1)^{\frac{1}{p}}}+\frac{\nu(\varphi(n))}{(\varphi(n)+1)^{\frac{1}{p}}}.
$$
So we get
\begin{align*}
\omega\Big(\frac{1}{n}\Big) \sum_{k=\varphi(n)+1}^{\theta(n)}\frac{1}{k} &\overset{\text{(Lemma \ref{thetaprop})}}{\leq} \frac{\nu(\theta(n))}{\theta(n)^{\frac{1}{p}}} \sum_{k=\varphi(n)+1}^{\theta(n)}\frac{1}{k}\\[.1in]
&\hspace{.7cm}\leq \sum_{k=\varphi(n)+1}^{\theta(n)}\frac{\nu(k)}{k^{1+\frac{1}{p}}}\\[.1in]
&\overset{\text{(Lemma \ref{epsionprop}(iii))}}{\lesssim} \frac{\nu(\varphi(n))}{(\varphi(n)+1)^{\frac{1}{p}}}+\sum\limits_{k=\varphi(n)+1}^{n-1} \frac{\varepsilon_p(k)}{k} \overset{\text{\eqref{sumo(1)}}}\longrightarrow 0, \ \ \text{as} \ \ n\rightarrow\infty,
\end{align*}
where the second inequality is a result of the fact that $\frac{\nu(k)}{k^{\frac{1}{p}}}\downarrow$. Thus, part (a) is proven when $\varphi(n)\leq\theta(n)$.

\medskip
(b) Due to Lemma \ref{thetaprop}(a) and  Lemma \ref{definitionphipsi}(a), we have 

\begin{equation*}\label{muomega}
\mu(n)+1 \geq \frac{1}{\omega\left(\frac{1}{n}\right)}.
\end{equation*}
Now $\psi(n) \leq \sqrt{\mu(n)}$ implies 
\begin{align*}
\bar{\sigma}(n)&> \omega\left(\frac{1}{n}\right) \sum_{k=\psi(n)}^{\mu(n)} \frac{1}{k}> \omega\left(\frac{1}{n}\right) \sum_{k=\big[\sqrt{\mu(n)}\big]+1}^{\mu(n)} \frac{1}{k}\\[.1in]
&\gtrsim\omega\left(\frac{1}{n}\right) \ln (\mu(n)+1)\gtrsim \omega\left(\frac{1}{n}\right) \ln\left( \omega\Big(\frac{1}{n}\Big)^{-1}\right).
\end{align*}

Without much difficulty, it can also be verified that
$$
\sigma(n)-\bar{\sigma}(n) \lesssim \omega\left(\frac{1}{n}\right) \ln\left( \omega\Big(\frac{1}{n}\Big)^{-1}\right),
$$
and we leave it to the reader.
\end{proof}
By using Lemma \ref{definitionphipsi}, Remark \ref{remark} and similar arguments in \cite[P. 489]{Cha2}, the following lemma is proven. 
\begin{lemma}\label{Lemma 4.5}
There exists an increasing subsequence $\{l_k\}$ of $\{n_i\}$ with the following properties.
\begin{enumerate}

\medskip
\item[\emph{(a)}] $\frac{\varphi(l_k)}{l_k}<\frac{\psi(l_{k-1})}{l_{k-1}}$.

\medskip
\item[\emph{(b)}] $\omega(\frac{1}{l_k}) \leq \varepsilon_p(\varphi(l_{k-1}))$.

\medskip
\item[\emph{(c)}] $\varphi(l_{k-1}) <\psi(l_k)$.

\medskip
\item[\emph{(d)}] $\psi(l_{k-1})\omega\left(\frac{1}{l_k}\right)<\frac1{36\pi}\bar{\sigma}(l_{k-1})$.

\medskip
\item[\emph{(e)}]
$A_k\ln\left(A_k^{-1}\sum_{i=1}^{k-1}\omega\left(\frac{1}{l_i}\right)\varphi(l_i)\right)<\bar{c} \ \bar{\sigma}(l_k), \ \ \ \text{where} \ \ \ A_k=\frac{l_{k-1}}{l_k}\omega\left(\frac{1}{l_{k-1}}\right)$.
\end{enumerate}
\end{lemma}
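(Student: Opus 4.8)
The plan is to build $\{l_k\}$ recursively out of $\{n_i\}$ by a greedy extraction, choosing at each stage an index so large that all five requirements (a)--(e) hold simultaneously. We may assume $L:=\limsup_{n\to\infty}\sigma(n)>0$, since otherwise $\sigma(n)\to 0$ and the construction is not needed. By Lemma \ref{lemma 4.4}(a) we have $\bar\sigma(n_i)\to L$, so fix $i_0$ with $\bar\sigma(n_i)\ge L/2>0$ for all $i\ge i_0$; every chosen term $l_k$ will be of the form $n_i$ with $i\ge i_0$, so that $\bar\sigma(l_k)\ge L/2$ throughout.

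Suppose $l_1<\dots<l_{k-1}$ have already been selected. Then the right-hand sides (or left-hand sides) of (a)--(d) are determined, and in each case the quantity depending on $l_k$ moves in the favourable direction as $l_k\to\infty$: in (a), $\varphi(l_k)/l_k\to 0$ by Lemma \ref{definitionphipsi}(a) while $\psi(l_{k-1})/l_{k-1}>0$ is fixed; in (b) and (d), $\omega(1/l_k)\to 0$ by continuity of $\omega$ at $0$, while the fixed right-hand sides $\varepsilon_p(\varphi(l_{k-1}))$ and $\frac1{36\pi}\bar\sigma(l_{k-1})$ are positive (the latter by our lower bound); and in (c), $\psi(l_k)\uparrow\infty$ by Lemma \ref{definitionphipsi}(c) while $\varphi(l_{k-1})$ is fixed. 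Hence each of (a)--(d) holds once $i$ is large enough, upon setting $l_k=n_i$.

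The only delicate point is (e). Write $c=l_{k-1}\omega(1/l_{k-1})$ and $S=\sum_{i=1}^{k-1}\omega(1/l_i)\varphi(l_i)$, both fixed positive numbers once $l_1,\dots,l_{k-1}$ are fixed, so that $A_k=c/l_k$ and
$$
A_k\ln\!\left(A_k^{-1}S\right)=\frac{c\ln(S/c)}{l_k}+\frac{c\ln l_k}{l_k}.
$$
Both summands tend to $0$ as $l_k\to\infty$ (the second because $(\ln x)/x\to 0$), so the left-hand side of (e) is an indeterminate product of the form $0\cdot\infty$ that in fact vanishes in the limit. Since the right-hand side $\bar c\,\bar\sigma(l_k)\ge \bar c\,L/2$ stays bounded away from $0$ along $\{n_i\}$, inequality (e) also holds for all sufficiently large $i$.

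Finitely many conditions, each valid for all large $i$, can therefore be met at once: choosing $i$ large enough (with $n_i>l_{k-1}$ and $i\ge i_0$) and setting $l_k=n_i$ completes the inductive step, and the resulting sequence $\{l_k\}$ is the desired increasing subsequence of $\{n_i\}$. The main obstacle is thus the estimate (e), where one must recognise the $0\cdot\infty$ cancellation and combine it with the uniform lower bound $\bar\sigma(n_i)\ge L/2$ furnished by Lemma \ref{lemma 4.4}(a); the remaining items follow from the limiting behaviour of $\varphi$, $\psi$ and $\omega$ recorded in Lemma \ref{definitionphipsi} and Remark \ref{remark}.
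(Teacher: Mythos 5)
Your proof is correct and follows essentially the same route the paper takes (the paper itself only sketches this step, deferring to Chanturiya's argument): a greedy inductive extraction in which, once $l_1,\dots,l_{k-1}$ are fixed, each of (a)--(e) is an asymptotic condition that holds for all sufficiently large members of $\{n_i\}$, with (e) handled by the elementary limit $A_k\ln(A_k^{-1}S)\to 0$ against a right-hand side bounded away from zero. Your reduction to the case $\limsup_{n}\sigma(n)>0$ is legitimate in context, since the lemma is only invoked in proving (i)$\Rightarrow$(ii), which is vacuous otherwise; alternatively one could dispense with it by bounding $\bar{\sigma}(l_k)$ from below by $\omega(1/l_k)\ln\big(\omega(1/l_k)^{-1}\big)$ via Lemma \ref{lemma 4.4}(b) and using $l_k\,\omega(1/l_k)\to\infty$.
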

\begin{lemma}[\cite{Osk}]\label{Oskol}
Let  $\{\Delta_k\}$ be a sequence of disjoint intervals in $[0,2\pi]$  and $\{g_k\}$ be a sequence of $2\pi$-periodic functions such that $g_k(x)=0$ for $x \in [0,2\pi] \setminus \Delta_k$. If the function $g$ is defined by $g= \sum\limits_{k=1}^{\infty}g_k$ and $\omega(g_k,\delta) \leq \omega(\delta)$, then $\omega(g,\delta) \lesssim \omega(\delta)$. 
\end{lemma}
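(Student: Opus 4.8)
The plan is to bound $|g(x+h)-g(x)|$ for an arbitrary point $x$ and an arbitrary increment $h$ with $0\le h\le\delta$, and to show this is at most $2\omega(\delta)$; since $x$ and $h$ are arbitrary, taking suprema then yields $\omega(g,\delta)\le 2\omega(\delta)\lesssim\omega(\delta)$. Two preliminary observations drive the argument. First, because the intervals $\Delta_k$ are pairwise disjoint, at most one summand $g_k$ is nonzero at any given point, so $g$ is well defined: it agrees with the relevant $g_k$ on each $\Delta_k$ and equals $0$ elsewhere. Second, since $\omega(g_k,\delta)\le\omega(\delta)\to 0$ as $\delta\to 0^+$, each $g_k$ is continuous, and as $g_k$ vanishes off $\Delta_k$ it must vanish at the endpoints of $\Delta_k$. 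This endpoint vanishing is precisely what makes the estimates go through.

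First I would split into cases according to where $x$ and $x+h$ lie. If both points lie in the same interval $\Delta_k$, then $|g(x+h)-g(x)|=|g_k(x+h)-g_k(x)|\le\omega(g_k,h)\le\omega(\delta)$ at once, using the hypothesis on $g_k$ together with the monotonicity of $\omega$. If both points lie outside $\bigcup_k\Delta_k$, then $g(x+h)=g(x)=0$ and there is nothing to prove.

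The two remaining cases exploit the endpoint vanishing. Suppose $x\in\Delta_k$ while $x+h$ lies in no interval. Since the increment crosses out of $\Delta_k$, an endpoint $e$ of $\Delta_k$ lies between $x$ and $x+h$, so $|x-e|\le h\le\delta$ and $g_k(e)=0$; hence $|g(x+h)-g(x)|=|g_k(x)|=|g_k(x)-g_k(e)|\le\omega(g_k,|x-e|)\le\omega(\delta)$. Finally, if $x\in\Delta_k$ and $x+h\in\Delta_j$ with $j\neq k$, disjointness forces an endpoint $e_k$ of $\Delta_k$ and an endpoint $e_j$ of $\Delta_j$ to lie between $x$ and $x+h$, each within distance $\delta$ of the corresponding endpoint of the increment; estimating $|g_k(x)|\le\omega(\delta)$ and $|g_j(x+h)|\le\omega(\delta)$ as before and using the triangle inequality gives $|g(x+h)-g(x)|\le 2\omega(\delta)$. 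Collecting all four cases yields the desired bound.

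The argument is essentially elementary; the only point requiring genuine care is the treatment of the interval endpoints. The main obstacle is simply to recognize that continuity together with vanishing off $\Delta_k$ forces $g_k$ to vanish on $\partial\Delta_k$, which is what converts a cross-boundary difference into a bona fide increment of a single $g_k$ over a length at most $\delta$. One should also check that $2\pi$-periodicity does not disturb the gap/endpoint argument when the increment $x+h$ wraps around the circle, but this is handled by the same reasoning applied modulo $2\pi$.
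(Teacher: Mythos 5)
Your proof is correct. Note that the paper itself does not prove this lemma at all --- it is stated with the citation \cite{Osk} and used as a known result of Oskolkov --- so there is no in-paper argument to compare against; what you have supplied is a complete, self-contained elementary proof. The case analysis is sound, and you correctly identify the one genuinely delicate point: that $\omega(g_k,\delta)\le\omega(\delta)\to 0$ forces each $g_k$ to be continuous, hence to vanish at the endpoints of $\Delta_k$ (directly if the endpoint is excluded from $\Delta_k$, by taking limits from outside if it is included), which is what lets you rewrite a cross-boundary difference such as $|g_k(x)|$ as $|g_k(x)-g_k(e)|$ and invoke the hypothesis on $\omega(g_k,\cdot)$. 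Two very minor points: you omit the case $x\notin\bigcup_k\Delta_k$, $x+h\in\Delta_j$, which is symmetric to your third case and handled identically; and your dismissal of the wrap-around situation is a little terse, though periodicity of $g_k$ indeed forces $g_k(0)=g_k(2\pi)=0$ whenever $\Delta_k\neq[0,2\pi]$, so the circle argument does go through. With the constant $2$ you obtain $\omega(g,\delta)\le 2\omega(\delta)$, which is exactly the asserted $\omega(g,\delta)\lesssim\omega(\delta)$.
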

Although the next lemma is presumably already known, the authors do not know of a reference, so a proof is included.
\begin{lemma}\label{integral}
Let $a,b \in \mathbb{N}$ and $a<b$. Then
\begin{equation*}
\int_{\frac{a\pi}{n}}^{\frac{b\pi}{n}} \frac{\sin^{2} nt}{t} dt \geq \frac{1}{12} \sum\limits_{i=a}^{b}\frac{1}{i}.
\end{equation*}
\end{lemma}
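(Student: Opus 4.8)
The plan is to first eliminate the dependence on $n$ via the substitution $u=nt$, which gives $dt=du/n$ and $\sin^2(nt)/t = n\sin^2 u/u$, so that
\begin{equation*}
\int_{\frac{a\pi}{n}}^{\frac{b\pi}{n}} \frac{\sin^{2} nt}{t}\,dt = \int_{a\pi}^{b\pi}\frac{\sin^{2} u}{u}\,du.
\end{equation*}
Thus the quantity is in fact independent of $n$, and it suffices to bound $\int_{a\pi}^{b\pi}\frac{\sin^{2} u}{u}\,du$ from below by $\frac{1}{12}\sum_{i=a}^{b}\frac{1}{i}$.

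Next I would split the range of integration into the consecutive blocks $[i\pi,(i+1)\pi]$ for $i=a,a+1,\dots,b-1$. On each such block $u\le(i+1)\pi$, so estimating $1/u$ by its smallest value gives
\begin{equation*}
\int_{i\pi}^{(i+1)\pi}\frac{\sin^{2} u}{u}\,du \ge \frac{1}{(i+1)\pi}\int_{i\pi}^{(i+1)\pi}\sin^{2} u\,du = \frac{1}{2(i+1)},
\end{equation*}
using the elementary identity $\int_{i\pi}^{(i+1)\pi}\sin^{2} u\,du=\pi/2$ for every integer $i$. Summing over $i$ from $a$ to $b-1$ and reindexing yields the clean lower bound
\begin{equation*}
\int_{a\pi}^{b\pi}\frac{\sin^{2} u}{u}\,du \ge \frac{1}{2}\sum_{i=a+1}^{b}\frac{1}{i}.
\end{equation*}

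Finally I would compare this shifted harmonic sum with the full one, $\sum_{i=a}^{b}\frac{1}{i}=\frac{1}{a}+\sum_{i=a+1}^{b}\frac{1}{i}$. Writing $S=\sum_{i=a+1}^{b}\frac{1}{i}$, the target inequality $\frac{1}{2}S\ge\frac{1}{12}\big(\frac{1}{a}+S\big)$ is equivalent to $5S\ge\frac{1}{a}$; and since $a<b$ forces $S\ge\frac{1}{a+1}\ge\frac{1}{2a}$ (because $a\ge1$), we indeed obtain $5S\ge\frac{5}{2a}\ge\frac{1}{a}$. The only mildly delicate point is this last absorption of the missing first term $\frac{1}{a}$ into the remaining sum, which is precisely where the slack in the constant $\frac{1}{12}$ is spent; the rest is routine. (In fact the same argument gives a sharper constant, but $\frac{1}{12}$ is all that is needed in the sequel.)
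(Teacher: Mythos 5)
Your proof is correct. The substitution $u=nt$ correctly reduces the integral to $\int_{a\pi}^{b\pi}\sin^2(u)/u\,du$, the blockwise bound $\int_{i\pi}^{(i+1)\pi}\frac{\sin^2 u}{u}\,du\ge\frac{1}{2(i+1)}$ is valid (using $\int_{i\pi}^{(i+1)\pi}\sin^2 u\,du=\pi/2$ and $u\le(i+1)\pi$), and the final absorption of the missing term $\frac1a$ via $5S\ge\frac1a$ goes through since $a<b$ gives $S\ge\frac{1}{a+1}\ge\frac{1}{2a}$. This is, however, a genuinely different route from the paper's. The paper splits into two cases according to whether $2a<b$ or $2a\ge b$: in the first case it writes $\sin^2 nt=\frac12(1-\cos 2nt)$, controls the oscillatory part by the numerical estimate $\int_{\pi}^{\infty}\frac{\cos 2s}{s}\,ds<\frac{1}{10}$, and compares $\ln(b/a)$ with the harmonic sum; in the second case it bounds $1/t$ from below by its minimum $n/(b\pi)$ over the entire interval and compares $(b-a)/(2b)$ with the sum. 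Your single unified argument — localizing the crude bound on $1/u$ to each $\pi$-block instead of the whole interval — avoids both the case distinction and the estimate of the cosine integral, and as you note it even yields the sharper constant $\frac16$ in place of $\frac{1}{12}$. The paper's approach buys nothing extra here; yours is shorter and cleaner.
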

\begin{proof}
{\bf Case I.} Suppose that $2a< b$. Then we have $\ln{\frac{b}{a}} > \ln{2}> 0.32$ and
\begin{equation}\label{inn}
\frac{1}{2}\Big(\ln{\frac{b}{a}}-\frac{1}{10}\Big)>\frac{1}{12}\Big(1+\ln{\frac{b}{a}}\Big).
\end{equation}
Also since
$$
\int_{\pi}^{\infty} \frac{\cos 2s}{s} ds<\frac{1}{10},
$$
and 
\begin{align*}\label{simpeqle}
\int_{\frac{a\pi}{n}}^{\frac{b\pi}{n}} \frac{\sin^{2} nt}{t} dt &= \frac{1}{2} \Big( \int_{\frac{a\pi}{n}}^{\frac{b\pi}{n}} \frac{dt}{t}-\int_{\frac{a\pi}{n}}^{\frac{b\pi}{n}} \frac{\cos 2nt}{t} dt\Big)\\[.1in]
&=\frac{1}{2} \Big( \int_{\frac{a\pi}{n}}^{\frac{b\pi}{n}} \frac{dt}{t}-\int_{a\pi}^{b\pi} \frac{\cos 2s}{s} ds\Big),
\end{align*}
by (\ref{inn}) we get
$$
\int_{\frac{a\pi}{n}}^{\frac{2a\pi}{n}} \frac{\sin^{2} nt}{t} dt 
>\frac{1}{2}\Big(\ln{\frac{b}{a}}-\frac{1}{10}\Big)>\frac{1}{12}\Big(1+\ln{\frac{b}{a}}\Big)\geq \frac{1}{12} \sum_{i=a}^{b}\frac{1}{i}.
$$

\medskip
{\bf Case II.} Let $2a \geq b$. Denote $c=b-a\geq 1$. Observe that $3a>c$ and $ac+a+c \leq 3ac$. Therefore 
$$
ac+a+c+c^2\leq 6ac.
$$
Evidently, 
$$
\frac{c+1}{a}\leq \frac{6c}{a+c}
$$
and
$$
\sum\limits_{i=a}^{b}\frac{1}{i} \leq \frac{b-a+1}{a}\leq \frac{6(b-a)}{b}.
$$
Thus we obtain 
$$
\frac{b-a}{2b} \geq  \frac{1}{12} \sum\limits_{i=a}^{b}\frac{1}{i}. 
$$
Now, using the above inequality, it follows that
$$
\int_{\frac{a\pi}{n}}^{\frac{b\pi}{n}} \frac{\sin^{2} nt}{t} dt \geq \frac{n}{b \pi} \int_{\frac{a\pi}{n}}^{\frac{b\pi}{n}} \sin^{2} nt dt=\frac{n}{b \pi}\cdot \frac{1}{2} \Big(\int_{\frac{a\pi}{n}}^{\frac{b\pi}{n}} dt -\int_{\frac{a\pi}{n}}^{\frac{b\pi}{n}} \cos 2nt dt\Big)
$$
$$
=\frac{n}{b \pi}\cdot\frac{(b-a)\pi}{2n} \geq  \frac{1}{12} \sum\limits_{i=a}^{b}\frac{1}{i}.
$$
\end{proof}

The terms $Q_k$ defined in the next lemma naturally appear in proving the equivalence of (ii), (iii), (iv) and (v) in Theorem \ref{convergence} (see next subsection). Although the proof is elementary, we include the details for completeness.

\begin{lemma}\label{Q}
Let $1\leq p <\infty$. Then for each positive integer $k$ we have
$$
1-p^{-1}\leq Q_k\leq 2^{-\frac1{p}}, \ \ \text{where} \ \ Q_k:=1-k+\frac{k^{1+\frac1{p}}}{(k+1)^\frac1{p}}.
$$
\end{lemma}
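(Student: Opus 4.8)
The plan is to regard $Q_k$ as the restriction to the integers of the real function $g(x) = 1 - x + x^{1+1/p}(x+1)^{-1/p}$ on $[1,\infty)$, and to read off both inequalities from the behaviour of $g$. It is convenient first to rewrite
\[
Q_k = 1 - k\left(1 - \left(\frac{k}{k+1}\right)^{1/p}\right) = 1 - \frac{k\big((k+1)^{1/p} - k^{1/p}\big)}{(k+1)^{1/p}},
\]
which isolates the increment $(k+1)^{1/p} - k^{1/p}$ that drives the estimates.

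For the lower bound I would exploit the concavity of $x \mapsto x^{1/p}$, which holds since $1/p \le 1$. The tangent-line inequality at $x=k$ gives $(k+1)^{1/p} - k^{1/p} \le \frac1p\, k^{1/p-1}$, so the subtracted term above is at most $\frac1p\left(\frac{k}{k+1}\right)^{1/p} < \frac1p$. This yields $Q_k > 1 - \frac1p$ immediately, and incidentally shows that the lower bound is never attained but only approached as $k\to\infty$.

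For the upper bound I would show that $g$ is strictly decreasing on $[1,\infty)$, so that $Q_k = g(k) \le g(1) = 2^{-1/p}$, with equality precisely at $k=1$. A direct differentiation gives
\[
g'(x) = -1 + \left(\frac{x}{x+1}\right)^{1/p}\left(1 + \frac{1}{p(x+1)}\right),
\]
so, putting $r = \frac{1}{x+1} \in (0,1)$, the claim $g'(x) < 0$ becomes $(1-r)^{1/p}\big(1 + \tfrac{r}{p}\big) < 1$. I would prove this last inequality by observing that $\phi(r) := \frac1p \log(1-r) + \log\!\big(1 + \tfrac rp\big)$ satisfies $\phi(0) = 0$ and
\[
\phi'(r) = -\frac{r(p+1)}{p^{2}(1-r)\big(1 + \tfrac rp\big)} < 0 \qquad (0 < r < 1),
\]
whence $\phi(r) < 0$ and the desired bound follows by exponentiating.

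The main obstacle is the upper bound. The naive pointwise estimates—Bernoulli's inequality $\big(1 - \tfrac{1}{k+1}\big)^{1/p} \le 1 - \tfrac{1}{p(k+1)}$, equivalently the tangent-line estimate at the right endpoint—only give $Q_k \le 1 - \tfrac{k}{p(k+1)}$, which already fails to beat $2^{-1/p}$ at $k=1$, $p=2$. Because equality in the upper bound occurs exactly at $k=1$, no single pointwise inequality can suffice; a genuine monotonicity argument is needed, and this is precisely what the sign analysis of $g'$ provides.
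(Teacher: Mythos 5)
Your proof is correct and follows essentially the same route as the paper: both regard $Q_k$ as the value at $t=k$ of the real function $Q(t)=1-t+t^{1+1/p}(t+1)^{-1/p}$, compute the identical derivative, and deduce the upper bound from monotonicity, $Q_k\le Q(1)=2^{-1/p}$. The only cosmetic differences are that you certify $Q'<0$ via the substitution $r=1/(x+1)$ and a log-derivative computation (where the paper instead shows the factor $R(t)$ increases to $1$), and you get the lower bound from a direct tangent-line estimate for $x\mapsto x^{1/p}$ rather than from the limit $Q(t)\to 1-p^{-1}$ of the decreasing function; both variants are sound.
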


\begin{proof}
We first show that the $Q_k$ are bounded by $2^{-\frac1{p}}$. To this end, we define 
$$
Q(t):=1-t+\frac{t^{1+\frac1{p}}}{(t+1)^\frac1{p}},\quad (t\geq 1),
$$
so that $Q(k)=Q_k$, $k\geq 1$. One can verify that
$$
Q'(t)=R(t)-1, \ \ \ \text{where} \ \ \ R(t)=\left(\frac{t}{t+1}\right)^{\frac1{p}}\left(1+\frac1{p(t+1)}\right),
$$
and 
$$
R'(t)=\frac1{p}\left(1+\frac1{p}\right)\left(\frac{t^{\frac1{p}-1}}{(t+1)^{2+\frac1{p}}}\right)>0, \ \ \ t\geq 1.
$$
Since $R(t)$ is increasing and $R(t)\rightarrow 1$ as $t\rightarrow \infty$, we conclude that $R(t)\leq 1$ and hence $Q'(t)\leq 0$, which in turn implies that $Q(t)$ is decreasing. As a consequence, $Q(t)$ attains its maximum at $t=1$, thus $Q_k\leq Q_1=2^{-\frac1{p}}$ for all $k$. On the other hand, since $Q(t)\rightarrow 1-\frac1{p}$ as $t\rightarrow\infty$, we infer that $Q_k\geq 1-\frac1{p}$ for all $k$.
\end{proof}

\medskip
\subsection{Proof of Theorem \ref{convergence}}

We begin with the difficult part whose proof amounts to constructing a function $f_0$ in $H^\omega\cap V_p[\nu]$ that is closely connected to the sequence $\sigma$.

{\bf $\text{(i)}\Rightarrow\text{(ii)}$.} To prove this part, we first define a sequence $\{f_k\}$ of $2\pi$-periodic functions as follows:
$$ f_k(x):=\begin{cases}
 \sin(l_k x)\omega\Big(\frac{1}{l_k}\Big),  &\textmd{if}\ x\in \Big[\frac{\psi(l_k)}{l_k}\pi,\frac{\mu(l_k)}{l_k}\pi\Big];  \\ \\
\sin(l_k x)\chi(l_k)\varepsilon_p(r), &\textmd{if}\ x\in \Big[\frac{r}{l_k}\pi,\frac{(r+1)}{l_k}\pi\Big];\ r=\mu(l_k)+1,\dots, M(l_k);\\ \\
0, &\textmd{otherwise in}~  [0,2\pi].
\end{cases}
$$
Then, since $\varphi(n)=o(n)$ by Lemma \ref{Lemma 4.5}(a), the $f_k$ have disjoint supports and so the function $f_0:=\sum\limits_{k=1}^{\infty} f_k$
is well-defined. Now, it is enough to show that $f_0\in H^\omega\cap V_p[\nu]$ while there exist a subsequence $\{m_i\}$ of $\{l_k\}\subseteq\{n_i\}$ and a positive integer $N$ such that $\sigma(m_i)\lesssim|f_0(0)-S_{m_i}(f_0,0)|$ for $i\geq N$ ($S_n(x):=S_n(f_0,x)$ designates the $n$th partial sum of the Fourier series of $f_0$). Towards this end, we need to verify the claims made below.

\medskip
\noindent{\bf Claim I:} $f\in H^\omega$.

\medskip
Let $\theta(l_k) \leq \varphi(l_k)$. If $x\in \Big[\frac{\psi(l_k)}{l_k}\pi,\frac{\theta(l_k)}{l_k}\pi\Big]$, then  $\omega(f_k,\delta) \leq \omega(\delta)$. Now let $x\in \Big[\frac{r\pi}{l_k},\frac{(r+1)\pi}{l_k}\Big]$ and $r=\theta(l_k),\dotsb ,\varphi(l_k)-1$. Applying Lemma \ref{thetaprop} for $r=\theta(l_k)+1,\dotsb ,\varphi(l_k)-1$ along with the fact that $\varepsilon_p(r)\downarrow$, we get
\begin{equation}\label{estnu1}
 \varepsilon_p(r)\leq \varepsilon_p(\theta(l_k)+1)\leq \frac{\nu(\theta(l_k)+1)}{(\theta(l_k)+1)^\frac1{p}}\leq \omega\Big(\frac{1}{l_k}\Big).
\end{equation}
Thus  $\omega(f_k,\delta) \leq 2 \omega(\delta)$ for $0 \leq \delta \leq \pi$. By Lemma \ref{Oskol} and similar arguments as in \cite{Osk}, we observe that $f_0\in H^\omega$. Likewise, one can verify that the above argument is valid when $\theta(l_k) > \varphi(l_k)$.

\medskip
\noindent{\bf Claim II:} $f\in V_p[\nu]$.

\medskip
We prove the claim for the case $\theta(l_k)\leq \phi(l_k)$; the proof of the case $\theta(l_k)> \phi(l_k)$ is similar. Evidently by definition of $\varphi$ and $\psi$ we have
\begin{equation}\label{distance}
\varphi(l_k)-\psi(l_k) > \frac{2}{\omega\big(\frac{1}{l_k}\big)}-\frac{1}{\omega\big(\frac{1}{l_k}\big)}=\frac{1}{\omega\big(\frac{1}{l_k}\big)}\rightarrow\infty.
\end{equation}
Thus $\sum\limits_{k=1}^{\infty} (\varphi(l_k)-\psi(l_k))=\infty$.
Applying \eqref{distance} and Lemma \ref{Lemma 4.5}(c), we obtain
\begin{align}\label{phiminpsi}
\varphi(l_k)-\psi(l_k)\nonumber&<\psi(l_{k+1})-\psi(l_k)\\[.1in]
\nonumber&<\psi(l_{k+1})<\omega\left(\frac{1}{l_{k+1}}\right)^{-1}\\[.1in]
&<\varphi(l_{k+1})-\psi(l_{k+1}).
\end{align}

Applying (\ref{estnu1}) and Lemma \ref{Lemma 4.5}(b) yields 
 \begin{align}\label{infsupfkfk+1}
  \omega\left(\frac{1}{l_k}\right) \nonumber&\
  \geq \varepsilon_p(\theta(l_k)+1)\geq \varepsilon_p(\phi(l_k))\\[.1in]
  &\geq \omega\left(\frac{1}{l_{k+1}}\right) \geq \varepsilon_p(\theta(l_{k+1})+1).
 \end{align}
At the same time, for all $m\in \mathbb{N}$, we have
\begin{equation}\label{nupp-rel1}
\upsilon_p(\varphi(l_k)-\psi(l_k)+m,f_k)=\upsilon_p(\varphi(l_k)-\psi(l_k),f_k).
\end{equation}
Also, if $m< \varphi(l_k)-\psi(l_k)$, then by \eqref{phiminpsi} and  \eqref{infsupfkfk+1} we get 
\begin{equation}\label{nupp-rel2}
\upsilon_p(m,f_k) \geq \upsilon_p(m,f_{k+1}).
\end{equation}

Now we estimate the modulus of variation of $f_0$. Let $n$ be large enough. There exists $k_0$ such that
$$
n_0:=\sum_{k=1}^{k_0-1} (\varphi(l_k)-\psi(l_k))\leq n <\sum_{k=1}^{k_0} (\varphi(l_k)-\psi(l_k)).
$$
Using \eqref{nupp-rel1}, for $1\leq k\leq k_0-1$, we obtain
\begin{equation}\label{nupp-rel3}
\upsilon_p(\varphi(l_k)-\psi(l_k)+n-n_0,f_k)=\upsilon_p(\varphi(l_k)-\psi(l_k),f_k).
\end{equation}
On the other hand, it follows from \eqref{phiminpsi} that 
\begin{equation}\label{uppern0}
n-n_0 < \varphi(l_{k_0+1})-\psi(l_{k_0+1}).
\end{equation}
In view of \eqref{nupp-rel2} and \eqref{uppern0}, we conclude that
\begin{equation}\label{nupp-rel4}
\upsilon_p(n-n_0,f_k) \geq \upsilon_p(n-n_0,f_{k+1}), \ \ \ k\geq k_0+1.
\end{equation}
Hence, due to \eqref{nupp-rel1}, \eqref{nupp-rel3} and \eqref{nupp-rel4}, it follows that
\begin{align}\label{Cha31}
\upsilon_p(n,f_0) & \nonumber=\upsilon_p\Big(n-n_0+\sum_{k=1}^{k_0-1} (\varphi(l_k)-\psi(l_k)),f_0\Big)\\[.1in]
&\leq \sum_{k=1}^{k_0-1} \upsilon_p\big(\varphi(l_k)-\psi(l_k),f_k\big) + \upsilon_p(n-n_0,f_{k_0}).
\end{align}
 
To simplify notation, we omit $l_k$ in $\theta(l_k)$, $\psi(l_k)$ and $\varphi(l_k)$ which should not cause any ambiguity. Suppose $\theta < \varphi$. 
We get 
\begin{align*}
\upsilon_p \Big(\varphi-\psi,f_k \Big) &\leq 2\omega\left(\frac{1}{l_k}\right)\Big( \theta-\psi \Big)^{\frac{1}{p}} + 2 \Big(  \sum_{r=\theta+1}^{\varphi}\varepsilon_p(r)^p \Big)^{\frac{1}{p}}\\[.1in]
&=2\omega\left(\frac{1}{l_k}\right) \Big(\theta-\psi\Big)^{\frac{1}{p}} + 2\Big(\nu(\varphi)^p-\nu(\theta)^p\Big)^{\frac{1}{p}}\\[.1in]
&\leq 2\omega\left(\frac{1}{l_k}\right) \theta^{\frac{1}{p}}+2\nu(\varphi)\leq 4\nu(\varphi),
\end{align*}
where the last inequality follows from Lemma \ref{thetaprop}.

It follows from Lemma \ref{definitionphipsi}(c)  that
$$
4 \nu(\psi)<\nu(\theta)< \nu(\varphi)
$$
or
$$
\nu(\varphi)<2(\nu(\varphi)-\nu(\psi)).
$$

Hence
\begin{equation}\label{Cha33}
\upsilon_p \Big(\varphi(l_k)-\psi(l_k) , f_k    \Big) \leq 8 \Big(\nu(\varphi(l_k))-\nu(\psi(l_k))  \Big).
\end{equation}
A similar argument is valid when $\theta > \varphi$.\\ 

\noindent {\bf Estimation of $\upsilon_p(n-n_0,f_{k_0})$:} We deal with the following three cases.\\

\underline{Case 1}:  $\theta(l_{k_0})< \varphi(l_{k_0})$ and $n-n_0 \leq \theta(l_{k_0})-\psi(l_{k_0})$;\\

\underline{Case 2}:  $\theta(l_{k_0})< \varphi(l_{k_0})$ and  $n-n_0 > \theta(l_{k_0})-\psi(l_{k_0})$;\\

\underline{Case 3}:  $\theta(l_{k_0}) \geq \varphi(l_{k_0})$.\\

\noindent As Lemma \ref{Lemma 4.5}(c) readily yields $n_0<\psi(l_{k_0})$, in Case 1 we have
\begin{equation}\label{nlessm0}
n \leq \theta(l_{k_0})-\psi(l_{k_0})+n_0< \theta(l_{k_0}).
\end{equation}
\noindent By abuse of notation, we write $\varphi:=\varphi(l_{k_0})$, $\theta:=\theta(l_{k_0})$ and $\psi:=\psi(l_{k_0})$ for the sake of simplicity. Now, by Lemma \ref{thetaprop} we obtain
\begin{align*}
\upsilon_p(n-n_0,f_{k_0}) &\leq  \Big( \sum_{r=1}^{n-n_0} \Big(2\omega \Big(\frac{1}{l_{k_0}}\Big)\Big)^p  \Big)^{\frac{1}{p}}\\[.1in]
&\leq  2\omega \Big(\frac{1}{l_{k_0}}\Big)\Big( n-n_0  \Big)^{\frac{1}{p}}\\[.1in]
&\leq 2\omega \Big(\frac{1}{l_{k_0}}\Big) n^{\frac{1}{p}}\leq 2 \frac{\nu(\theta)}{\theta^{\frac{1}{p}}}  n^{\frac{1}{p}}.
\end{align*}
Also, by \eqref{nlessm0} and taking into account the fact that $\frac{\nu(r)}{r^{\frac1{p}}}\downarrow$, we get
\begin{equation}\label{Cha34}
\upsilon_p(n-n_0,f_{k_0}) \leq 2 \frac{\nu(\theta)}{\theta^{\frac{1}{p}}}  n^{\frac{1}{p}}\leq 2 \frac{\nu(n)}{n^{\frac{1}{p}}}  n^{\frac{1}{p}}=2\nu(n).
\end{equation}

Let us now consider Case 2, that is, when $\theta< \varphi$ and  $n-n_0 > \theta-\psi$. First note that by Lemma \ref{definitionphipsi}(c), we have $\psi^2 < \theta$. As a result,
\begin{equation}\label{psi<n}
n > \theta-\psi+n_0 >  \psi^2-\psi+n_0> \psi.
\end{equation}
Recalling the definition of $f_{k_0}$, we observe that

\begin{align*}
\Big(\upsilon_p(n-n_0,f_{k_0})\Big)^p &\leq 2^p \omega\Big(\frac{1}{l_{k_0}}\Big)^p (\theta-\psi)+2^p \sum_{r=\theta+1}^{n-n_0+\psi}\varepsilon_p(r)^p\\[.1in]
&\hspace{0.1cm}= 2^p\omega\Big(\frac{1}{l_{k_0}}\Big)^p (\theta-\psi)+ 2^p \Big(\nu(n-n_0+\psi)^p-\nu(\theta)^p\Big)\\[.1in]
&\hspace{-.2cm}\overset{\text{(Lemma \ref{thetaprop})}}{\leq}  2^p \frac{\nu(\theta)^p}{\theta} \theta + 2^p\Big(\nu(n-n_0+\psi)^p-\nu(\theta)^p\Big)\\[.1in]
&\hspace{.3cm}= 2^p\nu(n-n_0+\psi)^p.
\end{align*}
Thus, by (\ref{psi<n}) one obtains
\begin{align}\label{Cha35}
\upsilon_p(n-n_0,f_{k_0})\nonumber&\leq 2\nu(n-n_0+\psi)\\[.1in]
\nonumber&\leq 2\nu(n)+2\nu(\psi)\\[.1in]
&\leq 4\nu(n).
\end{align}

Now we turn to Case 3. In this case, due to the choice of $k_0$, we see that 
\begin{align*}
n-n_0 &< \varphi(l_{k_0})-\psi(l_{k_0})\\[.1in]
&\leq \theta(l_{k_0})-\psi(l_{k_0})
\end{align*}
With an argument similar to that of Case 1, it follows  that
\begin{equation}\label{Cha35+1}
\upsilon_p(n-n_0,f_{k_0}) <2\nu(n).
\end{equation}
Applying (\ref{Cha33}), (\ref{Cha34}), (\ref{Cha35}) and (\ref{Cha35+1}), we obtain from (\ref{Cha31}) that
$$
\upsilon_p(n,f_0) \lesssim \nu(n)+\sum_{k=1}^{k_0-1} \nu(\varphi(l_k)-\psi(l_k)).
$$
But note that
$$
 \aligned\sum_{k=1}^{k_0-1} \nu(\varphi(l_k))-\nu(\psi(l_k)) &\leq \sum_{k=1}^{k_0-1} \nu(\varphi(l_k))-\nu(\varphi(l_{k-1}))\\
&\leq\nu(\varphi(l_{k_0-1}))\leq \nu( \varphi(l_{k_0-1})-\psi(l_1))+ \nu(\psi(l_1)).\endaligned
$$
Since $\varphi(n)\geq 2 \psi(n)$ for all $n$, we have

$$
\psi (l_{k_0-1})\leq \varphi(l_{k_0-1}) - \psi (l_{k_0-1}).
$$

As a result, we get 
$$
\psi(l_1)\leq\psi (l_{k_0-1}) \leq \sum_{k=1}^{k_0-1}\varphi(l_k)-\psi(l_k) <n,
$$
and thus

$$
\aligned\nu \Big( \varphi(l_{k_0-1})-\psi(l_1) \Big) &\leq  \nu \Big( \varphi(l_{k_0-1})-\psi(l_1)+ \sum_{k=2}^{k_0-2} \varphi(l_k)-\psi(l_k) \Big)
\\
&\leq\nu \Big( \psi (l_{k_0-1})+\sum_{k=1}^{k_0-1} \varphi(l_k)-\psi(l_k) \Big)\\
&\leq \nu \Big(2  \sum_{k=1}^{k_0-1} \varphi(l_k)-\psi(l_k) \Big)\\ 
&\leq 2 \nu(n).\endaligned
$$
Consequently,
$$
\sum_{k=1}^{k_0-1} \nu(\varphi(l_k))-\nu(\psi(l_k))\leq 3\nu(n).
$$
Therefore $f_0 \in V_p[\nu]$.

\medskip
\noindent{\bf Claim III:} There exist a subsequence $\{m_i\}$ of $\{l_k\}\subseteq\{n_i\}$ and a positive integer $N$ such that $\sigma(m_i)\lesssim|f_0(0)-S_{m_i}(f_0,0)|$ for $i\geq N$.

\medskip
First of all, note that if the claim holds true, then since by hypothesis we have $|f_0(0)-S_{m_i}(f_0,0)|\rightarrow0$, we conclude that $\sigma(m_i)\rightarrow0$ as $i\rightarrow\infty$. But the convergence of $\{\sigma(n_i)\}$ implies
$$
\limsup_{n\rightarrow\infty}\sigma(n)=\lim_{i\rightarrow\infty} \sigma(n_i)=\lim_{i\rightarrow\infty} \sigma(m_i)=0.
$$
This means that $\sigma(n)\rightarrow0$ as $n\rightarrow\infty$.

\medskip
In order to see the claim, it suffices to show that
\begin{equation}\label{disconvergence}
\limsup_{k \to \infty} \frac{\left|f_0(0)-S_{l_k}(f_0,0)\right|}{\sigma(l_k)}=\alpha>0,
\end{equation}
since in that case, we may find a subsequence $\{m_i\}$ of $\{l_k\}$ such that
$$
\lim_{i \to \infty} \frac{\left|f_0(0)-S_{m_i}(f_0,0)\right|}{\sigma(m_i)}=\alpha.
$$
This implies the existence of a positive integer $N$ such that
$$
\frac{\alpha}{2}\cdot\sigma(m_i)<\left|f_0(0)-S_{m_i}(f_0,0)\right|, \ \ \ \text{for} \ i\geq N.
$$

Let us now prove \eqref{disconvergence}. It is well known (see, e.g.,  \cite[p. 101]{Bar}) that for any $2\pi$-periodic continuous function $f(x)$, we have the estimate 
 \begin{equation}\label{4.26}
   S_n(f,x)- \widetilde{S}_n(f,x)= O\Big(\omega\Big(\frac{\pi}{n},f\Big)\Big),
 \end{equation}
 where 
 $$
 \widetilde{S}_n(f,x)=\frac{1}{\pi} \int_{-\pi}^{\pi}f(x+t) \frac{\sin nt}{t} dt.
 $$
Hence, taking into account the fact that $f_0(0)=0$, we get 
 \begin{equation*}\label{modbari}
\left|S_{l_k}(f_0,0)-f_0(0) \right|> \left|\widetilde{S}_{l_k}(f_0,0)-f_0(0) \right|- C\cdot \omega\Big(\frac{1}{l_k},f_0\Big),
\end{equation*}
where $C$ is a positive constant. Applying Lemma \ref{integral} with $n:=l_k$, $a:=\psi(l_k)$ and $b:=\theta(l_k)$ yields

$$
\omega\Big(\frac{1}{l_k}\Big) \int_{\frac{\psi(l_k) \pi}{l_k}}^{\frac{\theta (l_k)\pi}{l_k}} \frac{\sin^{2} l_kt}{t} dt \geq \frac{1}{12} \omega\Big(\frac{1}{l_k}\Big)  \sum_{i=\psi(l_k)}^{\theta(l_k)}\frac{1}{i}.
$$
Also, since
$$
\int_{\frac{r\pi}{l_k}}^{\frac{(r+1)\pi}{l_k}} \frac{\sin^{2} l_kt}{t} dt > \frac{1}{12r},
$$
we obtain
$$
\aligned\widetilde{S}_{l_k}(f_k,0)&=\frac{1}{\pi} \int_{0}^{2\pi}f_k(t) \frac{\sin l_kt}{t} dt\\
&=\frac{1}{\pi} \left\{\omega\left(\frac1{l_k}\right)\int_{\frac{\psi(l_k) \pi}{l_k}}^{\frac{\theta (l_k)\pi}{l_k}}\frac{\sin^{2} l_kt}{t} dt+\sum_{r=\theta(l_k)+1}^{\phi(l_k)} \varepsilon_p(r)\int_{\frac{r\pi}{l_k}}^{\frac{(r+1)\pi}{l_k}}\frac{\sin^{2} l_kt}{t} dt\right\}\\
&\geq \frac{1}{12\pi}\left\{\omega\left(\frac1{l_k}\right)\sum_{i=\psi(l_k)}^{\theta(l_k)}\frac{1}{i}+\sum_{r=\theta(l_k)+1}^{\phi(l_k)} \frac{\varepsilon_p(r)}{r}\right\}=\frac{1}{12\pi}\bar{\sigma}(l_k).\endaligned
$$

Now define
$$ 
G_k(x):=\sum_{s=1}^{k-1} f_s(x)
$$
and 
$$
F_k(x):=\sum_{s=k+1}^{\infty} f_s(x).
$$
The rest of the proof follows along the lines of the last part of the proof of \cite[Theorem 3, p. 492]{Cha2}. So we omit the details and we only give a sketch of proof.

As $G_k$ is of bounded variation, by making use of an estimate of Stechkin (see \cite[Inequality (7)]{Osk2}) along with \eqref{4.26} and Lemma \ref{Lemma 4.5}(e) (taking $\bar{c}=1/36\pi\tilde{c}$), we get
\begin{align*}
\widetilde{S}_{l_k}(G_k,0)&\leq \tilde{c} A_k\ln\left(A_k^{-1}\sum_{i=1}^{k-1}\omega\left(\frac{1}{l_i}\right)\varphi(l_i)\right)\\[.1in]
&<\bar{c} \ \bar{\sigma}(l_k)<\frac{1}{36\pi}\bar{\sigma}(l_k).
\end{align*}

In addition, since $\Big(0,\frac{\psi(l_k)\pi}{l_k}\Big)$ contains the support of $F_k$, Lemma \ref{Lemma 4.5}(d) implies
$$
 \widetilde{S}_{l_k}(F_k,0)<\frac{1}{36\pi}\bar{\sigma}(l_k),
$$
Thus we arrive at
$$
\left|\widetilde{S}_{l_k}(f_0,0)\right|\geq \left| \widetilde{S}_{l_k}(f_k,0) \right|-\left| \widetilde{S}_{l_k}(F_k,0) \right|-\left| \widetilde{S}_{l_k}(G_k,0) \right| \geq \frac{1}{36\pi}\bar{\sigma}(l_k).
$$
By applying Lemma \ref{lemma 4.4}, we obtain
$$
\frac{\left|S_{l_k}(f_0,0)-f_0(0) \right|}{\sigma(l_k)}>C_1 - C_2\frac{\omega\Big(\frac{1}{l_k},f_0\Big)}{\sigma(l_k)},
$$
while $f_0\in H^{\omega}$ implies
$$
\frac{\left|S_{l_k}(f_0,0)-f_0(0) \right|}{\sigma(l_k)}>C_3 - C_4\frac{\omega\Big(\frac{1}{l_k}\Big)}{\sigma(l_k)},
$$
where $C_i>0$ ($i=1,\cdots,4$) are positive constants. Note that by Lemma \ref{lemma 4.4}(b), it follows that $\omega(1/l_k)/\sigma(l_k)\rightarrow0$ as $k\rightarrow\infty$, and hence
$$
\limsup_{k\rightarrow\infty} \frac{\left|S_{l_k}(f_0,0)-f_0(0) \right|}{\sigma(l_k)}>0.
$$
This completes the proof. 

\medskip
\medskip
\medskip
{\bf $\text{(ii)}\Rightarrow\text{(i)}$.}
Denote
\begin{equation}\label{2etI}
I:= \sum_{k=1}^{n} \frac{(-1)^k(2n+1)}{2 \pi^2 k} \int_{0}^{\frac{\pi}{2n+1}} \sin\Big(\frac{2n+1}{2}t\Big) \left\{\sum_{r=0}^1 f(\xi_{k,r}+t)-f(\xi_{k,r}-t)\right\} dt,
\end{equation}
where
$$
\xi_{k,r}:=x+(-1)^r \frac{2k\pi}{2n+1}. 
$$
In \cite{Nik}, Nikol'skii found the estimate
\begin{equation*}\label{2etn}
f(x)-S_n(f,x)\lesssim I + \omega\Big(\frac{1}{n},f\Big).
\end{equation*}

It is well-known that for all $n \in  \mathbb{N}$, using subadditivity of the modulus of continuity $\omega(t)$, we have $\omega(nt) \leq n \omega(t)$. Thus for $t \in \big[0,\frac{\pi}{2n+1}\big]$, one has
\begin{equation}\label{2et22}
\Big| f(\xi_{k,r}+t)-f(\xi_{k,r}-t) \Big|  \lesssim \omega\Big(\frac{1}{n},f\Big), \ \ \ r=0,1. 
\end{equation}
Further, suppose $\left|\sum_{r=0}^1 f(\xi_{k,r}+t)-f(\xi_{k,r}-t)\right|$ attains its maximum on $\big[0,\frac{\pi}{2n+1}\big]$ at $t_k$. With this in mind, (\ref{2etI}) yields

\begin{equation}\label{2et24}
|I|  \lesssim \sum_{k=1}^{n} \frac{1}{k} \left|\sum_{r=0}^1 f(\xi_{k,r}+t_k)-f(\xi_{k,r}-t_k)\right|.
\end{equation}
By (\ref{2et22}), and using Abel's transformation \eqref{Abel} with
$$
x_k:=\frac1{k} \ \ \ \text{and} \ \ \ y_k:=\Big| f(\xi_{k,r}+t_k)-f(\xi_{k,r}-t_k) \Big|,
$$
we get
\begin{align*}\label{2et25}
|I|  \lesssim \sum_{k=1}^{\theta} \frac{1}{k}\omega\Big(\frac{1}{n},f\Big) +\sum_{r=0}^1 \sum_{k=\theta+1}^{n-1} \frac{1}{k^2} \sum_{j=1}^k \Big| f(\xi_{j,r}+t_j)-f(\xi_{j,r}-t_j) \Big|\\[.1in]
&\hspace{-7.6cm}+\sum_{r=0}^1 \frac{1}{n} \sum_{j=1}^n \Big| f(\xi_{j,r}+t_j)-f(\xi_{j,r}-t_j) \Big|.
\end{align*}
Now an application of H\"{o}lder's inequality yields
\begin{align*}
|I|  \lesssim \sum_{k=1}^{\theta} \frac{1}{k}\omega\Big(\frac{1}{n},f\Big) +\sum_{r=0}^1 \sum_{k=\theta+1}^{n-1} \frac{1}{k^{1+\frac1{p}}} \left\{\sum_{j=1}^k \Big| f(\xi_{j,r}+t_j)-f(\xi_{j,r}-t_j) \Big|^p\right\}^{\frac1{p}}\\[.1in]
&\hspace{-9.1cm}+\sum_{r=0}^1 \frac{1}{n^{\frac1{p}}} \left\{\sum_{j=1}^n \Big| f(\xi_{j,r}+t_j)-f(\xi_{j,r}-t_j) \Big|^p\right\}^{\frac1{p}}.
\end{align*}

Since for $t_k \in \big[0, \frac{\pi}{n+1}\big]$, $k=1,\cdots,n$ ($k\neq \theta$),
$$
\Big(\xi_{k,r}-t_k,\xi_{k,r}+t_k\Big) \bigcap \Big(\xi_{\theta,r}-t_\theta,\xi_{\theta,r}+t_\theta\Big)= \emptyset,
$$
it follows from the definition of the modulus of $p$-variation that
\begin{equation*}\label{2et27}
\sum_{k=1}^{\theta} \Big|f(\xi_{k,r}+t_k)-f(\xi_{k,r}-t_k)\Big|^p \leq (\upsilon_p(\theta,f))^p.
\end{equation*}

As a result, if $f\in H^\omega\cap V_p[\nu]$, applying Abel's transformation and Lemma \ref{epsionprop}(iii) yields
\begin{align*}
\|f(x)-S_n(f,x)\|_{C(0,2\pi)} &\lesssim \omega\Big(\frac{1}{n},f\Big) \sum_{k=1}^{\theta} \frac{1}{k} + \sum_{k=\theta+1}^{n-1}\frac{\upsilon_p(k)}{k^{1+\frac{1}{p}}}+\frac{\upsilon_p(n)}{n^\frac{1}{p}}\\[.1in]
&\lesssim \omega\Big(\frac{1}{n}\Big) \sum_{k=1}^{\theta} \frac{1}{k} + \sum_{k=\theta+1}^{n-1}\frac{\nu(k)-\nu(k-1)}{k^{\frac{1}{p}}}+\frac{\nu(n)}{n^\frac{1}{p}}\\[.1in]
&\lesssim \omega\Big(\frac{1}{n}\Big) \sum_{k=1}^{\theta} \frac{1}{k} + \sum_{k=\theta+1}^{n-1}\frac{\varepsilon_p(k)}{k}+\frac{\nu(n)}{n^\frac{1}{p}}=\sigma(n)+\frac{\nu(n)}{n^\frac{1}{p}}.
\end{align*}
Since $\nu(n)=o(n^\frac{1}{p})$, we conclude that $\|f(x)-S_n(f,x)\|_{C(0,2\pi)}\rightarrow 0$ as $n\rightarrow 0$.

{\bf $\text{(ii)}\Leftrightarrow\text{(iii)}\Leftrightarrow\text{(iv)}\Leftrightarrow\text{(v)}$.}
By applying Abel's transformation we obtain 
\begin{align*}
\sum_{k=\theta+1}^{n-1}\frac{\nu(k)-\nu(k-1)}{k^{\frac{1}{p}}}&=\sum_{k=\theta+1}^{n-2}\Delta\big(k^{-\frac1{p}}\big) \sum_{j=\theta+1}^{k}\nu(j)-\nu(j-1)+\sum_{j=\theta+1}^{n-1}\frac{\nu(j)-\nu(j-1)}{(n-1)^{\frac{1}{p}}}\\
&= \sum_{k=\theta+1}^{n-2}\Delta\big(k^{-\frac1{p}}\big)(\nu(k)-\nu(\theta)) + \frac{\nu(n-1)-\nu(\theta)}{(n-1)^{\frac{1}{p}}}\\[.1in]
&= \sum_{k=\theta+1}^{n-2}\Delta\big(k^{-\frac1{p}}\big)\nu(k)- \sum_{k=\theta+1}^{n-2}\Delta\big(k^{-\frac1{p}}\big)\nu(\theta)+\frac{\nu(n-1)-\nu(\theta)}{(n-1)^{\frac{1}{p}}}\\[.1in]
&=\sum_{k=\theta+1}^{n-2}\Delta\big(k^{-\frac1{p}}\big)\nu(k)- \frac{\nu(\theta)}{(\theta+1)^{\frac{1}{p}}} + \frac{\nu(\theta)}{(n-1)^{\frac{1}{p}}}+\frac{\nu(n-1)-\nu(\theta)}{(n-1)^{\frac{1}{p}}}\\[.1in]
& =\sum_{k=\theta+1}^{n-1}\Delta\big(k^{-\frac1{p}}\big)\nu(k)+\frac{\nu(n-1)}{n^{1/p}} - \frac{\nu(\theta)}{(\theta+1)^{\frac{1}{p}}}.
\end{align*}
This implies $\text{(iv)}\Leftrightarrow\text{(v)}$. Furthermore,
\begin{equation}\label{equi1}
\sum_{k=\theta+1}^{n-1}\frac{\nu(k)}{k^{1+\frac{1}{p}}}=\frac{\nu(\theta)}{(\theta+1)^{\frac{1}{p}}}-\frac{\nu(n-1)}{n^{1/p}}+\sum_{k=\theta+1}^{n-1}\frac{\nu(k)-\nu(k-1)}{k^{\frac{1}{p}}}+\sum_{k=\theta+1}^{n-1}\frac{\nu(k)}{ k^{1+\frac{1}{p}}}Q_k,
\end{equation}
where $Q_k$ is defined as in Lemma \ref{Q}, using which we obtain
\begin{equation}\label{milad}
(1-p^{-1})\sum_{k=\theta+1}^{n-1}\frac{\nu(k)}{ k^{1+\frac{1}{p}}}<\sum_{k=\theta+1}^{n-1}\frac{\nu(k)}{ k^{1+\frac{1}{p}}}Q_k< 2^{-\frac1{p}}\sum_{k=\theta+1}^{n-1}\frac{\nu(k)}{k^{1+\frac{1}{p}}}.
\end{equation}
Combining these inequalities with \eqref{equi1}, we see that $\text{(iii)}\Leftrightarrow\text{(iv)}$.

To conclude, let us verify $\text{(ii)}\Leftrightarrow\text{(iii)}$. Applying Abel's transformation and H\"{o}lder's inequality yields
\begin{align}\label{ccc}
\nonumber\sum_{k=\theta+1}^{n-1} \frac{\varepsilon_p(k)}{k}&=\sum_{k=\theta+1}^{n-2}\frac1{k(k+1)}\sum_{j=\theta+1}^k \varepsilon_p(j)+\frac1{n-1}\sum_{j=\theta+1}^{n-1} \varepsilon_p(j)\\[.1in]
\nonumber&\leq\sum_{k=\theta+1}^{n-2}\frac1{k(k+1)}\Big(\sum_{j=\theta+1}^k \varepsilon_p(j)^p\Big)^{\frac1{p}}k^{1-\frac1{p}}+\frac1{n-1}\Big(\sum_{j=\theta+1}^{n-1} \varepsilon_p(j)^p\Big)^{\frac1{p}}(n-1)^{1-\frac1{p}}\\[.1in]
\nonumber&=\frac{\big(\nu(n-1)^p-\nu(\theta)^p\big)^{\frac1{p}}}{(n-1)^{\frac1{p}}}+\sum_{k=\theta+1}^{n-2}\frac{\big(\nu(k)^p-\nu(\theta)^p\big)^{\frac1{p}}}{k^{\frac1{p}}(k+1)}\\[.1in]
&\leq\frac{\nu(n-1)}{(n-1)^{\frac1{p}}}+\sum_{k=\theta+1}^{n-1}\frac{\nu(k)}{k^{1+\frac1{p}}}.
\end{align}
Also, using \eqref{equi1}, \eqref{milad} and Lemma \ref{epsionprop}(iii) we get
\begin{align}\label{cdd}
\sum_{k=\theta+1}^{n-1} \frac{\nu(k)}{k^{1+\frac1{p}}}\nonumber&\lesssim \frac{\nu(\theta)}{(\theta+1)^\frac1{p}}-\frac{\nu(n-1)}{n^\frac1{p}}+\sum_{k=\theta+1}^{n-1}\frac{\nu(k)-\nu(k-1)}{k^\frac1{p}}\\[.1in]
&\lesssim \frac{\nu(\theta)}{(\theta+1)^\frac1{p}}+\sum_{k=\theta+1}^{n-1} \frac{\varepsilon_p(k)}{k}.
\end{align}
From \eqref{ccc} and \eqref{cdd}, it follows that $\text{(ii)}\Leftrightarrow\text{(iii)}$.
\qed

\section{Embedding Schramm spaces into $V_p[\nu]$}\label{sec7}

In this section, we shall describe relationships between $V_p[\nu]$ and various spaces of functions of generalized bounded variation. In this regard, $\Phi \text{BV}$ provides us with a suitable framework, by allowing for an extension of several such spaces (see Remark \ref{rem1}). 

We should remark in passing that \eqref{Hol} implies as well
$$
V[\nu(n)] \hookrightarrow V_p[\nu(n)] \hookrightarrow V\big[\nu(n)n^{1-\frac1{p}}\big],
$$
clarifying relations between $V_p[\nu]$ and Chanturiya classes.\footnote{Note of course that $\nu(n)n^{1-\frac1{p}}$ is not automatically concave. But this does not cause any problem as it is always quasiconcave (see \cite[Remark 2]{G}).}

The following lemma will be used in the proof of the sufficiency part of Theorem \ref{embedding}. This was first proven in \cite{Wu}. For each $n$, $\Phi_n^{-1}(x)$ denotes the inverse of the function $\Phi_n(x):=\sum\limits_{j=1}^n\phi_j(x)$, $x\geq0$.

\begin{lemma}\label{lemma1}
Let $1<p<\infty$ and $n\in\mathbb{N}$. If $f\in \Phi \emph{BV}$ and $\{x_j\}$ is a nonincreasing sequence of nonnegative real numbers such that
$$
\sum_{j=1}^n \phi_j(x_j)\leq \emph{Var}_\Phi(f),
$$
then
\begin{equation*}\label{conc}
\Big(\sum_{j=1}^n x_j^p\Big)^{\frac1{p}}\leq 16\max_{1\leq m\leq n} m^{\frac1{p}}\Phi_m^{-1}\big(\emph{Var}_\Phi(f)\big).
\end{equation*}
\end{lemma}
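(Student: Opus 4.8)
The plan is to observe first that the lemma depends on $f$ only through the single number $V:=\mathrm{Var}_\Phi(f)$, so it is really a statement about the nonincreasing sequence $\{x_j\}$ alone: writing $g(m):=\Phi_m^{-1}(V)$ and $A:=\max_{1\le m\le n}m^{1/p}g(m)$, I must show $\sum_{j=1}^n x_j^p\le 16^p A^p$. The natural first step is a pointwise (weak-$\ell^p$) bound. Since $\{x_j\}$ is nonincreasing, $x_j\ge x_m$ for $j\le m$, and since each $\phi_j$ is increasing, $\Phi_m(x_m)=\sum_{j=1}^m\phi_j(x_m)\le\sum_{j=1}^m\phi_j(x_j)\le\sum_{j=1}^n\phi_j(x_j)\le V$. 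Hence $x_m\le\Phi_m^{-1}(V)=g(m)$, and therefore $m\,x_m^p\le m\,g(m)^p\le A^p$ for every $m$. This already gives $\sum_{j=1}^n x_j^p\le A^p\sum_{j=1}^n j^{-1}\lesssim A^p\log n$, so the whole difficulty is to remove this logarithm; this is exactly the point where the \emph{full} constraint $\sum_j\phi_j(x_j)\le V$ (not merely its truncations used above) must be exploited.

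The plan for the main estimate is a dyadic decomposition in the index. Grouping $\{1,\dots,n\}$ into blocks $B_k=\{2^k,\dots,2^{k+1}-1\}$ and using monotonicity of $\{x_j\}$, each block contributes $\sum_{m\in B_k}x_m^p\le 2^k x_{2^k}^p\le A^p$. Two structural facts about the partial sums $\Phi_m$ will be used to upgrade the crude per-block bound $\le A^p$ into a geometrically summable one. The first is a doubling property: since $\phi_{m+i}\le\phi_i$, one has $\Phi_m(y)\le\Phi_{2m}(y)=\Phi_m(y)+\sum_{i=1}^m\phi_{m+i}(y)\le 2\Phi_m(y)$, which in turn forces the weak-$\ell^p$ envelope to decay at a controlled geometric rate, $g(m)/2\le g(2m)\le g(m)$ (evaluate the doubling inequality at $y=g(2m)$ and use concavity of $\Phi_m^{-1}$). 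The second is that the budget consumed on the tail of the index set cannot be negligible, because $\sum_j\phi_j(y)=\infty$ for every $y>0$. The idea is then to estimate, block by block, the budget $\beta_k:=\sum_{j\in B_k}\phi_j(x_j)$ from below in terms of $x_{2^{k+1}}$ (using $x_j\ge x_{2^{k+1}}$ on $B_k$) and to combine $\sum_k\beta_k\le V$ with convexity/doubling to conclude that the numbers $2^k x_{2^k}^p$ cannot remain near their maximal value $A^p$ across more than boundedly many scales: away from the peak scale they must form a geometric sequence. Summing that geometric tail gives $\sum_{j=1}^n x_j^p\lesssim A^p$, with the accumulated absolute constants absorbed into $16^p$; the incomplete final block and the scale realizing $A=\max_m m^{1/p}g(m)$ are handled as boundary cases.

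The main obstacle is precisely this conversion of the $O(\log n)$ crude blocks into an $O(1)$ \emph{effective} number of blocks. The delicate point is that the per-block budget $\beta_k$ need not be a fixed fraction of the ``total at scale $2^k$'': when the weights $\phi_j$ decay quickly, most of the budget sits at small indices and a dyadic block can consume only a small share of $V$, so a naive ``budget per block $\gtrsim$ constant'' estimate is false. The resolution must therefore use the hypothesis $\sum_j\phi_j=\infty$ together with the convexity of the $\phi_j$ in an essential way, coupling all scales through the single shared budget rather than through the per-scale bound $x_m\le g(m)$ (which, being a weak-$\ell^p$ statement, is genuinely insufficient and is what reintroduces the logarithm). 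Getting the bookkeeping of this coupling right, uniformly in $\Phi$ and $p$, is where the work lies and where the explicit constant in the statement is produced.
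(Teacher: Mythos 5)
The paper does not actually prove this lemma: it is quoted verbatim from X.~Wu's paper \cite{Wu}, so there is no internal argument to compare yours against. Judged on its own terms, your submission is not a proof but a plan whose decisive step is missing. What you do establish is correct: the reduction to the sequence statement, the weak-type bound $\Phi_m(x_m)\le\sum_{j\le m}\phi_j(x_j)\le V$ hence $m^{1/p}x_m\le A$, the resulting $O(A^p\log n)$ estimate, and the doubling inequality $\Phi_{2m}\le 2\Phi_m$ with its consequence $g(m)/2\le g(2m)\le g(m)$. But all of this is the easy half; the entire content of the lemma is the removal of the logarithm, and for that you only announce a strategy (``the numbers $2^kx_{2^k}^p$ cannot remain near $A^p$ across more than boundedly many scales'') and then concede that you do not know how to make the budget bookkeeping work. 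A claimed theorem whose key quantitative step is described as ``where the work lies'' has not been proved.

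Beyond incompleteness, the proposed mechanism is aimed in a direction that cannot succeed as stated. In the extremal situation $\Phi_m^{-1}(V)\approx Am^{-1/p}$ for \emph{all} $m\le n$ (which your own doubling estimate shows is the generic worst case, since it only says $g$ cannot drop by more than a factor $2$ per doubling of the index), the envelope $m^{1/p}g(m)$ is flat: there is no ``peak scale'' to be away from, and no geometric decay can be extracted from the envelope. The gain must come entirely from the single global constraint $\sum_j\phi_j(x_j)\le V$, e.g.\ by showing that any nonincreasing sequence saturating the pointwise bound on a set $S$ of indices forces $\sum_{j\in S}j^{-1}\lesssim 1$; no quantitative statement of this kind appears in your text, and you yourself note that the naive per-block budget bound is false. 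Two smaller points: the appeal to $\sum_j\phi_j(y)=\infty$ is a red herring here, since the lemma concerns a fixed finite $n$ and that hypothesis only guarantees nothing for finite sums (the real inputs are convexity of each $\phi_j$ and monotonicity in $j$); and the explicit constant $16$, which you promise to ``produce'' from the accumulated bookkeeping, is of course never obtained. To repair the argument you would need to either carry out the cross-scale budget estimate in full or simply cite \cite{Wu}, as the paper does.
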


The proof of the following result uses techniques form \cite{G} and \cite{GHM}.
\begin{theorem}\label{embedding}
Let $\Phi$ be a $\Phi$-sequence, $\nu$ be a modulus of variation and $1 \leq p < \infty$. Then $\Phi\emph{BV}$ embeds into $V_p[\nu]$ if and only if
\begin{equation}\label{criterion}
\limsup_{n\rightarrow\infty} \frac1{\nu(n)} \max_{1 \leq k \leq n}k^{\frac1{p}}\Phi_k^{-1}(1)<\infty.
\end{equation}
\end{theorem}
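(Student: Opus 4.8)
The plan is to recast the embedding $\Phi\text{BV}\hookrightarrow V_p[\nu]$ as a single uniform estimate. Since every $f$ with $\text{Var}_\Phi(f)<\infty$ is bounded and the $L^\infty$-part of the norm is controlled automatically, the inclusion is continuous if and only if there is a constant $C$ with $\upsilon_p(n,f)\le C\nu(n)$ for all $n$ and all $f$ in the unit ball $\{f:\text{Var}_\Phi(f)\le 1\}$. Thus both directions reduce to comparing $\sup_{\text{Var}_\Phi(f)\le1}\upsilon_p(n,f)$ with $\max_{1\le k\le n}k^{1/p}\Phi_k^{-1}(1)$, and I would show these two quantities are comparable, which immediately yields the equivalence with \eqref{criterion}.

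For sufficiency, fix $f$ with $\text{Var}_\Phi(f)\le 1$ and nonoverlapping intervals $I_1,\dots,I_n$. Put $x_j=|f(I_j)|$ and relabel so that $\{x_j\}$ is nonincreasing. Because $\text{Var}_\Phi(f)$ is a supremum over collections \emph{and} labelings while $\phi_1\ge\phi_2\ge\cdots$, this labeling gives $\sum_{j=1}^n\phi_j(x_j)\le \text{Var}_\Phi(f)\le 1$. Hence Lemma \ref{lemma1} applies to the nonincreasing sequence $\{x_j\}$ and, since $\Phi_m^{-1}$ is increasing, yields
$$
\Big(\sum_{j=1}^n|f(I_j)|^p\Big)^{1/p}=\Big(\sum_{j=1}^n x_j^p\Big)^{1/p}\le 16\max_{1\le m\le n}m^{1/p}\Phi_m^{-1}(1).
$$
Taking the supremum over all collections gives $\upsilon_p(n,f)\le 16\max_{1\le m\le n}m^{1/p}\Phi_m^{-1}(1)$, and \eqref{criterion} then furnishes $\upsilon_p(n,f)\lesssim\nu(n)$ uniformly in $f$, the finitely many small $n$ being absorbed into the constant.

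For necessity, I would test the assumed inequality $\upsilon_p(n,f)\le C\nu(n)\|f\|_{p,\nu}$-type bound against extremal functions. Given $n$ and $k\le n$, set $a=\Phi_k^{-1}(1)$ and build a saw-tooth function $f_k$ oscillating between $0$ and $a$ with roughly $k$ teeth on a small subinterval. Capturing each monotone rise and fall on a separate interval shows $\upsilon_p(k,f_k)\gtrsim a\,k^{1/p}=k^{1/p}\Phi_k^{-1}(1)$, while the fact that no single difference exceeds $a$ keeps $\text{Var}_\Phi(f_k)\approx\Phi_k(a)=1$, so $\|f_k\|_{\Phi\text{BV}}\lesssim 1$ (note $a\le\phi_1^{-1}(1)$ is bounded). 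Since $\upsilon_p$ is nondecreasing in its first argument, $k^{1/p}\Phi_k^{-1}(1)\lesssim\upsilon_p(n,f_k)\lesssim\nu(n)$; taking the maximum over $k\le n$ yields \eqref{criterion}.

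The delicate point, which I would treat most carefully, lies in the necessity construction. Because each $\phi_j$ is convex with $\phi_j(0)=0$, it is superadditive, so a monotone staircase of $k$ equal risers would have its $\Phi$-variation realized by a \emph{single} large jump ($\phi_1(ka)\ge\sum_{j\le k}\phi_j(a)$), inflating $\text{Var}_\Phi$ far beyond $\Phi_k(a)$. Forcing the function to oscillate caps every admissible difference at the amplitude $a$, which is exactly what pins $\text{Var}_\Phi(f_k)$ to $\Phi_k(a)$ while keeping the $p$-variation as large as $k^{1/p}a$. Tracking the harmless factors of two coming from counting both rises and falls, together with verifying the inequality $\sum_j\phi_j(x_j)\le\text{Var}_\Phi(f)$ for the sorted sequence in the sufficiency step, are the only places demanding genuine care.
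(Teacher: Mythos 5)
Your sufficiency argument is essentially the paper's: sort the differences, feed them to Lemma \ref{lemma1}, and use monotonicity of $\Phi_m^{-1}$; the only cosmetic difference is that you normalize to the unit ball $\{\mathrm{Var}_\Phi(f)\le 1\}$ where the paper instead keeps a general $f$ and absorbs $\mathrm{Var}_\Phi(f)$ via the concavity inequality $\Phi_n^{-1}(\alpha x)\le(1+\alpha)\Phi_n^{-1}(x)$. Your necessity argument, however, takes a genuinely different route. The paper argues by contraposition: assuming \eqref{criterion} fails, it extracts sequences $n_k,m_k$ with $m_k^{1/p}\Phi_{m_k}^{-1}(1)/\nu(n_k)>2^{4k}$ and glues rescaled step functions $f_k$ of amplitude $2^{-k}\Phi_{m_k}^{-1}(1)$ on disjoint supports into a single function $f=\sum_k f_k$ lying in $\Phi\mathrm{BV}$ but not in $V_p[\nu]$; this directly refutes the \emph{set} inclusion. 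You instead test a presumed uniform bound against a family of saw-tooth functions with $k$ teeth of amplitude $\Phi_k^{-1}(1)$, which is cleaner and correctly isolates the key point (oscillation caps every admissible difference at the amplitude, so $\mathrm{Var}_\Phi(f_k)\approx\Phi_{2k}(\Phi_k^{-1}(1))\le 2$ while $\upsilon_p(k,f_k)\gtrsim k^{1/p}\Phi_k^{-1}(1)$). The price is the step you assert without proof, namely that the set inclusion $\Phi\mathrm{BV}\subseteq V_p[\nu]$ forces a uniform estimate $\upsilon_p(n,f)\lesssim\nu(n)$ on the unit ball. That is true, but it requires the closed graph theorem (both spaces are Banach, by Proposition \ref{Banach} and \cite{SW2}, and norm convergence in each implies pointwise convergence, so the inclusion has closed graph) — or, equivalently, a gliding-hump superposition of your $f_k$'s of exactly the kind the paper performs. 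You should either cite closed graph explicitly or carry out that superposition; as written this is the one unjustified link in your chain. Everything else, including the rearrangement step $\sum_j\phi_j(x_j)\le\mathrm{Var}_\Phi(f)$ for the sorted sequence and the bound $\Phi_k^{-1}(1)\le\phi_1^{-1}(1)$ controlling the sup norm of the test functions, checks out.
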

\begin{proof}[Proof]
For simplicity, we shall assume throughout the proof that all functions are defined on $[0,1]$. 
Suppose \eqref{criterion} does not hold. Then there exist sequences $\{n_k\}$ and $\{m_k\}$ such that $n_k>2^{k+2}$ and,

\begin{equation}\label{ee1}
1 \leq m_k \leq n_k,
\end{equation}

\begin{equation}\label{e2}
\frac{m_k^{\frac1{p}}\Phi_{m_k}^{-1}(1)}{\nu(n_k)}>2^{4k},
\end{equation}
where
$$
\max_{1 \leq \gamma \leq n_k}\gamma^{\frac1{p}}\Phi_\gamma^{-1}(1)=m_k^{\frac1{p}}\Phi_{m_k}^{-1}(1).
$$

For each $k$, let $s_k$ be the largest integer such that $2s_k-1\leq 2^{-k}n_k$, and put $r_k:=\min\{m_k,s_k\}$. Then define
$$ f_k(x):=\begin{cases}
2^{-k}\Phi_{m_k}^{-1}(1), &\textmd{if}\  x\in [2^{-k}+\frac{2j-2}{n_k},2^{-k}+\frac{2j-1}{n_k}) ;~~~~~ 1\leq j\leq r_k, \\
0 ,  &\textmd{otherwise in} \ [0,1].\\
\end{cases}
$$
Since the $f_k$ have disjoint supports,
$$
f(x):=\sum_{k=1}^{\infty}f_k(x)
$$
is well-defined. Furthermore, one can without much difficulty verify that for each positive integer $k$,
\begin{equation}\label{2.3}
\text{Var}_\Phi(f_k)=\sum_{j=1}^{2r_k} \phi_j(2^{-k}\Phi_{m_k}^{-1}(1)),
\end{equation}
and
\begin{equation}\label{2.4}
\text{Var}_\Phi(f)\leq\sum_{k=1}^{\infty}\text{Var}_\Phi(f_k).
\end{equation}
Since for each $k$, $\Phi_{m_k}$ is an increasing convex function with $\Phi_{m_k}(0)=0$, it can be observed that
$$
\Phi_{m_k}(\alpha x)\leq\alpha\Phi_{m_k}(x)\ \ \ (0<\alpha<1,\ \ x\geq 0).
$$
By using this fact, along with \eqref{2.3} and \eqref{2.4}, we obtain
\begin{align*}
\text{Var}_\Phi(f) & \le\
\sum_{k=1}^{\infty} \sum_{j=1}^{2r_k} \phi_j(2^{-k}\Phi_{m_k}^{-1}(1))\\[.1in]
&=\sum_{k=1}^{\infty} \Phi_{2r_k}(2^{-k}\Phi_{m_k}^{-1}(1)) \\[.1in]
&\leq \sum_{k=1}^{\infty} \Phi_{2m_k}(2^{-k}\Phi_{m_k}^{-1}(1))\\[.1in]
&\leq\sum_{n=1}^{\infty} 2\Phi_{m_k}(2^{-k}\Phi_{m_k}^{-1}(1))<\infty.
\end{align*}
Therefore, $f \in \Phi\text{BV}$.

Let us now show that $f\notin V_p[\nu]$. First note that by the definition of $s_k$ we have $2(s_k+1)-1>2^{-k}n_k$. Combining this fact with inequality $n_k>2^{k+2}$ yields
\begin{equation}\label{ee2}
2s_k-1\geq 2^{-k-1}n_k.
\end{equation}
Therefore, if $r_k=s_k$, it follows from \eqref{ee1} and \eqref{ee2} that
$$
2r_k-1\geq 2^{-k-1}n_k \geq 2^{-k-1} m_k,
$$
and if $r_k=m_k$, we get $2r_k-1\geq m_k$. Thus in any case we have
\begin{equation}\label{e4}
(2r_k-1)^{\frac1{p}}\Phi_{m_k}^{-1}(1)\geq 2^{\frac{-k-1}{p}}m_k^{\frac1{p}}\Phi_{m_k}^{-1}(1).
\end{equation}
As a result, by letting
$$
I_j:=\left[2^{-k}+\frac{j-1}{n_k},2^{-k}+\frac{j}{n_k}\right], \hspace{.3in}1\leq j\leq2r_k-1,
$$
from \eqref{e4} we get
\begin{align}\label{e5}
\begin{split}
\upsilon_p(n_k,f)\geq \Big(\sum_{j=1}^{2r_k-1}|f(I_j)|^p\Big)^{\frac1{p}}=(2r_k-1)^{\frac1{p}}2^{-k}\Phi_{m_k}^{-1}(1)\\[.1in]
&\hspace{-4.1cm}\geq 2^{\frac{-kp-k-1}{p}}m_k^{\frac1{p}}\Phi_{m_k}^{-1}(1).
\end{split}
\end{align}
Finally, using \eqref{e2} and \eqref{e5} we conclude that
$$
\frac{\upsilon_p(n_k,f)}{\nu(n_k)}\geq \frac{2^{\frac{-kp-k-1}{p}}m_k^{\frac1{p}}\Phi_{m_k}^{-1}(1)}{\nu(n_k)}\geq 2^{\frac{-kp-k-1}{p}}\cdot2^{4k} \geq 2^k,
$$
which means $f\notin V_p[\nu]$.

\bigskip
Conversely, let $f\in \Phi\text{BV}$. Then there exists a constant $c>0$ such that $\text{Var}_\Phi(cf)<\infty$. Without loss of generality we may assume that $c=1$. Let $n\in\mathbb{N}$ and $\{I_j\}_{j=1}^n$ be a nonoverlapping collection of subintervals of $[0,1]$. We consider two cases:

\noindent {\bf Case I:} $p=1$. Since $\Phi_n$ is convex, from Jensen's inequality we get
\begin{equation}\label{1}
\Phi_n\left(\frac{\sum_{j=1}^n|f(I_j)|}{n}\right)\leq\sum_{j=1}^n\frac{\Phi_n(|f(I_j)|)}{n}.
\end{equation}
On the other hand, by rearranging the terms of the right hand side of (\ref{1}) we may write
\begin{align*}
\sum_{j=1}^n\Phi_n(|f(I_j)|)=\Big(\phi_1(|f(I_1)|)+\phi_2(|f(I_2)|)+...+\phi_n(|f(I_n)|)\Big)\\[.1in]
&\hspace{-8cm}+\Big(\phi_2(|f(I_1)|)+\phi_3(|f(I_2)|)+...+\phi_1(|f(I_n)|)\Big)\\[.1in]
&\hspace{-7.8cm}\vdots\\[.1in]
&\hspace{-8cm}+\Big(\phi_n(|f(I_1)|)+\phi_1(|f(I_2)|)+...+\phi_{n-1}(|f(I_n)|)\Big)\leq n\text{Var}_\Phi(f).
\end{align*}
Combining this with (\ref{1}) yields
$$
\Phi_n\left(\frac{\sum_{j=1}^n|f(I_j)|}{n}\right)\leq \text{Var}_\Phi(f).
$$
Since $\Phi_n^{-1}(0)=0$ for all $n$, making use of the concavity of $\Phi_n^{-1}$ one can easily verify that for all $\alpha,x\geq0$,
\begin{equation}\label{concave}
\Phi_n^{-1}(\alpha x)\leq(1+\alpha)\Phi_n^{-1}(x).
\end{equation}
Applying this fact with $\alpha=\text{Var}_\Phi(f)$ and $x=1$, an estimation of the sum $\sum\limits_{j=1}^n|f(I_j)|$ is obtained:
\begin{align*}\label{in1}
\sum_{j=1}^n|f(I_j)|&\leq \left(1+\text{Var}_\Phi(f)\right)n\Phi_n^{-1}\big(1\big)\\[.1in]
&=\left(1+\text{Var}_\Phi(f)\right)\max_{1 \leq k \leq n}k\Phi_k^{-1}(1).
\end{align*}

\noindent {\bf Case II:} $p>1$. Without loss of generality, we may assume that the $|f(I_j)|$ are arranged in descending order. Then taking $x_j:=|f(I_j)|$ in Lemma \ref{lemma1} and using  \eqref{concave} yields
$$
\Big(\sum_{j=1}^n |f(I_j)|^p\Big)^{\frac1{p}}\leq 16\left(1+\text{Var}_\Phi(f)\right)\max_{1\leq k\leq n} k^{\frac1{p}}\Phi_k^{-1}(1).
$$
Consequently, in each case $f\in V_p[\nu]$ as $n$ was arbitrary.
\end{proof}

With suitable choices of $\Phi$ in the above theorem we obtain characterizations of the embedding $X \hookrightarrow V_p[\nu]$, for various function spaces $X$ considered in the literature.

\begin{cor}\label{conf}
Let $\phi$ be an Orlicz function, $\Lambda$ be a $\Lambda$-sequence, and $1 \leq p,q < \infty$. Then

\begin{itemize}

\medskip
\item [\rm{(i)}]~ $BV_q \hookrightarrow V_p[\nu] \ \ \ \Longleftrightarrow \ \ \ \limsup\limits_{n\rightarrow\infty}\frac1{\nu(n)}\max\limits_{1 \leq k \leq n}k^{\frac1{p}-\frac1{q}}<\infty$;

\medskip
\item [\rm{(ii)}]~ $V_\phi \hookrightarrow V_p[\nu] \ \ \ \Longleftrightarrow \ \ \ \limsup\limits_{n\rightarrow\infty}\frac1{\nu(n)} \max\limits_{1 \leq k \leq n}k^{\frac1{p}}\phi^{-1}\Big(\frac1{k}\Big)<\infty$;

\medskip
\item [\rm{(iii)}]~ $\Lambda\emph{BV} \hookrightarrow V_p[\nu] \ \ \ \Longleftrightarrow \ \ \ \limsup\limits_{n\rightarrow\infty}\frac1{\nu(n)}\max\limits_{1 \leq k \leq n}\frac{k^{\frac1{p}}}{\sum_{j=1}^k \frac1{\lambda_j}}<\infty$;

\medskip
\item [\rm{(iv)}]~ $\Lambda\emph{BV}^{(q)} \hookrightarrow V_p[\nu] \ \ \ \Longleftrightarrow \ \ \ \limsup\limits_{n\rightarrow\infty}\frac1{\nu(n)}\max\limits_{1 \leq k \leq n}k^{\frac1{p}}\Big(\sum_{j=1}^k \frac1{\lambda_j}\Big)^{-\frac1{q}}<\infty$; and

\medskip
\item [\rm{(v)}]~ $\phi\Lambda\emph{BV} \hookrightarrow V_p[\nu] \ \ \ \Longleftrightarrow \ \ \ \limsup\limits_{n\rightarrow\infty}\frac1{\nu(n)}\max\limits_{1 \leq k \leq n}k^{\frac1{p}}\phi^{-1}\Big(\Big(\sum_{j=1}^k \frac1{\lambda_j}\Big)^{-1}\Big)<\infty$.

\end{itemize}

\end{cor}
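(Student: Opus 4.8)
The plan is to deduce all five equivalences directly from Theorem \ref{embedding} by specializing the $\Phi$-sequence according to the recipes recorded in Remark \ref{rem1}. Each of the classes $BV_q$, $V_\phi$, $\Lambda\text{BV}$, $\Lambda\text{BV}^{(q)}$ and $\phi\Lambda\text{BV}$ coincides with $\Phi\text{BV}$ for a particular choice of $\{\phi_j\}$, so that the only substantive work is to compute the partial-sum function $\Phi_k(x)=\sum_{j=1}^k\phi_j(x)$, invert it, and read off $\Phi_k^{-1}(1)$. Substituting the result into criterion \eqref{criterion} then reproduces each stated condition verbatim.

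Concretely, I would run through the five cases as follows. For (i) take $\phi_j(x)=x^q$, whence $\Phi_k(x)=kx^q$ and $\Phi_k^{-1}(1)=k^{-1/q}$, so $k^{1/p}\Phi_k^{-1}(1)=k^{1/p-1/q}$. For (ii) take $\phi_j(x)=\phi(x)$, whence $\Phi_k(x)=k\phi(x)$ and $\Phi_k^{-1}(1)=\phi^{-1}(1/k)$. For (iii) take $\phi_j(x)=x/\lambda_j$, whence $\Phi_k(x)=x\sum_{j=1}^k\lambda_j^{-1}$ and $\Phi_k^{-1}(1)=\big(\sum_{j=1}^k\lambda_j^{-1}\big)^{-1}$. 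For (iv) take $\phi_j(x)=x^q/\lambda_j$, giving $\Phi_k(x)=x^q\sum_{j=1}^k\lambda_j^{-1}$ and $\Phi_k^{-1}(1)=\big(\sum_{j=1}^k\lambda_j^{-1}\big)^{-1/q}$. For (v) take $\phi_j(x)=\phi(x)/\lambda_j$, giving $\Phi_k(x)=\phi(x)\sum_{j=1}^k\lambda_j^{-1}$ and $\Phi_k^{-1}(1)=\phi^{-1}\big(\big(\sum_{j=1}^k\lambda_j^{-1}\big)^{-1}\big)$. In every case, multiplying by $k^{1/p}$ and inserting into \eqref{criterion} yields precisely the displayed condition.

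The only point demanding genuine care—and what I regard as the main, albeit mild, obstacle—is verifying that each of these $\{\phi_j\}$ actually qualifies as a $\Phi$-sequence, so that Theorem \ref{embedding} is applicable. One must check that each $\phi_j$ is increasing and convex with $\phi_j(0)=0$ (immediate, since $x\mapsto x^q$ with $q\ge 1$ and any Orlicz function are convex, while linear functions are trivially convex), that $0<\phi_{j+1}(x)\le\phi_j(x)$ for $x>0$ (automatic when the $\phi_j$ coincide, and a consequence of the monotonicity $\lambda_{j+1}\ge\lambda_j$ in the Waterman-type cases), and that $\sum_{j=1}^\infty\phi_j(x)=\infty$ (clear in (i) and (ii), and guaranteed in the $\Lambda$-cases by the defining property $\sum_j\lambda_j^{-1}=\infty$ of a $\Lambda$-sequence together with $\phi(x)>0$ for $x>0$). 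Once these routine verifications are in place, the inversion of $\Phi_k$ and the substitution into \eqref{criterion} are elementary, and the corollary follows.
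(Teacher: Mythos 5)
Your proposal is correct and is exactly the argument the paper intends: the corollary is stated as an immediate specialization of Theorem \ref{embedding} via the choices of $\Phi$-sequence in Remark \ref{rem1}, and your computations of $\Phi_k^{-1}(1)$ in each of the five cases, together with the routine verification that each $\{\phi_j\}$ is a legitimate $\Phi$-sequence, are all accurate.
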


As another consequence of Theorem \ref{embedding} we have the main result of \cite{G}.

\begin{cor}
Let $\Phi$ be a $\Phi$-sequence and $\nu$ be a modulus of variation. Then $\Phi\emph{BV}$ embeds into $V[\nu]$ if and only if
$$
\limsup_{n\rightarrow\infty} \frac{n\Phi_n^{-1}(1)}{\nu(n)}<\infty.
$$
\end{cor}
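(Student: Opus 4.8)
The plan is to deduce this directly from Theorem \ref{embedding} by specializing to the case $p=1$. Recall from the introduction that $V[\nu]$ is nothing but $V_1[\nu]$, so Theorem \ref{embedding} with $p=1$ asserts that $\Phi\mathrm{BV}\hookrightarrow V[\nu]$ if and only if
\[
\limsup_{n\to\infty}\frac{1}{\nu(n)}\max_{1\le k\le n}k\,\Phi_k^{-1}(1)<\infty.
\]
Thus the whole content of the corollary reduces to showing that this condition is equivalent to $\limsup_{n\to\infty}\frac{n\,\Phi_n^{-1}(1)}{\nu(n)}<\infty$; in other words, that the maximum over $1\le k\le n$ may be replaced by the single term $k=n$.

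To carry this out I would set $b_k:=k\,\Phi_k^{-1}(1)\ge 0$ and prove the elementary fact that, for \emph{any} nonnegative sequence $\{b_k\}$,
\[
\limsup_{n\to\infty}\frac{\max_{1\le k\le n}b_k}{\nu(n)}<\infty
\quad\Longleftrightarrow\quad
\limsup_{n\to\infty}\frac{b_n}{\nu(n)}<\infty,
\]
using only that $\nu$ is nondecreasing and $\nu(n)\uparrow\infty$ (both of which hold under our standing assumptions). The forward implication is immediate, since $b_n\le\max_{1\le k\le n}b_k$ for every $n$, so that $\limsup_n b_n/\nu(n)\le\limsup_n\big(\max_{1\le k\le n}b_k\big)/\nu(n)$.

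For the reverse implication, suppose $R:=\limsup_n b_n/\nu(n)<\infty$ and fix $\varepsilon>0$. Then there is an $N$ with $b_k\le(R+\varepsilon)\nu(k)$ for all $k\ge N$. Given $n\ge N$, I would split the maximum according to whether $k<N$ or $N\le k\le n$: in the latter range the monotonicity of $\nu$ gives $b_k\le(R+\varepsilon)\nu(k)\le(R+\varepsilon)\nu(n)$, while for $k<N$ the quantity $B:=\max_{1\le k<N}b_k$ is a fixed finite constant. Hence
\[
\frac{\max_{1\le k\le n}b_k}{\nu(n)}\le\max\Big(\frac{B}{\nu(n)},\,R+\varepsilon\Big),
\]
and since $\nu(n)\to\infty$ the first argument tends to $0$; letting $\varepsilon\downarrow 0$ yields $\limsup_n\big(\max_{1\le k\le n}b_k\big)/\nu(n)\le R$. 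Applying this with $b_k=k\,\Phi_k^{-1}(1)$ gives the desired equivalence, and the corollary follows.

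I do not expect a genuine obstacle here: the statement is an elementary corollary of Theorem \ref{embedding} combined with the monotonicity and divergence of $\nu$, and no property of $\Phi_k^{-1}(1)$ beyond nonnegativity is needed. The only point requiring (minor) care is that the finitely many initial terms $b_k$ with $k<N$ could a priori dominate the maximum, but they are absorbed harmlessly precisely because $\nu(n)\to\infty$.
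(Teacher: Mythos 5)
Your proposal is correct and follows the same route the paper intends: the corollary is stated there as an immediate consequence of Theorem \ref{embedding} with $p=1$, and the only content to supply is exactly the elementary equivalence you prove, namely that for a nonnegative sequence $b_k$ the conditions $\limsup_n \nu(n)^{-1}\max_{1\le k\le n}b_k<\infty$ and $\limsup_n b_n/\nu(n)<\infty$ coincide because $\nu$ is nondecreasing and $\nu(n)\uparrow\infty$. Your handling of the finitely many initial terms is the right (and only) point of care, so nothing is missing.
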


\section{Symmetric sequence spaces, generalized variation and Fourier series}\label{new}

In this section we discuss consequences of Theorem \ref{embedding} for certain symmetric sequence spaces. This connection is provided via a pivotal result of Berezhnoi \cite{Berezhnoi1} that translates embedding theorems between symmetric sequence spaces with the Fatou property to embedding theorems between their corresponding spaces of functions of generalized bounded variation, and vice versa. In the second part of this section we apply results from \cite{Berezhnoi2} to obtain a convergence result of Dirichlet--Jordan type for $V_p[\nu]$, as well as to estimate the Fourier coefficients of functions in $V_p[\nu]$.

\subsection{Embeddings between certain symmetric sequence spaces}
Given a sequence $x = \{x_j\}_{j=1}^{\infty}$ of real numbers, denote the nonincreasing rearrangement of the sequence $\{|x_j|\}_{j=1}^{\infty}$ by $x^{*}=\{x^{*}_{j}\}_{j=1}^{\infty}$. A Banach sequence space $X$ is called \textit{symmetric} if $x\in X$ and $y^*_j\leq x^*_j$ for all $j$, implies $y\in X$ and $\|y\|_X\leq \|x\|_X$.

The {\it dual symmetric sequence space to $X$} (see, e.g., \cite{BSh,KM}) is denoted by $X'$ and its norm is given by
\begin{equation*}\label{ddd}
\|y\|_{X'}:=\sup\Big\{\sum_{k=1}^\infty x_k y_k: \|x\|_{X}\leq1\Big\}.
\end{equation*}
A symmetric sequence space $X$ is said to have the {\it Fatou property} if $X=X''$.

Important examples of such symmetric sequence spaces include Marcinkiewicz, Lorentz and Orlicz sequence spaces:

\medskip
{\bf Marcinkiewicz space.} Let $\nu=\{\nu(n)\}$ be a modulus of variation. The Marcinkiewicz sequence space $m(\nu,p)$ is the Banach space of all real sequences $x=\{x_j\}$ for which
$$
\|x\|_{m(\nu,p)}:=\sup_{1\leq n<\infty}\frac1{\nu(n)}\Big(\sum_{j=1}^n x_j^p\Big)^\frac1{p}<\infty.\footnote{Although $m(\nu,p)$ is defined slightly different than what Berezhnoi \cite{Berezhnoi1} calls the Marcinkiewicz space, we still use the same terminology when referring to $m(\nu,p)$.}
$$

{\bf Lorentz space.} Let $w=\{w_j\}$ be a {\it weight sequence}, that is, a nonincreasing sequence of positive numbers such that $w\in c_0 \backslash \ell_1$.\footnote{A sequence $w=\{w_j\}$ is a weight if and only if $\{1/w_j\}$ is a $\Lambda$-sequence (Remark \ref{rem1}).} By definition, the Lorentz sequence space $d(w,q)$ is the Banach space of all real sequences $x=(x_j)$ for which
$$
\|x\|_{d(w,q)}:=\Big(\sum_{j=1}^\infty (x_j^*)^q w_j\Big)^\frac1{q}<\infty.
$$

\medskip
{\bf Orlicz space.} Let $\phi$ be an Orlicz function. The Orlicz sequence space $\ell_\phi$ is the Banach space of all real sequences $x=(x_j)$ for which
$$
\|x\|_{\ell_\phi}:=\inf\Big\{c>0:\sum_{j=1}^\infty\phi(|x_j|/c)\leq1\Big\}<\infty.
$$

An important generalization of the Orlicz space is the concept of

\medskip
{\bf Modular space.} Let $\Phi=\{\phi_j\}$ be a sequence of Orlicz functions. The modular sequence space $\ell_\Phi$ is the Banach space of all real sequences $x=\{x_j\}$ for which
$$
\|x\|_{\ell_\Phi}:=\inf\Big\{c>0:\sum_{j=1}^\infty\phi_j(|x_j|/c)\leq1\Big\}<\infty.
$$

\begin{remark}
By taking $X$ to be $m(\nu,p)$, $d(w,q)$, $\ell_\phi$ and $\ell_\Phi$, we observe that $BV(X)$ coincides with $V_p[\nu]$, $\Lambda BV^{(q)}$, $V_\phi$ and $\Phi BV$, respectively.
\end{remark}

With the aid of the norm of each symmetric sequence space $X$, one may define a certain type of variation for functions on an interval $[a,b]$.

\begin{definition}[\cite{Berezhnoi1}]
A function $f$ on $[a,b]$ is said to be of \textit{bounded $X$-variation} if
$$
Var_X(f):=\sup_{\{I_j\}}\|\{f(I_j)\}_j\|_X<\infty,
$$
where the supremum is taken over all collections $\{I_j\}$ of nonoverlapping subintervals of $[a,b]$.
\end{definition}
The set of all functions $f$ of bounded $X$-variation is denoted by $BV(X)$ which turns into a Banach space with the norm
$$
\|f\|_{BV(X)}:=Var_X(f)+\sup_{t\in[a,b]}|f(t)|.
$$

Let $X_0$ and $X_1$ be symmetric sequence spaces. In \cite[Theorem 1]{Berezhnoi1}, Berezhnoi established the following
\begin{theorem}\label{Berezh}
A necessary and sufficient condition for the embedding $X_0\hookrightarrow X_1$ to hold is that $BV(X_0)\hookrightarrow BV(X_1)$.
\end{theorem}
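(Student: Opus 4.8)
The plan is to prove the two implications separately; the forward implication is routine, while the converse requires an explicit construction, and that construction is where the real work lies.

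For the direction $X_0\hookrightarrow X_1\Rightarrow BV(X_0)\hookrightarrow BV(X_1)$, suppose $\|x\|_{X_1}\le C\|x\|_{X_0}$ for every $x\in X_0$. Fix $f\in BV(X_0)$. For any collection $\{I_j\}$ of nonoverlapping subintervals of $[a,b]$ the sequence $\{f(I_j)\}$ lies in $X_0$, and $\|\{f(I_j)\}\|_{X_1}\le C\|\{f(I_j)\}\|_{X_0}\le C\,\mathrm{Var}_{X_0}(f)$. Taking the supremum over all such collections gives $\mathrm{Var}_{X_1}(f)\le C\,\mathrm{Var}_{X_0}(f)$, and since the sup-norm term is common to both $BV$-norms we obtain $\|f\|_{BV(X_1)}\le\max(C,1)\|f\|_{BV(X_0)}$, which is the asserted embedding.

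For the converse $BV(X_0)\hookrightarrow BV(X_1)\Rightarrow X_0\hookrightarrow X_1$, by rearrangement invariance it suffices to produce a constant $C$ with $\|x\|_{X_1}\le C\|x\|_{X_0}$ for every finite nonincreasing nonnegative sequence $x=(x_1,\dots,x_n)$ and then pass to the limit. Given such an $x$, I would encode it into a single plateau (step) function on $[0,1]$: choose pairwise disjoint closed subintervals $P_1,\dots,P_n$ separated by gaps, set $f\equiv x_j$ on $P_j$ and $f\equiv 0$ elsewhere. Then $\|f\|_\infty=x_1\le\|x\|_{X_0}/\|e_1\|_{X_0}\lesssim\|x\|_{X_0}$, so everything reduces to comparing $\mathrm{Var}_X(f)$ with $\|x\|_X$ for $X=X_0,X_1$. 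The lower bound $\mathrm{Var}_{X_1}(f)\ge\|x\|_{X_1}$ is immediate, since the collection of intervals straddling the left edges of the plateaus produces exactly the difference sequence $(x_1,\dots,x_n)=x$.

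The upper bound $\mathrm{Var}_{X_0}(f)\lesssim\|x\|_{X_0}$ is the heart of the matter, and the main obstacle. Here I would take an arbitrary nonoverlapping collection $\{I_l=[u_l,v_l]\}$, discard those with $f(I_l)=0$, and use $|f(I_l)|\le f(u_l)+f(v_l)$ together with the triangle inequality and the monotonicity of the symmetric norm to get $\|\{f(I_l)\}\|_{X_0}\le\|\{f(u_l)\}\|_{X_0}+\|\{f(v_l)\}\|_{X_0}$. The key geometric observation is that, because the $I_l$ are nonoverlapping, for each plateau $P_j$ at most one surviving interval can have its left endpoint in $P_j$: such an interval must exit $P_j$ on the right (otherwise $f(I_l)=0$), and this prevents any later interval from having its left endpoint in $P_j$; the same holds for right endpoints. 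Hence each of $\{f(u_l)\}$ and $\{f(v_l)\}$ takes each value $x_j$ at most once (boundary overlaps cost only a harmless constant), so each is dominated after rearrangement by $x$ and has $X_0$-norm $\lesssim\|x\|_{X_0}$. This gives $\mathrm{Var}_{X_0}(f)\lesssim\|x\|_{X_0}$ with a constant independent of $x$ and of the spaces. Combining the estimates, $\|x\|_{X_1}\le\mathrm{Var}_{X_1}(f)\le\|f\|_{BV(X_1)}\lesssim\|f\|_{BV(X_0)}\lesssim\|x\|_{X_0}$, where the middle inequality is the hypothesised embedding $BV(X_0)\hookrightarrow BV(X_1)$. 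Applying the resulting bound to the truncations $x^{(N)}$ of an arbitrary $x\in X_0$ and invoking the Fatou property of $X_1$ (lower semicontinuity of the symmetric norm under $x^{(N)}\uparrow x$) yields $x\in X_1$ with the same bound, i.e. $X_0\hookrightarrow X_1$. The only delicate point, as indicated, is the uniform upper estimate for $\mathrm{Var}_{X_0}(f)$: one must rule out that some cleverly chosen collection of nonoverlapping intervals extracts from the plateau function a difference sequence whose symmetric norm exceeds a fixed multiple of $\|x\|_{X_0}$, and it is precisely the nonoverlapping geometry that forecloses this.
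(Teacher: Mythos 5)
Your argument is correct, but note that the paper does not prove this statement at all: it is quoted verbatim from Berezhno\u{\i} \cite{Berezhnoi1} (Theorem 1 there) and used as a black box, so there is no in-paper proof to compare against. Your write-up supplies a self-contained argument of the standard shape. The forward direction is indeed immediate from the definition of $\mathrm{Var}_X$. For the converse, your plateau construction works, and you correctly isolate the only genuinely delicate point: the uniform bound $\mathrm{Var}_{X_0}(f)\lesssim\|x\|_{X_0}$. Your counting argument there is sound and in fact gives ``exactly at most one'' rather than ``at most a bounded number'': if a surviving interval $I_l=[u_l,v_l]$ has $u_l\in P_j$, then $v_l$ must lie to the right of $P_j$ (else $f(I_l)=0$), so every later nonoverlapping interval has its left endpoint to the right of $P_j$; the symmetric statement holds for right endpoints, and discarded intervals contribute nothing. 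Hence each of $\{f(u_l)\}$ and $\{f(v_l)\}$ is, after rearrangement, dominated by $x^*=x$, and the ideal property of the symmetric norm finishes the estimate. Two small points to make explicit if you write this up formally: (a) the passage from $|f(I_l)|\le f(u_l)+f(v_l)$ to the norm inequality uses both the lattice property of the symmetric norm and the triangle inequality, in that order; (b) the final limiting step genuinely needs the Fatou property of $X_1$ (which the paper does assume for the spaces to which Theorem \ref{Berezh} is applied), since the uniform bound on truncations $\|x^{(N)}\|_{X_1}\le C\|x\|_{X_0}$ does not by itself place $x$ in $X_1$ for a general symmetric sequence space.
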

In view of this result combined with Theorem \ref{embedding}, we obtain the following characterization of the embedding of Modular spaces into Marcinkiewicz spaces which is, to the best of our knowledge, not proven or stated elsewhere in the literature. 

\begin{theorem}
Let $\Phi=\{\phi_j\}$ be a $\Phi$-sequence and $\nu$ be a modulus of variation. Then the embedding $\ell_\Phi\hookrightarrow m(\nu,p)$ holds if and only if \eqref{criterion} is fulfilled.
\end{theorem}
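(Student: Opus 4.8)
The plan is to deduce this statement directly from Berezhnoi's transference principle (Theorem~\ref{Berezh}) together with the embedding characterization already obtained in Theorem~\ref{embedding}, using the dictionary between symmetric sequence spaces and their associated variation spaces recorded in the Remark preceding Theorem~\ref{Berezh}. The two identifications I would rely on are $BV(\ell_\Phi)=\Phi\text{BV}$ and $BV(m(\nu,p))=V_p[\nu]$, understood as equalities of sets with equivalent norms.

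First I would check that Theorem~\ref{Berezh} is applicable, i.e., that $\ell_\Phi$ and $m(\nu,p)$ are symmetric sequence spaces with the Fatou property, which is the hypothesis under which Berezhnoi's result is stated (see the discussion at the start of this section). For the Marcinkiewicz space $m(\nu,p)$ this is immediate: its norm $\|x\|_{m(\nu,p)}=\sup_{n}\nu(n)^{-1}\big(\sum_{j=1}^n (x_j^*)^p\big)^{1/p}$ is a supremum of functionals that are lower semicontinuous with respect to coordinatewise convergence, so $m(\nu,p)=m(\nu,p)''$. For the modular space $\ell_\Phi$, the Luxemburg-type norm enjoys the Fatou property by the standard argument applied to the defining modular $\sum_j \phi_j(|x_j|/c)$; both facts are well known and I would simply cite them (e.g. \cite{BSh,KM}).

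With applicability secured, the argument is a formal concatenation of equivalences. Applying Theorem~\ref{Berezh} with $X_0=\ell_\Phi$ and $X_1=m(\nu,p)$ gives that $\ell_\Phi\hookrightarrow m(\nu,p)$ holds if and only if $BV(\ell_\Phi)\hookrightarrow BV(m(\nu,p))$. Invoking the identifications above, the right-hand embedding is exactly $\Phi\text{BV}\hookrightarrow V_p[\nu]$. Finally, Theorem~\ref{embedding} asserts that $\Phi\text{BV}\hookrightarrow V_p[\nu]$ is equivalent to condition \eqref{criterion}. Chaining these three equivalences proves the theorem.

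I do not expect any genuine analytic obstacle here, since no new estimate is needed; the one point requiring care is the bookkeeping around Theorem~\ref{Berezh}, namely verifying the Fatou property for both spaces and confirming that the coincidences $BV(\ell_\Phi)\cong\Phi\text{BV}$ and $BV(m(\nu,p))\cong V_p[\nu]$ are genuine norm equivalences, so that the continuous inclusions transfer in both directions. Once this is in place the result follows purely formally.
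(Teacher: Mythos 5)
Your proposal is correct and follows exactly the paper's route: the paper likewise obtains this theorem by combining Berezhnoi's transference principle (Theorem~\ref{Berezh}) with Theorem~\ref{embedding}, via the identifications $BV(\ell_\Phi)=\Phi\mathrm{BV}$ and $BV(m(\nu,p))=V_p[\nu]$ recorded in the preceding Remark. Your explicit verification of the Fatou property for both spaces is a point the paper leaves implicit, but it does not alter the argument.
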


Likewise, from Corollary \ref{conf} and Theorem \ref{Berezh} we may, for instance, infer the following

\begin{cor}\label{conf}
Let $\phi$ be an Orlicz function and $w$ be a weight sequence. Then

\begin{itemize}

\medskip
\item [\rm{(i)}]~ $\ell_\phi \hookrightarrow m(\nu,p) \ \ \ \Longleftrightarrow \ \ \ \limsup\limits_{n\rightarrow\infty}\frac1{\nu(n)} \max\limits_{1 \leq k \leq n}k^{\frac1{p}}\phi^{-1}\Big(\frac1{k}\Big)<\infty$.

\medskip
\item [\rm{(ii)}]~ $d(w,q) \hookrightarrow m(\nu,p) \ \ \ \Longleftrightarrow \ \ \ \limsup\limits_{n\rightarrow\infty}\frac1{\nu(n)}\max\limits_{1 \leq k \leq n}k^{\frac1{p}}\Big(\sum_{j=1}^k w_j\Big)^{-\frac1{q}}<\infty$.
\end{itemize}

\end{cor}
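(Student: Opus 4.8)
The plan is to reduce both equivalences to the corresponding function-space embeddings already settled in Corollary~\ref{conf} of Section~\ref{sec7}, using Berezhnoi's transfer principle (Theorem~\ref{Berezh}) together with the identifications recorded in the Remark immediately preceding Theorem~\ref{Berezh}. In other words, the whole argument is a dictionary translation between embeddings of symmetric sequence spaces and embeddings of their associated spaces of functions of generalized bounded variation.

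For part (i), I would first invoke Theorem~\ref{Berezh} with $X_0=\ell_\phi$ and $X_1=m(\nu,p)$ to obtain
$$
\ell_\phi\hookrightarrow m(\nu,p)\quad\Longleftrightarrow\quad BV(\ell_\phi)\hookrightarrow BV(m(\nu,p)).
$$
By the quoted Remark, $BV(\ell_\phi)=V_\phi$ and $BV(m(\nu,p))=V_p[\nu]$, so the right-hand embedding is exactly $V_\phi\hookrightarrow V_p[\nu]$. The conclusion then follows verbatim from case (ii) of Corollary~\ref{conf} in Section~\ref{sec7}, which characterizes this embedding by precisely the displayed $\limsup$ condition involving $k^{\frac1{p}}\phi^{-1}(1/k)$.

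For part (ii), the same strategy applies with $X_0=d(w,q)$ and $X_1=m(\nu,p)$, giving
$$
d(w,q)\hookrightarrow m(\nu,p)\quad\Longleftrightarrow\quad BV(d(w,q))\hookrightarrow V_p[\nu].
$$
The only extra bookkeeping is the correspondence between weights and $\Lambda$-sequences: setting $\lambda_j:=1/w_j$ makes $\{\lambda_j\}$ a $\Lambda$-sequence, since $w$ is nonincreasing with $\sum_j w_j=\infty$ forces $\{\lambda_j\}$ to be nondecreasing with $\sum_j \tfrac1{\lambda_j}=\infty$ (this is the footnoted equivalence). Under this identification $BV(d(w,q))=\Lambda\mathrm{BV}^{(q)}$, so applying case (iv) of Corollary~\ref{conf} yields the criterion with $\sum_{j=1}^k\tfrac1{\lambda_j}$ in place of $\sum_{j=1}^k w_j$; substituting $\tfrac1{\lambda_j}=w_j$ produces exactly the stated condition.

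Since each step is a direct citation of an established result, there is no genuine analytic obstacle here; the one point that demands care is checking that the weight/$\Lambda$-sequence substitution is internally consistent, i.e. that $\lambda_j=1/w_j$ simultaneously satisfies the $\Lambda$-sequence axioms and converts $\sum_{j=1}^k\tfrac1{\lambda_j}$ into $\sum_{j=1}^k w_j$. This is precisely what the footnote guarantees, so no further work is needed.
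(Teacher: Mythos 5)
Your proposal is correct and follows exactly the route the paper intends: the paper derives this corollary in one line by combining Berezhnoi's transfer principle (Theorem \ref{Berezh}) with the identifications $BV(\ell_\phi)=V_\phi$, $BV(d(w,q))=\Lambda BV^{(q)}$, $BV(m(\nu,p))=V_p[\nu]$ and the corresponding cases (ii) and (iv) of the embedding corollary in Section \ref{sec7}. Your explicit check of the $\lambda_j=1/w_j$ dictionary is exactly the bookkeeping the paper leaves implicit.
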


\subsection{More on Fourier series and Fourier coefficients in $V_p[\nu]$}\label{sec6}

In this concluding part, we give a (uniform) convergence criterion for the Fourier series of functions in $V_p[\nu]$, and then obtain the order of magnitude of Fourier coefficients in this space.

It was proven in \cite[Theorem 8]{Berezhnoi2} that for the Fourier series of an arbitrary continuous function in $BV(X)$ to converge uniformly, it is necessary and sufficient that $\big\|\big\{\frac1{k}\big\}\big\|_{X'}<\infty$.

As $V_p[\nu]=BV(m(\nu,p))$, by invoking the above-mentioned result, the proof of Proposition \ref{Unif2} reduces to showing that
\begin{equation}\label{normm}
 \Big\|\Big\{\frac1{k}\Big\}\Big\|_{m(\nu,p)'}\approx \sum_{k=1}^\infty \frac{\nu(k)}{k^{1+\frac1{p}}}.   
\end{equation}

\begin{theorem}\label{Unif2}
Let $\nu$ be a modulus of variation and $1\leq p<\infty$. Then the following conditions are equivalent.

\begin{itemize}

\medskip
\item [\rm{(i)}]~ The Fourier series of every continuous function in $V_p[\nu]$ converges uniformly;

\medskip
\item [\rm{(ii)}]~ $\sum\limits_{k=1}^\infty \nu(k)k^{-(1+\frac1{p})}< \infty$;

\medskip
\item [\rm{(iii)}]~ $\sum\limits_{k=1}^\infty \Delta(k^{-\frac1{p}})\nu(k)< \infty$;

\medskip
\item [\rm{(iv)}]~ $\sum\limits_{k=1}^{\infty}\varepsilon_1(k)k^{-\frac{1}{p}}< \infty$; 

\medskip
\item [\rm{(v)}]~ $\sum\limits_{k=1}^{\infty}\varepsilon_p(k)k^{-1}< \infty$.

\end{itemize}
\end{theorem}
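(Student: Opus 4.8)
The plan is to read off $\text{(i)}\Leftrightarrow\text{(ii)}$ from the uniform–convergence criterion quoted from \cite[Theorem 8]{Berezhnoi2} together with the norm identity \eqref{normm}, and then to obtain $\text{(ii)}\Leftrightarrow\text{(iii)}\Leftrightarrow\text{(iv)}\Leftrightarrow\text{(v)}$ by recycling the elementary series manipulations already carried out in the proof of Theorem \ref{convergence}. Since $V_p[\nu]=BV(m(\nu,p))$, Berezhnoi's theorem says that (i) holds if and only if $\|\{1/k\}\|_{m(\nu,p)'}<\infty$, so the entire analytic content of $\text{(i)}\Leftrightarrow\text{(ii)}$ is the estimate \eqref{normm}, namely $\|\{1/k\}\|_{m(\nu,p)'}\approx\sum_{k}\nu(k)k^{-(1+1/p)}$. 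I would isolate \eqref{normm} as the key lemma and prove its two bounds separately.

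For the upper bound I would unwind the definition of the dual norm. Because $\{1/k\}$ is nonnegative and nonincreasing and $m(\nu,p)$ is symmetric, the Hardy--Littlewood rearrangement inequality lets me restrict the supremum defining $\|\{1/k\}\|_{m(\nu,p)'}$ to nonnegative nonincreasing $x$ with $\|x\|_{m(\nu,p)}\le1$, i.e. with $\sum_{j=1}^n x_j^p\le\nu(n)^p$ for all $n$. For such $x$ monotonicity gives $kx_k^p\le\sum_{j=1}^k x_j^p\le\nu(k)^p$, hence $x_k\le\nu(k)k^{-1/p}$, and so $\sum_k x_k/k\le\sum_k\nu(k)k^{-(1+1/p)}$; taking the supremum yields the upper bound. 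For the lower bound I would test against the extremal sequence $x_k=\varepsilon_p(k)$, whose partial sums telescope to $\sum_{j=1}^n\varepsilon_p(j)^p=\nu(n)^p$. When $\nu^p$ is concave, $\varepsilon_p$ is nonincreasing by Lemma \ref{epsionprop}(ii), so no rearrangement occurs, this $x$ is admissible with norm $1$, and $\|\{1/k\}\|_{m(\nu,p)'}\ge\sum_k\varepsilon_p(k)/k$. In the merely quasiconcave case the rearranged partial sums of $\{\varepsilon_p(k)^p\}$ need not stay below $\nu(n)^p$, and I would instead replace $\nu^p$ by its least concave majorant $\widehat{\nu^p}$: quasiconcavity forces $\nu^p\le\widehat{\nu^p}\le2\nu^p$, so the nonincreasing increments $\widehat{\varepsilon}_p(k):=(\widehat{\nu^p}(k)-\widehat{\nu^p}(k-1))^{1/p}$ give an admissible test sequence (since $\sum_{j\le n}\widehat{\varepsilon}_p(j)^p=\widehat{\nu^p}(n)\le2\nu(n)^p$) whose associated series is comparable to $\sum_k\nu(k)k^{-(1+1/p)}$. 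Either way $\|\{1/k\}\|_{m(\nu,p)'}\gtrsim\sum_k\varepsilon_p(k)/k$, and combining with the equivalence $\sum_k\varepsilon_p(k)/k\approx\sum_k\nu(k)k^{-(1+1/p)}$ proved below closes \eqref{normm}.

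It remains to verify $\text{(ii)}\Leftrightarrow\text{(iii)}\Leftrightarrow\text{(iv)}\Leftrightarrow\text{(v)}$, which are exactly the series already compared in the proof of Theorem \ref{convergence} with $\theta$ set to $0$ and $n\to\infty$; the standing assumption $\nu(n)/n^{1/p}\to0$ together with $\nu(0)=0$ annihilates every boundary term. Concretely, Abel's transformation \eqref{Abel} gives $\sum_{k=1}^{N}(\nu(k)-\nu(k-1))k^{-1/p}=\sum_{k=1}^{N-1}\Delta(k^{-1/p})\nu(k)+\nu(N)N^{-1/p}$, and letting $N\to\infty$ identifies (iv) with (iii); the elementary two-sided bound $\Delta(k^{-1/p})\approx k^{-(1+1/p)}$ yields $\text{(iii)}\Leftrightarrow\text{(ii)}$; and the estimates \eqref{ccc} and \eqref{cdd}, which rest on Abel's transformation, Lemma \ref{Q}, and Lemma \ref{epsionprop}(iii), specialize at $\theta=0$, $n\to\infty$ to $\sum_k\varepsilon_p(k)/k\le\sum_k\nu(k)k^{-(1+1/p)}$ and $\sum_k\nu(k)k^{-(1+1/p)}\lesssim\sum_k\varepsilon_p(k)/k$, i.e. $\text{(ii)}\Leftrightarrow\text{(v)}$. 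These equivalences hold for every modulus of variation and are proven independently of \eqref{normm}, so there is no circularity; chaining them with \eqref{normm} and Berezhnoi's criterion establishes all five statements.

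The main obstacle is the lower bound in \eqref{normm}: the upper bound and the series juggling are routine, but correctly evaluating the dual Marcinkiewicz norm forces one to exhibit the extremal \emph{nonincreasing} admissible sequence, and the honest difficulty is that $\varepsilon_p$ is monotone only when $\nu^p$ is concave. Reducing general quasiconcave $\nu$ to the concave case through the least concave majorant (and checking $\widehat{\nu^p}\approx\nu^p$) is precisely where the technical care is required.
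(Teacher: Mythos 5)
Your proof is correct and follows the same skeleton as the paper's: Berezhnoi's criterion reduces (i) to finiteness of $\|\{1/k\}\|_{m(\nu,p)'}$, the two-sided estimate \eqref{normm} is the key lemma, and the chain (ii)$\Leftrightarrow$(iii)$\Leftrightarrow$(iv)$\Leftrightarrow$(v) comes from the same Abel-transformation manipulations that the paper runs (with general $\theta$) in the proof of Theorem \ref{convergence} and then specializes, exactly as you do. Two of your steps genuinely differ. For the upper bound in \eqref{normm} you reduce to nonincreasing test sequences via Hardy--Littlewood and use the pointwise bound $x_k^*\le\nu(k)k^{-1/p}$, whereas the paper applies Abel's transformation together with H\"older's inequality to $\sum_k x_k/k$; both work, and yours is somewhat more transparent. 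More substantively, for the lower bound the paper simply asserts $\|\{\varepsilon_p(k)\}\|_{m(\nu,p)}\le 1$, which is immediate if the Marcinkiewicz norm is read literally as written (no rearrangement, so the partial sums telescope to $\nu(n)^p$), but requires justification if the norm is computed through decreasing rearrangements --- as it must be for $m(\nu,p)$ to be a symmetric space and for Berezhnoi's theorems to apply --- since in Section \ref{new} only quasiconcavity of $\nu^p$ is in force and $\varepsilon_p$ need not be monotone. Your detour through the least concave majorant $\widehat{\nu^p}$ (with $\nu^p\le\widehat{\nu^p}\le 2\nu^p$, so that the increments form an admissible nonincreasing test sequence with comparable associated series) addresses exactly this point; it is a genuine and worthwhile tightening of the argument rather than a redundancy.
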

\begin{proof}
First observe that by similar arguments as in the proof of part (ii)$\Leftrightarrow$(iii) of Theorem \ref{convergence}, we have
\begin{equation*}
\sum_{k=1}^{n}\frac{\nu(k)-\nu(k-1)}{k^{-\frac{1}{p}}}=\sum\limits_{k=1}^n \Delta(k^{-\frac1{p}})\nu(k)+\frac{\nu(n)}{(n+1)^{\frac1{p}}}
\end{equation*}
and
\begin{equation*}
(1-2^{-\frac1{p}})\sum\limits_{k=1}^n\frac{\nu(k)}{k^{1+\frac1{p}}}\leq \sum_{k=1}^{n}\frac{\nu(k)-\nu(k-1)}{k^{-\frac{1}{p}}}-\frac{\nu(n)}{(n+1)^{\frac1{p}}}\leq p^{-1}\sum\limits_{k=1}^n\frac{\nu(k)}{k^{1+\frac1{p}}},
\end{equation*}
which imply (iii)$\Leftrightarrow$(iv) and (ii)$\Leftrightarrow$(iv), respectively.

Let $\|x\|_{m(\nu,p)}<1$, put $y_k:=\frac1{k}$ in \eqref{Abel}, and apply H\"{o}lder's inequality to obtain
\begin{align*}
\sum_{k=1}^n \frac{x_k}{k}=\sum_{k=1}^{n-1}\frac1{k(k+1)}\sum_{j=1}^k x_j+\frac1{n}\sum_{j=1}^n x_j\\[.1in]
&\hspace{-5.5cm}\leq\sum_{k=1}^{n-1}\frac1{k(k+1)}\Big(\sum_{j=1}^k (x_j^*)^p\Big)^{\frac1{p}}k^{1-\frac1{p}}+\frac1{n}\Big(\sum_{j=1}^n (x_j^*)^p\Big)^{\frac1{p}}n^{1-\frac1{p}}\\[.1in]
&\hspace{-5.5cm}\leq\sum_{k=1}^{n-1}\frac{\nu(k)}{k^{1+\frac1{p}}}+\frac{\nu(n)}{n^{\frac1{p}}}\leq \sum_{k=1}^\infty \frac{\nu(k)}{k^{1+\frac1{p}}},
\end{align*}
where the last inequality follows from the fact that
\begin{align*}
\frac{\nu(n)}{n^{\frac1{p}}}&=\lim_{m\rightarrow\infty}\nu(n)\Big(\frac1{n^{\frac1{p}}}-\frac1{m^{\frac1{p}}}\Big)\\[.1in]
&=\lim_{m\rightarrow\infty}\nu(n)\sum_{k=n}^m \Delta\Big(\frac1{k^{\frac1{p}}}\Big)\\[.1in]
&\leq \lim_{m\rightarrow\infty}\sum_{k=n}^m \Delta\Big(\frac1{k^{\frac1{p}}}\Big)\nu(k)\\[.1in]
&\leq \lim_{m\rightarrow\infty}\sum_{k=n}^m \frac{\nu(k)}{k^{1+\frac1{p}}}=\sum_{k=n}^\infty \frac{\nu(k)}{k^{1+\frac1{p}}}.
\end{align*}
Consequently,
\begin{equation*}
 \Big\|\Big\{\frac1{k}\Big\}\Big\|_{m(\nu,p)'}\leq \sum_{k=1}^\infty \frac{\nu(k)}{k^{1+\frac1{p}}}.   
\end{equation*}
Finally, it suffices to show that $\big\|\big\{\frac1{k}\big\}\big\|_{m(\nu,p)'}\gtrsim \sum\limits_{k=1}^\infty \frac{\nu(k)}{k^{1+\frac1{p}}}$ and (iv)$\Leftrightarrow$(v). Note that $\{\varepsilon_p(k)\}\in m(\nu,p)$ and $\|\{\varepsilon_p(k)\}\|_{m(\nu,p)}\leq 1$. Thus, $\big\|\big\{\frac1{k}\big\}\big\|_{m(\nu,p)'}\geq \sum\limits_{k=1}^\infty \frac{\varepsilon_p(k)}{k}$. On the other hand, using similar arguments resulting in \eqref{equi1}, \eqref{milad} and applying Lemma \ref{epsionprop}(iii), we obtain
\begin{align*}
\sum\limits_{k=1}^n \frac{\varepsilon_p(k)}{k}&\gtrsim \sum\limits_{k=1}^n\frac{\nu(k)-\nu(k-1)}{k^{\frac{1}{p}}}\\[.1in]
&\gtrsim\frac{\nu(n)}{(n+1)^{\frac1{p}}}+\sum\limits_{k=1}^n \frac{\nu(k)}{k^{1+\frac1{p}}}.
\end{align*}
This implies the validity of both \eqref{normm} and (i)$\Leftrightarrow$(v).
\end{proof}

Let $X$ be a symmetric sequence space. The {\it fundamental sequence} of $X$ is defined to be $\varphi_X(n):=\|\sum_{j=1}^n \{\delta_{ij}\}_{i=1}^\infty\|_X$, where $\delta_{ij}$ is the Kronecker delta. In order to obtain the order of magnitude of the Fourier coefficients in $V_p[\nu]$, we apply \cite[Theorem 14]{Berezhnoi2} which states that if $X$ is a symmetric sequence space with fundamental sequence $\varphi_X(n)$, then the Fourier coefficients $\hat{f}(n)$ of a function $f\in BV(X)$ satisfy the inequality
$$
|\hat{f}(n)|\lesssim \frac1{\varphi_X(n)}.
$$
By straightforward calculations, one observes that $\varphi_{m(\nu,p)}(n)=n^{\frac1{p}}/\nu(n)$. As a result, we have
\begin{prop}\label{coeff}
Let $\nu$ be a modulus of variation and $1\leq p<\infty$. If $f\in V_p[\nu]$ is $2\pi$-periodic, then 
$$
|\hat{f}(n)|\lesssim\frac{\nu(n)}{n^{\frac1{p}}}.
$$
\end{prop}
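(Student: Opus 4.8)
The plan is to recognize that $V_p[\nu]$ coincides with $BV(X)$ for $X=m(\nu,p)$, as recorded in the remark preceding the statement, and then to invoke the general coefficient estimate $|\hat f(n)|\lesssim 1/\varphi_X(n)$ from \cite[Theorem 14]{Berezhnoi2}. In this way the whole task collapses to a single computation: that of the fundamental sequence $\varphi_{m(\nu,p)}(n)$. Since the cited theorem does all the analytic work, the proof is essentially a norm calculation followed by a citation.

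First I would identify the element $\sum_{j=1}^n\{\delta_{ij}\}_{i=1}^\infty$ appearing in the definition of $\varphi_X$ as the indicator sequence $e_n$ whose first $n$ coordinates equal $1$ and whose remaining coordinates vanish. Feeding this into the definition of the Marcinkiewicz norm gives
$$
\varphi_{m(\nu,p)}(n)=\|e_n\|_{m(\nu,p)}=\sup_{1\le m<\infty}\frac{1}{\nu(m)}\Big(\sum_{j=1}^m (e_n)_j^p\Big)^{\frac1{p}}=\sup_{1\le m<\infty}\frac{(\min\{m,n\})^{\frac1{p}}}{\nu(m)},
$$
using that $\sum_{j=1}^m (e_n)_j^p=\min\{m,n\}$.

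Next I would evaluate this supremum by splitting at $m=n$. For $m\ge n$ the quotient equals $n^{\frac1{p}}/\nu(m)$, which is largest at $m=n$ because $\nu$ is nondecreasing. For $m\le n$ the quotient equals $m^{\frac1{p}}/\nu(m)$, and here I would invoke the standing hypothesis from Remark \ref{remark} that $\nu(k)/k^{\frac1{p}}$ is nonincreasing — equivalently $k^{\frac1{p}}/\nu(k)$ is nondecreasing — so that this portion of the supremum is again attained at $m=n$. Both branches therefore yield $n^{\frac1{p}}/\nu(n)$, whence $\varphi_{m(\nu,p)}(n)=n^{\frac1{p}}/\nu(n)$.

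Finally, substituting this fundamental sequence into the Berezhnoi estimate produces exactly $|\hat f(n)|\lesssim \nu(n)/n^{\frac1{p}}$, as claimed. I do not anticipate any genuine obstacle: the lone point demanding care is the evaluation of the supremum defining the fundamental sequence, which rests squarely on the monotonicity of $\nu(k)/k^{\frac1{p}}$; the remainder is a direct appeal to \cite[Theorem 14]{Berezhnoi2}.
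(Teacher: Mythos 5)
Your proposal is correct and follows exactly the paper's own route: identify $V_p[\nu]$ with $BV(m(\nu,p))$, compute $\varphi_{m(\nu,p)}(n)=n^{1/p}/\nu(n)$, and apply \cite[Theorem 14]{Berezhnoi2}. The only difference is that you spell out the supremum evaluation (correctly using the standing assumption that $\nu(k)/k^{1/p}$ is nonincreasing) where the paper merely says ``by straightforward calculations.''
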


\medskip

\section*{Acknowledgements} The authors are grateful to the anonymous referee for his/her careful reading of the paper and making several helpful comments which have significantly improved the quality of this paper. Thanks are also extended to Prof. E.I. Berezhnoi for a useful discussion. The second named author was supported by the Iran National Science Foundation [project number 97013049]. The third named author is supported by a grant from IPM. Part of this paper was written when he was visiting the Department of Mathematics of IASBS. He wishes to express his sincere thanks to the Department, A. Ghorbanalizadeh and M.M. Sadr for their hospitality during the visit.


\begin{thebibliography}{EEE}

\bibitem{Appell}
J. Appell, J. Bana\'{s}, N. Merentes, Bounded Variation and Around, De Gruyter Ser. Nonlinear Anal. Appl., vol. 17, Walter de Gruyter, Berlin, 2013.

\bibitem{Bar}
N.K. Bary, A Treatise on Trigonometric Series, vol. 1, Pergamon Press, 1964. 

\bibitem{BSh}
C. Bennett, R.C. Sharpley, Interpolation of Operators, Academic Press, 1988. 

\bibitem{Berezhnoi1}
E.I. Berezhno\u{\i}, Spaces of functions of generalized bounded
variation. I. Embedding theorems. Estimates for Lebesgue constants, Sib. Math. J. 40 (1999) 837--850.

\bibitem{Berezhnoi2}
E.I. Berezhno\u{\i}, Spaces of functions of generalized bounded variation. II: Questions of uniform convergence of Fourier series, Sib. Math. J. 42 (2001) 435--449.

\bibitem{Brudnyi1}
A. Brudnyi, Yu. Brudnyi, Multivariate bounded variation functions of Jordan- Wiener type, J. Approx. Theory 251 (2020) 105346. 

\bibitem{Brudnyi2}
A. Brudnyi and Yu. Brudnyi, On the Banach structure of multivariate $BV$ spaces, Diss. Math. 548 (2020) 1--52 . 

\bibitem{Cha1}
Z.A. Chanturiya, The modulus of variation of a function and its application in the theory of Fourier series, Sov. Math. Dokl. 15 (1974) 67--71.

\bibitem{Cha3}
Z.A. Chanturiya, Absolute convergence of Fourier series, Math. Notes 18 (1975) 695--700.

\bibitem{Cha2}
Z.A. Chanturiya, On uniform convergence of Fourier series, Math. USSR Sb. 29 (1976) 475--495.

\bibitem{Cha4}
Z.A. Chanturiya, On the absolute convergence of Fourier series of the classes $H^\omega\cap V[\nu]$, Pac. J. Math. 96 (1981) 37--61.

\bibitem{Chist}
V.V. Chistyakov, The optimal form of selection principles for functions of a real variable, J. Math. Anal. Appl. 310 (2005) 609--625.

\bibitem{Chist2}
V.V. Chistyakov, S.A. Chistyakova, The joint modulus of variation of metric space valued functions and pointwise selection principles, Stud. Math. 238 (2017) 37--57.

\bibitem{CobosKruglyak} F. Cobos, N. Kruglyak, Exact minimizers for the couple $(L^\infty,BV)$ and the one-dimensional analogue of the Rudin--Osher--Fatemi model, J. Approx. Theory 163 (2011) 481--490.

\bibitem{deVoreLorentz} R.A. DeVore, G.G. Lorentz, Constructive Approximation, Springer-Verlag, Berlin, 1993.

\bibitem{GNW}
C. Goffman, T. Nishiura, D. Waterman, Homeomorphisms in Analysis, Mathematical Surveys and Monographs, vol. 54, Amer. Math. Soc., Providence, RI, 1997.

\bibitem{G}
M.M. Goodarzi, Embedding Schramm spaces into Chanturiya classes, Banach J. Math. Anal. 15 (2021) 13.

\bibitem{GHM}
M.M. Goodarzi, M. Hormozi, N. Memi\'{c}, Relations between Schramm spaces and generalized Wiener classes, J. Math. Anal. Appl. 450 (2017) 829--838.

\bibitem{KM}
S.G. Kre\u{\i}n, Ju.I. Petunin, E.M. Semenov, Interpolation of Linear Operators, Translations of Mathematical Monographs, vol. 227, Amer. Math. Soc., 2002.

\bibitem{Lag}
R. Lagrange, Sur les oscillations d'ordre sup\'{e}rieur d'une fonction num\'{e}rique, Ann. Sci. \'{E}c. Norm. Sup\'{e}r. III S\'{e}r. 82 (1965) 101--130.

\bibitem{Lind} M. Lind, On fractional smoothness of functions related to $p$-variation, Math. Inequal. Appl. 16 (2013) 21--39.

\bibitem{Nik}
S.M. Nikol'skii, Fourier series of functions with a given modulus of continuity, Dokl. Akad. Nauk SSSR 52 (1946) 191--194.

\bibitem{Osk}
K.I. Oskolkov, The sharpness of the Lebesgue estimate for the approximation of functions with prescribed modulus of continuity by Fourier sums, Tr. Mat. Inst. Steklova 112 (1971) 337--345. (Russian)

\bibitem{Osk2}
K.I. Oskolkov, Generalized variation, the Banach indicatrix, and the uniform convergence of Fourier series, Math. Notes 12 (1972) 619--625.

\bibitem{Pisier}
G. Pisier, Martingales in Banach Spaces, vol. 155, Cambridge University Press, 2016.

\bibitem{Pych}
P. Pych-Taberska, Some properties of the B\'{e}zier–Kantorovich type operators, J. Approx. Theory 123 (2003) 256--269.

\bibitem{SW2}
M. Schramm,  Functions of $\Phi$-bounded variation and Riemann--Stieltjes integration, Trans. Am. Math. Soc. 287 (1985) 49--63.

\bibitem{Wt1}
D. Waterman, On  convergence of Fourier series of functions of bounded generalized variation, Stud. Math. 44 (1972) 107--117.

\bibitem{Wi}
N. Wiener, The quadratic variation of a function and its Fourier coefficients, J. Math. Phys. 3 (1924) 72--94.

\bibitem{Wu}
X. Wu, Embedding of classes of functions with bounded $\Phi$-variation into generalized Lipschitz spaces, Acta Math. Hung. 150 (2016) 247--257.

\bibitem{Zygmund}
A. Zygmund, Trigonometric Series, Cambridge University Press, Cambridge, 1959.

\end{thebibliography}
\end{document}